\begin{document}

\newcommand\restr[2]{{
  \left.\kern-\nulldelimiterspace 
  #1 
  \vphantom{\big|} 
  \right|_{#2} 
  }}

\makeatletter
\renewcommand{\@seccntformat}[1]{%
  \ifcsname prefix@#1\endcsname
    \csname prefix@#1\endcsname
  \else
    \csname the#1\endcsname\quad
  \fi}
\makeatother

\makeatletter
\newcommand{\colim@}[2]{%
  \vtop{\m@th\ialign{##\cr
    \hfil$#1\operator@font colim$\hfil\cr
    \noalign{\nointerlineskip\kern1.5\ex@}#2\cr
    \noalign{\nointerlineskip\kern-\ex@}\cr}}%
}
\newcommand{\colim}{%
  \mathop{\mathpalette\colim@{\rightarrowfill@\textstyle}}\nmlimits@
}
\makeatother

\newcommand\rightthreearrow{%
        \mathrel{\vcenter{\mathsurround0pt
                \ialign{##\crcr
                        \noalign{\nointerlineskip}$\rightarrow$\crcr
                        \noalign{\nointerlineskip}$\rightarrow$\crcr
                        \noalign{\nointerlineskip}$\rightarrow$\crcr
                }%
        }}%
}

\title{The Spectrum of Units of Algebraic $K$-theory}         
\author{Shachar Carmeli and Kiran Luecke}        
\date{\today}          
\maketitle

\begin{abstract}
It is well known that the $[0,1]$ and $[0,2]$ Postnikov truncations of the units of the topological $K$-theories $\glone \KO$ and $\glone \KU$, respectively, are split, and that the splitting is provided by the ($\Z/2$-graded) line bundles. In this paper we give a similar splitting for the $[0,1]$-truncation of the units of algebraic $K$-theory, considered as a sheaf on affine schemes. A crucial step is to produce the splitting for $\glone K(\Z)$. Along the way we also give a complete calculation of the connective spectrum of strict units of $K(\Z)$ and $K(\F_\ell)$ for a prime $\ell$. Finally, we show that the units of algebraic $K$-theory do not split as a {presheaf}. In fact we show they do not even split {pointwise}.
\end{abstract}

\tableofcontents

\section{Introduction and outline}
\subsection{Background and motivation}
For a commutative ring spectrum $R$, one can associate a \emph{spectrum of units} $\glone R$, which plays a role analogous to that of the group of units of a classical commutative ring. The homotopy groups of $\glone R$ and $R$ are closely related---we have
\[
\pi_0 \glone R \simeq \pi_0 R^\times \quad  \text{and} \quad \pi_i \glone R \simeq \pi_i R \text{ for }  i>0.
\]
However, the nature of the spectrum $\glone R$, which consist of not only the homotopy groups but also the $k$-invariants gluing them together, is still a far from understood in general. 

Algebraic $K$-theory is a vast source of commutative ring spectra. For every classical commutative ring $S$ there is an associated commutative ring spectrum $K(S)$, the $K$-theory spectrum of $S$. It is thus natural to consider the spectrum $\glone K(S)$. 

For real and complex \emph{topological} $K$-theory, the spectra of units are well-understood (cf. eg. \cite{blm}). For a spectrum $X$ denote by $X[a,b]$ (resp. $X[a,\infty)$) the $b$-th truncation of the $a$-th connective cover of $X$ (resp. the $a$-th connective cover of $X$).
 Then the spectrum $\glone \KU$ decomposes as the direct sum 
\[
\glone \KU \simeq \glone \KU[0,2] \oplus \glone \KU[3,\infty),
\]
and there is a $p$-completed isomorphism $\glone \KU[3,\infty) \simeq \KU[3,\infty)$. 
Similarly, for the real topological $K$-theory spectrum $\KO$ there is a splitting 
\[
\glone \KO \simeq \glone \KO[0,1] \oplus \glone \KO[2,\infty), 
\]
and a $p$-complete isomorphism 
$
\glone \KO [2,\infty) \simeq \KO [2,\infty).  
$
Thus, knowing the additive structure of these ring spectra and their spectra of units \emph{in a small range of degrees} gives all the information needed to determine their spectra of units as a whole, at least after completion at a prime.

\subsection{Main results}
The primary goal of this paper is to demonstrate that a similar phenomenon occurs also in the context of algebraic $K$-theory. Namely, we show that this is the case for the Zariski sheaf $\glone K$ that assigns to each commutative ring $S$ the connective spectrum $\glone K(S)$ (cf. \Cref{sectionsheafsplit}).

\begin{introthm} [\ref{sheafsplit_main}]\label{sheafsplitintro}
    The Zariski sheaf $\glone K$ is split at 1. That is, in the category of Zariski sheaves of connective spectra on affine schemes there is an isomorphism
    \[ \glone K\simeq \glone K[0,1]\oplus \glone K[2,\infty).
    \]
\end{introthm}
Because Postnikov truncation of sheaves involves pointwise truncation \emph{and} sheafification, this \emph{does not} imply that $\glone K(S)$ splits in a similar way for every ring $S$. In fact, we show that such a splitting pointwise splitting does not exist.

\begin{introthm} [\ref{pointwisecounterexample}]
    The sheaf $\glone K$ does \emph{not} split pointwise. More precisely, for the ring $S = \R[x,y]/(x^2 + y^2 -1)$ we have 
    \[
    \glone K(S) 
\not{\simeq}\glone K(S)[0,1] 
\oplus \glone K(S)[2,\infty).
\]
\end{introthm}

We prove \Cref{sheafsplitintro} by reducing the problem to the splitting of $\glone K(\Z)$, in which case we also obtain information about the two summands.

\begin{introthm}[\ref{glintsplittingpcomplete}]\label{intro-splitting_glone} 
We have 
\[
\glone K(\Z) \simeq \glone K(\Z)[0,1] \oplus \glone K(\Z)[2,\infty),
\]
where the first summand is isomorphic to the fiber of the map 
\[
\Z/2 \oto{\Sq^2} \Sigma^2 \Z/2
\]
and after completion at a prime $p$ there is an isomorphism of spectra 
\[
\glone K(\Z)_p[2,\infty) \simeq K(\Z)_p[2,\infty). 
\]
\end{introthm}


Using the splitting of the sheaf $\glone K$, we obtain similar splittings for individual rings $S$, under some assumptions on their low $K$-groups.  For a commutative ring $S$, let $\hPic(S)$ be the Picard group of the category of discrete $S$-modules.
\begin{introthm}[{{\ref{low_K_correct_splitting}}}]\label{intro-glone_general}
Let $S$ be a ring for which $\Z \oplus \hPic(S) \iso K_0(S)$ and $S^\times \iso K_1(S)$. Then 
\[
\glone K(S) \simeq \glone K(S)[0,1] \oplus \glone K(S)[2,\infty). 
\]
\end{introthm}


Closely related to the spectrum of units $\glone R$ is the connective spectrum of \emph{strict units} $\G_m R := \hom_{\Sp^{\cn}}(\Z,\glone R)$. In fact, we \emph{use} knowledge of the strict units to establish splittings of $\glone R$ in certain cases. We give a complete calculation of $\G_mK(S)$ for $S= \Z$ or a prime field $\F_\ell$. 

\begin{introthm}
[\ref{strictunitsKZ}, \ref{strictunitsKFl}]\label{intro-strict} There are isomorphisms 
    \[
    \G_mK(\Z)\simeq\widehat{\Z}\oplus \Z/2\oplus\Sigma(\Z/2),
    \]
\[\G_mK(\F_\ell)\simeq\F_\ell^\times\oplus\Sigma\F_\ell^\times.
    \]
\end{introthm}

\subsection{Methods}
Our method is to reduce the splitting of the Zariski sheaf $\glone K$ to the splitting of its global sections $\glone K(\Z)$. For this reduction we use the canonical map $\pic \to \glone K$ that sends an invertible module spectrum over $S$ to its $K$-theory class. To illustrate our method in a simpler case than $\Z$, we simultaneously treat the prime fields 
$\F_\ell$.  

Then, we reduce questions about $\glone K(\Z)$ and $\glone K(\F_\ell)$ to questions about their $T(1)$-localizations (at an implicit prime $p$). The main tool in this reduction is the Lichtenbaum-Quillen theorem for these rings, stating that their $p$-completed $K$-theory spectra diverge from the connective covers of their $T(1)$-localizations only in small degrees; for the rings in question the difference can be analyzed explicitly using knowledge of their low degree $K$-groups. 

Once we are in the $T(1)$-local setting, we have new tools to study units and strict units. Namely, the strict units are governed by the power operation $\theta$ from \cite{hodgkin}, the spectrum of units $\glone R$ is related to $R$ itself by the Rezk logarithm map $\log \colon \glone R \to R$ \cite{rezk} which is an isomorphism in large degrees, and these two operations are related by nontrivial formulas. 
We use these tools to access the homotopy type of $\glone R$ and construct the desired splittings of their Postnikov towers. 

One difficulty in passing from $R:= K(S)$ to $\LKone R$ is that the Postnikov tower of the latter \emph{does not split}. The reason is that $\pi_0\LKone R$, which in essentially all examples considered here is $ \Z_p$, has too many units. For this reason, we work with a variant of $\glone R$ which manually omits the extra units formed by $p$-completion or $T(1)$-localization (cf. \Cref{glintdef}).








\subsection{Acknowledgements}

We would like to thank John Rognes for many helpful conversations, especially for explaining the proof of \Cref{LQprop}.
S.C. would like to thank the Azrieli Foundation for their support through an Early Career Faculty Fellowship.




    
    

\section{Preliminaries for the splitting of $\glone R$}
In this section we discuss some general reduction steps towards splitting the Postnikov tower of $\glone R$. As mentioned above, it will be convenient to work with the following variant of $\glone$. 
\begin{defn}\label{glintdef}
    For any commutative ring spectrum $E$ define $\glone ^{\pm1}E$ to be the pullback
    $$\begin{tikzcd}
        \glint E \arrow[r]\arrow[d]&\glone E\arrow[d] \\
        \{\pm1\}\arrow[r]&\pi_0\glone E \\
    \end{tikzcd} $$
\end{defn}
Note that for the rings $S$ under consideration, $\glint K(S) \simeq \glone K(S)$, so we 
alternatively wish to split the first layer of the Postnikov tower of $\glint K(S)$.  
From now on we shall consider splittings of the Postnikov tower of $\glint R$ for various commutative ring spectra $R$.
More precisely, we consider the following property for $X= \glint R$.
\begin{defn}\label{def:split_at}
We say that a connective spectrum $X$ is \emph{split at $m$} if there is an isomorphism $X \simeq X[0,m] \oplus X[m+1,\infty)$.  
\end{defn}

Thus, our main result can be now reformulated as saying that for $R= K(\Z)$ or $R = K(\F_\ell)$, the spectrum $\glint R$ is split at $1$. 
\subsection{$p$-completion}

In this section, we related the splittings of $\glint R$ to those of $\glint R_p$ for various primes $p$.
First, we discuss the relationship between splittings of a spectrum $X$ and its $p$-completions in general.
\begin{lem} \label{maps_truncated_to_connected}
For a natural number $m$,
let $X$ be a spectrum concentrated in degrees $[0,m]$ and $Y$ be a spectrum concentrated in degrees $[m+2,\infty)$. Then, the map 
\[
[X,Y] \too \prod_p [X,Y_p]
\]
is injective.
\end{lem}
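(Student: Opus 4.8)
The plan is to reduce the injectivity statement to the vanishing of a single obstruction group, and then to kill that group for degree reasons. Since $\mathrm{Map}(X,-)$ preserves products, the map $[X,Y]\to\prod_p[X,Y_p]$ is identified with the map $[X,Y]\to[X,\prod_p Y_p]$ induced by the canonical map $c\colon Y\to\prod_p Y_p$. Setting $F=\mathrm{fib}(c)$, the fiber sequence $F\to Y\to\prod_p Y_p$ yields an exact sequence of abelian groups $[X,F]\to[X,Y]\to[X,\prod_p Y_p]$, so it suffices to prove $[X,F]=0$.

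Next I would pin down $F$: it is a \emph{rational} spectrum concentrated in degrees $\ge m+1$. The connectivity is immediate, since $Y$ and each $Y_p$ lie in degrees $\ge m+2$, hence so does $\prod_p Y_p$, and a fiber lowers connectivity by at most one. For rationality it suffices to check $F\wedge\mathbb{S}/p\simeq 0$ for every prime $p$. Smashing $c$ with the finite spectrum $\mathbb{S}/p$, the factors $Y_q\wedge\mathbb{S}/p$ vanish for $q\ne p$ (as $Y_q$ is $q$-complete and $q\ne p$), while the remaining component $Y/p\to Y_p/p$ is an equivalence because $Y\to Y_p$ is a mod-$p$ equivalence; hence $c\wedge\mathbb{S}/p$ is an equivalence and $F\wedge\mathbb{S}/p\simeq 0$. (Equivalently, this is the arithmetic fracture square, which identifies $F$ with $\mathrm{fib}(Y_\Q\to(\prod_p Y_p)_\Q)$.)

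Finally I would compute $[X,F]=0$. A rational spectrum splits as a sum of shifted Eilenberg--MacLane spectra, so $F\simeq\bigoplus_{n\ge m+1}\Sigma^n HV_n$ with each $V_n$ a $\Q$-vector space, and since $HV_n$ is an $H\Q$-module we have $\mathrm{Map}(X,\Sigma^n HV_n)\simeq\mathrm{Map}_{H\Q}(H\Q\wedge X,\Sigma^n HV_n)$. Here $H\Q\wedge X$ is concentrated in degrees $[0,m]$: because $H\Q$ is the rational sphere, $H\Q\wedge HA$ is the rationalization $H(A\otimes\Q)$ of $HA$, so applying $H\Q\wedge-$ to the finite Postnikov tower of $X$ exhibits $H\Q\wedge X$ as an iterated extension of the spectra $\Sigma^k H(\pi_k X\otimes\Q)$ for $0\le k\le m$. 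Writing $H\Q\wedge X\simeq\bigoplus_{0\le k\le m}\Sigma^k HW_k$ and using that $\Q$ is a field, each summand $\mathrm{Map}_{H\Q}(\Sigma^k HW_k,\Sigma^n HV_n)\simeq\Sigma^{n-k}H\mathrm{Hom}_\Q(W_k,V_n)$ sits in degree $n-k\ge 1$. Thus $\mathrm{Map}(X,F)$ is connected, so $[X,F]=\pi_0\mathrm{Map}(X,F)=0$ and the map $[X,Y]\to\prod_p[X,Y_p]$ is injective.

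The main obstacle is the middle step, controlling $F$. Attacking $[X,Y_p]$ head-on is unpleasant because $p$-completion interacts poorly with infinite products and smashing and introduces $\lim^1$-terms; the key realization is that the entire discrepancy between $Y$ and $\prod_p Y_p$ is rational. Once the obstruction group becomes a map out of $X$ into an $(m+1)$-connective rational spectrum, the degree count closes the argument, the crucial point being that rationalization preserves $m$-truncatedness even though integral homology need not.
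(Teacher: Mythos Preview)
Your proof is correct and follows essentially the same strategy as the paper: reduce to showing $[X,F]=0$ for $F$ the fiber of $Y\to\prod_p Y_p$, argue that $F$ is rational and $(m+1)$-connective, and conclude by the absence of maps between rational spectra in disjoint degree ranges. Your version is simply more explicit---you verify rationality directly by smashing with $\mathbb{S}/p$ (the paper invokes the arithmetic fracture square) and you spell out the final vanishing via Eilenberg--MacLane decompositions, whereas the paper just asserts the general fact about rational spectra.
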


\begin{proof}
Let $F$ be the cofiber of the map $Y\to \prod_p Y_p$.
By the long exact sequence associated with the fiber sequence 
\[
\Map(X,F) \too  \Map(X,Y) \too \prod_p \Map(X,Y_p) 
\]
it would suffice to show that $\pi_0 \Map(X,F) = 0$. 
By the arithmetic fracture square of $Y$, we see that $F$ is a rational spectrum, and since both $Y$ and $\prod_pY_p$ are concentrated in degrees $[m+2,\infty)$, $F$ is concentrated in degrees $[m+1,\infty)$.
Since there are no non-trivial maps between rational spectra concentrated in disjoint homotopical degrees, we deduce that 
$
[X,F] = 0$
and the result follows.
\end{proof}

By a simple instance of obstruction theory, this allows us to relate the splittability of $X$ and of its $p$-completions. 
\begin{cor}\label{p_complete_splitting}
Let $X$ be a connective spectrum with finitely generated homotopy groups. Then $X$ splits at $m$ if and only if $X_p$ splits at $m$ for all primes $p$. 
\end{cor}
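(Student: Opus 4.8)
The plan is to reformulate ``split at $m$'' as the vanishing of a single obstruction class and then apply \Cref{maps_truncated_to_connected} to that class. For any connective spectrum $Z$, the Postnikov fiber sequence $Z[m+1,\infty)\to Z\to Z[0,m]$ extends to a cofiber sequence $Z\to Z[0,m]\xrightarrow{\,k_Z\,}\Sigma Z[m+1,\infty)$, and the first step is the elementary observation that $Z$ is split at $m$ if and only if $k_Z$ is nullhomotopic. If $k_Z\simeq 0$ the fiber sequence splits; conversely, any splitting $Z\simeq Z[0,m]\oplus Z[m+1,\infty)$ yields a section of the truncation $Z\to Z[0,m]$, since the composite $Z[0,m]\hookrightarrow Z\to Z[0,m]$ is an isomorphism on homotopy groups and hence an equivalence, and then $k_Z$ is null because it is the map immediately following $Z\to Z[0,m]$ in a cofiber sequence.

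The second step is to record that, because $X$ has finitely generated homotopy groups, derived $p$-completion is computed degreewise by $\pi_\ast(-)\otimes\Z_p$, so it commutes with both truncation and connective cover: $(X[0,m])_p\simeq X_p[0,m]$ and $(X[m+1,\infty))_p\simeq X_p[m+1,\infty)$, compatibly with the completion maps and with the Postnikov fiber sequences. The forward implication is then immediate: $p$-completing a splitting $X\simeq X[0,m]\oplus X[m+1,\infty)$ gives $X_p\simeq X_p[0,m]\oplus X_p[m+1,\infty)$.

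For the converse, assume $X_p$ is split at $m$, i.e. $k_{X_p}\simeq 0$, for every prime $p$. Apply \Cref{maps_truncated_to_connected} with $X[0,m]$ playing the role of its $X$ (concentrated in $[0,m]$) and $\Sigma X[m+1,\infty)$ playing the role of $Y$ (concentrated in $[m+2,\infty)$): the natural map
\[
[X[0,m],\,\Sigma X[m+1,\infty)]\ \longrightarrow\ \prod_p\,[X[0,m],\,(\Sigma X[m+1,\infty))_p]
\]
is injective. Under the identification $(\Sigma X[m+1,\infty))_p\simeq\Sigma X_p[m+1,\infty)$ from the previous step, naturality of the Postnikov connecting map shows that the image of $k_X$ in the $p$-th factor is $k_{X_p}$ precomposed with the completion map $X[0,m]\to X_p[0,m]$, which vanishes by hypothesis. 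Hence $k_X$ lies in the kernel of an injection, so $k_X\simeq 0$ and $X$ is split at $m$.

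The one point that needs care is the second step --- that truncation commutes with $p$-completion --- where finite generation of the homotopy groups is exactly what prevents a derived-completion term from spreading a single group across adjacent degrees; everything else is formal given \Cref{maps_truncated_to_connected}, which is the ``simple instance of obstruction theory'' alluded to above.
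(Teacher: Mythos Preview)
Your proof is correct and follows essentially the same approach as the paper: both reformulate the splitting as the vanishing of the Postnikov connecting map $k_X\colon X[0,m]\to \Sigma X[m+1,\infty)$, use finite generation to identify $(X[0,m])_p\simeq X_p[0,m]$ and $(\Sigma X[m+1,\infty))_p\simeq \Sigma X_p[m+1,\infty)$, and then invoke \Cref{maps_truncated_to_connected} together with the naturality square relating $k_X$ to $k_{X_p}$. The paper makes the commutative square explicit while you phrase it as ``$k_{X_p}$ precomposed with the completion map,'' but the content is identical.
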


\begin{proof}
First, if $X$ splits then $X\simeq X[0,m]\oplus X[m+1,\infty)$. Taking $p$ completion commutes with the truncations of $X$ since $X$ has finitely generated homotopy. Hence, by $p$-completing the splitting above we obtain a splitting 
\[
X_p \simeq X_p[0,m]\oplus X_p[m+1,\infty)
\] 
as desired. 

Conversely, note that 
$X$ splits if and only if the edge map $\alpha\colon X[0,m] \to \Sigma X[m+1,\infty)$ of the cofiber sequence 
\[
X[m+1,\infty) \too X \too X[0,m]
\]
is null homotopic. By \Cref{maps_truncated_to_connected}, this is equivalent to the vanishing of each of the compositions 
\[
X[0,m] \xrightarrow{\alpha} \Sigma X[m+1,\infty) \too \Sigma X[m+1,\infty)_p \simeq \Sigma X_p[m+1,\infty).   
\] 
The above composition fits as the diagonal dashed arrow in the commutative square 
\[
\xymatrix{
X[0,m] \ar^\alpha [r] \ar[d] \ar@{..>}[rd] & \Sigma X[m+1,\infty) \ar[d] \\
X_p[0,m] \ar^{\alpha_p}[r] & \Sigma X_p[m+1,\infty)
}
\]
in which the vertical maps are induced from the $p$-completion $X\to X_p$. By our assumption that $X_p$ is split at $m$, the bottom horizontal map labeled $\alpha_p$ is $0$, and hence also the diagonal dashed arrow, yielding the result.  
\end{proof}

We turn to discuss splittings of $\glint R$ for a commutative ring spectrum $R$. 

\begin{lem}\label{glint_completion}
Let $R$ be a commutative ring spectrum with finitely generated homotopy groups. Then, for every prime $p$ the $p$-completion map $R\to R_p$ induces an isomorphism 
\[
(\glint R)_p \simeq (\glint R_p)_p.  
\]
\end{lem}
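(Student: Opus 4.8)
The plan is to unwind \Cref{glintdef} into a fiber sequence whose outer terms are transparent under $p$-completion, and then to reduce everything to the elementary fact that $p$-completion of a connective spectrum with finitely generated homotopy groups is computed groupwise. In the pullback square defining $\glint R$, the map $\glone R \to \pi_0 \glone R$ is the Postnikov truncation, whose fiber is the connective cover $\glone R[1,\infty)$; hence the pullback produces a fiber sequence
\[
\glone R[1,\infty) \too \glint R \too \{\pm 1\},
\]
natural in $R$. Applying this to $R_p$ as well and using naturality in the completion map $R \to R_p$ gives a map of fiber sequences whose map on third terms, induced by $R\to R_p$, is an equivalence $\{\pm1\}=\{\pm1\}$. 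Since $(-)^\wedge_p \simeq \lim_n\bigl((-)/p^n\bigr)$ is exact (a sequential limit of exact functors), it preserves fiber sequences; $p$-completing, we get a map of fiber sequences whose third terms still agree, so by the long exact sequence of homotopy groups it suffices to show that $\bigl(\glone R[1,\infty)\bigr)_p \to \bigl(\glone R_p[1,\infty)\bigr)_p$ is an equivalence.

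For this I would argue on homotopy groups. The spectrum $\glone R[1,\infty)$ has $\pi_i = \pi_i R$ for $i\ge 1$ and vanishing $\pi_0$, so its homotopy groups are finitely generated; a standard Milnor-sequence argument — the towers $\{\pi_*(X/p^n)\}$ consist of finite groups, hence are Mittag--Leffler, and $\lim_n \pi_*(X)/p^n = \pi_*(X)^\wedge_p$ while the complementary torsion tower has vanishing inverse limit — then gives $\pi_i\bigl(\glone R[1,\infty)\bigr)_p \cong \pi_i(R)\otimes_\Z \Z_p$. The same applies to $R_p$, whose homotopy groups $\pi_i R_p \cong \pi_i(R)\otimes \Z_p$ are finitely generated $\Z_p$-modules, hence already derived $p$-complete. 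Tracing through the identifications, the comparison map is, on $\pi_i$, the canonical map $\pi_i(R)\otimes\Z_p \to \bigl(\pi_i(R)\otimes\Z_p\bigr)\otimes\Z_p$, which is an isomorphism by idempotence of $p$-completion on finitely generated abelian groups. Hence it is an equivalence, as required.

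The main thing to watch — and the reason the statement is not purely formal — is that one cannot simply apply $(-)^\wedge_p$ to the defining pullback square and compare corner by corner. Although $p$-completion does preserve that pullback, the corner $\pi_0\glone R = (\pi_0 R)^\times$ is badly behaved under $R \mapsto R_p$: the group $(\pi_0 R_p)^\times$ contains a pro-$p$ subgroup $1 + p\,\pi_0 R_p$ with no counterpart in $\bigl((\pi_0 R)^\times\bigr)^\wedge_p$, so the comparison already fails on $\pi_0$. This is exactly the discrepancy that motivates working with $\glint$ rather than $\glone$ (cf. the discussion preceding \Cref{glintdef}): restricting to the finite group $\{\pm1\}$ of units forces all homotopy groups to be finitely generated, which is precisely what makes $p$-completion groupwise. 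The fiber-sequence reformulation above simply isolates that finiteness; alternatively, one can compute $\pi_*$ of both sides directly from the finitely-generated-homotopy hypothesis and verify degreewise that the comparison map is an isomorphism.
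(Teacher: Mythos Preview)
Your proof is correct and follows essentially the same approach as the paper: both reduce to checking that the comparison map is an isomorphism on homotopy groups, using that $\pi_0\glint = \{\pm 1\}$ on both sides while $\pi_i\glint = \pi_i R$ for $i\ge 1$, together with the fact that $p$-completion is computed groupwise for spectra with finitely generated (or already $p$-complete) homotopy groups. Your fiber-sequence packaging of the $i=0$ versus $i\ge 1$ split is a mild reorganization of the paper's direct degreewise argument, not a genuinely different route.
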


\begin{proof}
Note that since $\glint R$ has finitely generated homotopy groups,
$\pi_i \glint R \to \pi_i (\glint R)_p$ is a (classical) $p$-completion. In fact, since $\glint R_p$ has finitely generated $\pi_0$ and $p$-complete connected cover, the same hold for it.
It would therefore suffices to show that the map $\pi_i\glint R \to \pi_i(\glint R_p)$ induces an isomorphism on (classical) $p$-completions. For $i=0$ this map is already an isomorphism by definition (both groups are $\{\pm 1\}$), and for $i>0$ this map can be identified with the map $\pi_i R \to \pi_i R_p$, which is a $p$-completion since $R$ has finitely generated homotopy groups.    
\end{proof}

We are ready to reduce the splitting of $\glint R$ to that of the spectra $\glint R_p$. 
\begin{cor}\label{arithfracreduction}
Let $R$ be a commutative ring spectrum with finitely genetated homotopy groups and let $m \ge 1$. If $\glint R_p$ splits at $m$ for every prime $p$, the $\glint R$ splits at $m$.   
\end{cor}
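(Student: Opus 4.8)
The plan is to assemble the three preceding results. \Cref{glint_completion} identifies $(\glint R)_p$ with $(\glint R_p)_p$, and \Cref{p_complete_splitting} reduces the splittability at $m$ of a connective spectrum with finitely generated homotopy groups to that of its $p$-completions; so I would simply apply \Cref{p_complete_splitting} to $X := \glint R$.

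To do so I first check the hypothesis that $\glint R$ has finitely generated homotopy groups. This is immediate from \Cref{glintdef}: the defining pullback square shows $\pi_0 \glint R = \{\pm 1\}$ is finite, while for $i \ge 1$ the map $\glint R \to \glone R$ is an isomorphism on $\pi_i$ and $\pi_i \glone R \cong \pi_i R$ is finitely generated by the assumption on $R$.

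By \Cref{p_complete_splitting} it then suffices to show that $(\glint R)_p$ splits at $m$ for every prime $p$. By \Cref{glint_completion} we have $(\glint R)_p \simeq (\glint R_p)_p$, so it is enough to prove that $(\glint R_p)_p$ splits at $m$. By hypothesis $\glint R_p$ splits at $m$, i.e.
\[
\glint R_p \simeq \glint R_p[0,m] \oplus \glint R_p[m+1,\infty),
\]
and I would conclude by $p$-completing this equivalence. The point that needs a word is that $p$-completion commutes with the Postnikov truncations appearing here: the homotopy groups of $\glint R_p$ are $\{\pm 1\}$ in degree $0$ and the finitely generated $\Z_p$-modules $\pi_i R_p$ in positive degrees, all of which are derived $p$-complete, so the truncation layers $\glint R_p[0,m]$ and $\glint R_p[m+1,\infty)$ are already $p$-complete. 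Hence $(\glint R_p)_p \simeq (\glint R_p)_p[0,m] \oplus (\glint R_p)_p[m+1,\infty)$, which is the desired splitting.

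I do not expect a real obstacle: the substance of the statement is packaged into \Cref{glint_completion} and \Cref{p_complete_splitting}, and what remains is bookkeeping. The one mildly delicate point is the last one --- that truncation and $p$-completion commute for $\glint R_p$ despite its homotopy groups not being finitely generated \emph{abelian} groups --- which is handled by observing that they are finitely generated, hence derived $p$-complete, over $\Z_p$. (Note that the hypothesis $m \ge 1$ plays no role in this reduction; it enters only later, when one verifies the splitting of $\glint R_p$ itself.)
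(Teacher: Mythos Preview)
Your approach is correct and essentially identical to the paper's: reduce via \Cref{p_complete_splitting} to the $p$-completions, identify $(\glint R)_p \simeq (\glint R_p)_p$ via \Cref{glint_completion}, and then pass the assumed splitting of $\glint R_p$ through $p$-completion. The paper phrases the last step as an appeal to the ``only if'' direction of \Cref{p_complete_splitting}; you instead argue it directly, which is arguably more honest since $\glint R_p$ does not literally have finitely generated homotopy groups over~$\Z$.

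One small correction: your claim that $\{\pm 1\}$ is derived $p$-complete is false for odd $p$ (indeed $\Hom(\Z[1/p],\Z/2)\cong\Z/2$), so $\glint R_p[0,m]$ is not literally $p$-complete in that case. This does not affect the conclusion: for odd $p$ the $\Z/2$ in degree~$0$ simply $p$-completes to zero, and since $m\ge 1$ the summand $\glint R_p[m+1,\infty)$ has only $\Z_p$-finitely-generated homotopy and is genuinely $p$-complete, so $(\glint R_p)_p$ still decomposes as its $[0,m]$ and $[m+1,\infty)$ truncations.
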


\begin{proof}
By the ``if'' direction of \Cref{p_complete_splitting}, it would suffice to show that $(\glint R)_p$ splits at $m$ for every prime $p$. By \Cref{glint_completion}, this spectrum is isomorphic to $(\glint R_p)_p$. By the ''only if' direction of \Cref{p_complete_splitting}, this follows from the assumption that $\glint R_p$ splits at $m$.
\end{proof}

\subsection{$T(1)$-localization}
In this section we use Lichtenbaum-Quillen type results to reduce the calculation of $\glone K(S)_p$ to that of $\glone  \LKone K(S)$ for the rings $S$ under consideration.
As usual, we denote by $\LKone X$ the localization of the spectrum $X$ at the Bousfield class of $T(1)=\S/p^k[v_1^{-1}]$ and leave the prime $p$ implicit\footnote{Note that this is the same as the $K(1)$-localization, and we use the $T(1)$-localization mostly for notation reasons.}. The canonical map $X\to\LKone X$ factors through the $p$-completion; in fact $\LKone X\to \LKone X_p$ is an equivalence. The localization functor is symmetric monoidal, so if $R$ is a commutative ring spectrum, the localization map is one of commutative ring spectra. 
We shall need the following fact, which is a special case of the Lichtenbaum-Quillen conjecture.
\begin{prop}\label{LQprop} (\cite{voevod}, \cite{rognesweibel}, \cite{dwyermitch}, \cite{dwyerfried}, \cite{thomason},  \cite{quillen}, \cite{devhop})
For $S = \Z[1/p]$ or $\F_\ell$ with $\ell \ne p$, the map 
\[
K(S)_p \too \LKone K(S)
\]
induces an isomorphism 
\[
K(S)_p \simeq \LKone K(S)[0,\infty)
\]
\end{prop}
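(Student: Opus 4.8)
This proposition is really a packaging of known deep results, so the "proof" is a matter of assembling citations and checking a low-degree discrepancy. I would proceed as follows. First, the $T(1)$-localization $\LKone K(S)$ agrees with the $K(1)$-localization, and for $S = \Z[1/p]$ or $S = \F_\ell$ with $\ell \neq p$ the étale comparison map identifies $\LKone K(S)$ with the $K(1)$-local (equivalently, $p$-completed étale) $K$-theory of $S$; this is the input provided by Thomason's descent theorem together with the Bloch--Kato/norm residue theorem proved by Voevodsky and Rost. Second, the Quillen--Lichtenbaum conjecture — now a theorem, via the work of Voevodsky, Rost, Rognes--Weibel, and the earlier computations of Quillen for finite fields — asserts that the comparison map $K(S)_p \to K^{\mathrm{\acute et}}(S)_p$ is an isomorphism on homotopy groups in degrees above a small explicit bound (controlled by the mod-$p$ étale cohomological dimension of $S$), and is injective at the bottom relevant degree. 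So the map $K(S)_p \to \LKone K(S)$ is an equivalence on connective covers once one checks it is already an isomorphism on $\pi_i$ for all $i \geq 0$, not merely eventually.

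The substance of the argument, then, is the claim that \emph{no} low-degree discrepancy survives: i.e. that $K(S)_p \to \LKone K(S)[0,\infty)$ is an isomorphism on $\pi_0$ and $\pi_1$ as well. For $S = \F_\ell$ this is a direct comparison: Quillen computed $K_*(\F_\ell)$ completely, $\LKone K(\F_\ell)$ is the fiber of $\psi^q - 1$ on $p$-complete $\KU$ (with $q = \ell$), and one reads off that the connective cover has the right $\pi_0 = \Z_p$ and $\pi_1 = \Z_p^\times$-torsion matching $K_1(\F_\ell)_p = (\F_\ell^\times)_p$. For $S = \Z[1/p]$ one uses the known values of $K_0(\Z[1/p]) = \Z$, $K_1(\Z[1/p]) = \Z[1/p]^\times$, together with the identification of $\pi_0$ and $\pi_1$ of $\LKone K(\Z[1/p])$ via the localization sequence relating it to $\LKone K(\Z)$ and $\LKone K(\F_p)$ (or directly via étale cohomology of $\Z[1/p]$). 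Here one must be slightly careful at the prime $2$, where the étale cohomological dimension of $\Z[1/2]$ is larger and the two-dimensionality issues make the bottom of the tower more delicate — this is the place I would expect to spend the most effort.

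The main obstacle is therefore not any single hard theorem but the bookkeeping at the bottom of the Postnikov tower, especially at $p = 2$: verifying that the Quillen--Lichtenbaum isomorphism range actually extends down to degree $0$ for these specific rings, rather than leaving a discrepancy in degrees $0$ or $1$ that would have to be corrected by hand (as indeed happens for $\Z$ itself, which is precisely why the hypothesis here excludes $p \mid$ the characteristic and inverts $p$). I would handle this by invoking the explicit low-degree $K$-groups of $\Z[1/p]$ and $\F_\ell$ from the literature and matching them against the explicit homotopy of the $K(1)$-local $K$-theory spectra, which are small enough (built from $\KU_p$ and its Adams operations) to compute by hand; Rognes's conversations, acknowledged above, presumably supply exactly this verification.
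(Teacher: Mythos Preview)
Your proposal is correct and follows essentially the same strategy as the paper: invoke the Quillen--Lichtenbaum theorem (via Voevodsky, Thomason, Dwyer--Mitchell) to handle degrees above a small bound, then verify the remaining low-degree isomorphisms by direct computation, with the $\F_\ell$ case reducing to Quillen's calculation and the $\Z[1/p]$ case handled via \'etale cohomology. The paper differs only in packaging: it cites Dwyer--Mitchell specifically to pin down the range of the Quillen--Lichtenbaum equivalence (so that only $\pi_0$ remains to check), and offers as an alternative at $p=2$ the explicit fiber square expressing $K(\Z[1/2])_2$ in terms of $ko_2$, $ku_2$, and $ku_2^{h\psi^3}$.
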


\begin{proof}
    For the case of $K(\F_\ell)$ and $p\neq \ell$ this essentially follows from Quillen's calculation in \cite{quillen}, but see Theorem 5 of \cite{devhop} and the discussion preceding it. 

    Now we turn to $K(\Z[1/p])$. We learned this argument from John Rognes. Dwyer and Mitchell (\cite{dwyermitch} 1.4, cf. also \cite{dwyerfried} 8.8) show, using the work of Thomason \cite{thomason}, that the Lichtenbaum-Quillen conjecture implies that $K(\Z[1/p])\rightarrow \LKone K(\Z[1/p])$ is an equivalence on 1-connected covers. Since Voevodsky proved the Lichtenbaum-Quillen conjecture in \cite{voevod}, it remains to consider $\pi_0$. This follows from a calculation in etale cohomology (cf. \cite{rognesweibel} Section 2). 
    
    Note that for $p=2$ the desired result can also be deduced from the fiber square
    $$\begin{tikzcd}
        K(\Z[1/2])_2\arrow[r]\arrow[d]&\arrow[d]ku_2^{h\psi^3} \\
        ko_2\arrow[r]&ku_2 \\
    \end{tikzcd}.$$
\end{proof}

From this, we can relate the spectrum of units of $K(S)$ and its $T(1)$-localization.

\begin{lem}\label{K1reduction}
For $R= K(\F_\ell)$ or $R= K(\Z)$, we have the following:
\begin{enumerate}
\item For a prime $\ell\ne p$: 
\[
\glone K(\F_\ell)_p \simeq \glone L_{T(1)} K(\F_\ell).
\]
\item There is a cofiber sequence
\[
\Z_p\rightarrow \glone  K(\Z)_p\rightarrow \glone\LKone K(\Z).
\]
\end{enumerate}

\end{lem}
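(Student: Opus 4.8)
The plan is to deduce both statements from Proposition \ref{LQprop} together with the general relationship $(\glint R)_p \simeq (\glint R_p)_p$ of Lemma \ref{glint_completion} and the observation that $\LKone K(S)$ only "sees" $K(S)_p$. Fix a prime $p$. First I would reduce to the $p$-complete setting: since $K(\F_\ell)$ and $K(\Z)$ have finitely generated homotopy groups, Lemma \ref{glint_completion} gives $(\glone K(S))_p \simeq \glint K(S)_p \simeq (\glint K(S)_p)_p$, and under the running hypothesis that $\glint$ agrees with $\glone$ on these rings it suffices to analyze $\glone$ of the $p$-completed $K$-theory spectrum. Next I would compare $K(S)_p$ with (the connective cover of) $\LKone K(S)$. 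For $S = \F_\ell$ with $\ell\neq p$, Proposition \ref{LQprop} says the map $K(\F_\ell)_p \to \LKone K(\F_\ell)$ identifies the source with the connective cover $\LKone K(\F_\ell)[0,\infty)$ of the target; since $\glone$ of a connective ring spectrum depends only on that ring spectrum and taking connective covers does not change $\pi_0$ or the positive homotopy groups in the relevant sense, $\glone K(\F_\ell)_p \simeq \glone \LKone K(\F_\ell)$. This gives (1).

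For (2) the situation is slightly different because $K(\Z)$ and $K(\Z[1/p])$ are not the same. Here I would use the localization cofiber sequence (dévissage / localization in $K$-theory) relating $K(\Z)$, $K(\Z[1/p])$ and $K(\F_p)$, namely $K(\F_p) \to K(\Z) \to K(\Z[1/p])$, after $p$-completion. Since $K(\F_p)_p \simeq \S_p$ (Quillen: $K(\F_p)$ has trivial mod-$p$ $K$-groups in positive degrees and $K_0 = \Z$, so its $p$-completion is the $p$-complete sphere concentrated in degree $0$ — more precisely $(K(\F_p))_p \simeq \Z_p$), this cofiber sequence becomes $\Z_p \to K(\Z)_p \to K(\Z[1/p])_p$. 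Now apply Proposition \ref{LQprop} to $\Z[1/p]$ to identify $K(\Z[1/p])_p$ with the connective cover of $\LKone K(\Z[1/p]) \simeq \LKone K(\Z)$ (the $T(1)$-localization inverts $p$ so it does not distinguish $\Z$ from $\Z[1/p]$). The remaining task is to translate this additive cofiber sequence of spectra into the desired cofiber sequence of spectra of units. Since $\glone$ is not exact, I would instead argue directly on homotopy groups: on $\pi_0$, the units of $K(\Z)_p$ are $\{\pm1\}$ (so after restricting to $\glint$, $\pi_0$ is $\Z/2$ on the nose, matching $\pi_0 \glone \LKone K(\Z)$); and for $i>0$, $\pi_i \glone K(\Z)_p = \pi_i K(\Z)_p$ while $\pi_i \glone \LKone K(\Z) = \pi_i \LKone K(\Z)$, so the fiber of $\glone K(\Z)_p \to \glone \LKone K(\Z)$ has homotopy concentrated in the range where $K(\Z)_p \to \LKone K(\Z)$ fails to be an isomorphism — and by Proposition \ref{LQprop} (applied via $\Z[1/p]$ and the $K(\F_p)$-localization sequence) that fiber is exactly $\Z_p$ in degree $0$. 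Identifying the connecting map is then a matter of chasing the cofiber sequence $\Z_p \to K(\Z)_p \to K(\Z[1/p])_p$ through the construction of $\glone$.

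The main obstacle I anticipate is the last step: $\glone$ does not preserve (co)fiber sequences, so one cannot simply apply it to $\Z_p \to K(\Z)_p \to K(\Z[1/p])_p$. The careful point is to check that the map $K(\Z)_p \to K(\Z[1/p])_p$ is a connective cover map onto $\LKone K(\Z)$ fiberwise — i.e. that its fiber $\Z_p$ sits in degree $0$ and interacts with $\glint$ by adding precisely a $\Z_p$ in degree $0$ and nothing in positive degrees — and to verify that passing from $\glone$ to $\glint$ correctly kills the "extra" $\pi_0$-units introduced by $p$-completion, so that what is left over really is the cofiber sequence as stated. I would handle this by working one homotopy group at a time using the long exact sequences, invoking $\pi_i \glone E = \pi_i E$ for $i > 0$ and the explicit description of $\pi_0$, rather than by any exactness property of $\glone$ itself.
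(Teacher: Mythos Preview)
Your approach is essentially the paper's: part (1) follows from \Cref{LQprop} together with the fact that $\glone$ depends only on the connective cover, and part (2) from the localization sequence for $\Z \to \Z[1/p]$, Quillen's computation $K(\F_p)_p \simeq \Z_p$, \Cref{LQprop} for $\Z[1/p]$, and a homotopy-group chase showing that the fiber of $\glone K(\Z)_p \to \glone \LKone K(\Z)$ is $\Z_p$ concentrated in degree $0$. One correction worth making: $\pi_0 \glone K(\Z)_p = \Z_p^\times$, not $\{\pm 1\}$ (you are conflating $\glone$ with $\glint$ here), though this does not affect the argument since the induced map $\Z_p^\times \to \Z_p^\times$ is still an isomorphism and the $\Z_p$ in the fiber arises entirely from the $\pi_1$-cokernel; for the same reason the detour through \Cref{glint_completion} is unnecessary.
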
 
\begin{proof}
For $(1)$, this follows from \Cref{LQprop} and the fact that $\glone$ depends only on the connective cover of a commutative ring spectrum. 

For $(2)$, again by \Cref{LQprop} we can identify the map 
$K(\Z)_p \too \LKone K(\Z)$ with the map 
\[
K(\Z)_p \too K(\Z[1/p])_p. 
\]
Taking $p$-completion from the localization sequence 
\[
K(\Z)\rightarrow K(\Z[1/p]) \too \Sigma K(\F_p)
\]

and using that by Quillen's computation $K(\F_p)_p \simeq \Z_p$ we get the map 
$
K(\Z)_p \to K(\Z[1/p])_p
$
induces an isomorphism on $\pi_i$ for $i\ne 1$ and an injection on $\pi_1$ with cokernel $\Z_p$. Hence, the same holds for the map $\glone K(\Z)_p \to \glone K(\Z[1/p])_p$ and the result follows. 
\end{proof}

\subsection{The homotopy type of $\glint R[0,1]$}\label{glintsection}
We shall now determine the homotopy type of the $1$-truncation of $\glone R$ for the ($p$-complete) ring spectra $R$ under consideration. This calculation will be useful later in the construction of sections for the map $\glone R \to \glone R[0,1]$.

\begin{rem} \label{rem:eta_1_truncated}
Recall that a connective, 1-truncated spectrum $X$ is determined by the data of $\pi_0 X, \pi_1 X$ and the multiplication by the Hopf element $\eta\in \pi_1\Sph$, i.e. the map 
\[
\eta\cdot \colon \pi_0 X / 2 \too \pi_1X
\]
More precisely, one recovers $X$ from this data as the fiber of the composition 
\[
\pi_0 X \onto \pi_0 X /2 \oto{\Sq^2} \Sigma^2 \pi_0X/2 \oto{\eta} \Sigma^2 \pi_1X. 
\]
In particular, $X$ splits as a sum $X\simeq \pi_0 X \oplus \Sigma \pi_1 X$ if and only if $\eta$ acts by $0$ on $\pi_0 X$.  
\end{rem}

Returning to the determination of $\glint R[0,1]$, first we note that the case of an odd prime $p$ is easy. 
\begin{lem}\label{gltrunc_odd_p}
Let $R$ be a $p$-local commutative ring spectrum for an odd prime $p$. Then 
\[
\glint R[0,1] \simeq \Z/2 \oplus \Sigma \pi_1 R.   
\]
\end{lem}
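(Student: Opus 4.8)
The statement is that for a $p$-local commutative ring spectrum $R$ with $p$ odd, $\glint R[0,1] \simeq \Z/2 \oplus \Sigma\pi_1 R$. By \Cref{rem:eta_1_truncated}, a connective $1$-truncated spectrum $X$ splits as $\pi_0 X \oplus \Sigma\pi_1 X$ precisely when the composite $\eta\cdot\colon \pi_0 X/2 \to \pi_1 X$ vanishes, so the plan is to (i) identify $\pi_0$ and $\pi_1$ of $\glint R[0,1]$, and then (ii) show the $\eta$-action is trivial when $p$ is odd.

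First I would record the homotopy groups. By \Cref{glintdef} the pullback defining $\glint R$ gives $\pi_0\glint R = \{\pm 1\} \cong \Z/2$, while $\pi_1\glint R = \pi_1\glone R = \pi_1 R$ (the pullback only changes $\pi_0$, and the map $\{\pm1\}\to\pi_0\glone R$ is injective so there is no contribution to $\pi_1$). Truncating, $\pi_0\glint R[0,1] = \Z/2$ and $\pi_1\glint R[0,1] = \pi_1 R$. So it remains to show $\eta\cdot\colon \Z/2 = (\Z/2)/2 \to \pi_1 R$ is zero. [Note $(\Z/2)/2 \cong \Z/2$, so a priori the $\eta$-action could be nonzero.]

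The key point is that $R$ is $p$-local with $p$ odd, hence so is $\glint R[0,1]$ — indeed $\glint R$ is built as a pullback of $p$-local spectra along $p$-local spectra, so its homotopy groups, and those of its truncation, are $p$-local (uniquely $2$-divisible, in particular). The $\eta$-action factors as $\pi_0 X/2 \xrightarrow{\Sq^2} \Sigma^2\pi_0 X/2 \xrightarrow{\eta}\Sigma^2\pi_1 X$ in the language of \Cref{rem:eta_1_truncated}; more simply, the map $\eta\cdot\colon\pi_0 X/2\to\pi_1 X$ is a homomorphism from a $2$-torsion group to the group $\pi_1 X = \pi_1 R$, which is uniquely $2$-divisible since $R$ is $p$-local with $p$ odd. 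Any homomorphism from a $2$-torsion abelian group to a uniquely $2$-divisible abelian group is zero. Hence the $\eta$-action vanishes and the splitting follows from \Cref{rem:eta_1_truncated}.

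This argument has no real obstacle; the only thing to be careful about is the bookkeeping that passing to $\glint$ and then truncating does not disturb $p$-locality and does not alter $\pi_1$, which is immediate from the pullback square in \Cref{glintdef} and the fact that $\{\pm1\}\hookrightarrow\pi_0\glone R$ is injective. One could alternatively phrase the conclusion by saying $\glint R[0,1]$ is a $2$-torsion-free-target $1$-truncated spectrum, but the cleanest route is simply to invoke the $\eta$-criterion of \Cref{rem:eta_1_truncated} together with unique $2$-divisibility of $\pi_1 R$.
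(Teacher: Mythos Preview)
Your argument is essentially the same as the paper's: identify $\pi_0\glint R \simeq \Z/2$ and $\pi_1\glint R \simeq \pi_1 R$, then note that the $\eta$-map $\Z/2 \to \pi_1 R$ vanishes because $\pi_1 R$ is $p$-local for odd $p$ and hence has no $2$-torsion, and conclude via \Cref{rem:eta_1_truncated}.

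One small correction: your claim that ``$\glint R[0,1]$ is $p$-local'' is false, since $\pi_0\glint R = \Z/2$ is certainly not $p$-local for odd $p$ (and $\{\pm 1\}$ in the pullback square is not $p$-local either). Fortunately you do not actually use this; your ``more simply'' sentence is the correct argument, and it only needs that $\pi_1 R$ is $p$-local, which is immediate from $R$ being $p$-local. Just delete the sentence asserting $p$-locality of $\glint R[0,1]$.
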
 

\begin{proof} 
By definition $\pi_0 \glint R \simeq \Z/2$ and $\pi_1 \glint R \simeq \pi_1 \glone R \simeq \pi_1 R$. Since $\pi_1 R$ is $p$-local and hence has no $2$-torsion, the map $\eta\colon \pi_0 \glint R \to \pi_1 \glint R$ is the zero map and we obtain the desired splitting by \Cref{rem:eta_1_truncated}.
\end{proof}

We turn to the more interesting case of $p=2$.  

\begin{lem}\label{lemmasq2glint}
   For a prime $\ell \ne 2$, the spectrum $\glint (K(\F_\ell)_2)[0,1] \simeq \glint \LKone K(\F_\ell)[0,1]$ is the fiber of the map $$\Z/2\xrightarrow{Sq^2}\Sigma^2\Z/2\xrightarrow{1\mapsto -1}\Sigma^2(\F_\ell)^\times_2.$$
\end{lem}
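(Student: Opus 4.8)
The plan is to apply \Cref{rem:eta_1_truncated}, which reduces the statement to computing, for the connective $1$-truncated spectrum $X:=\glint K(\F_\ell)_2[0,1]$, the groups $\pi_0 X$ and $\pi_1 X$ together with the homomorphism $\eta\cdot(-)\colon\pi_0 X/2\to\pi_1 X$; then $X$ is the fiber of $\pi_0 X\onto\pi_0 X/2\xrightarrow{\Sq^2}\Sigma^2\pi_0 X/2\xrightarrow{\eta}\Sigma^2\pi_1 X$. The identification $\glint K(\F_\ell)_2\simeq\glint\LKone K(\F_\ell)$ is immediate: by \Cref{LQprop} the spectrum $K(\F_\ell)_2$ is the connective cover of $\LKone K(\F_\ell)$, and $\glint$ (like $\glone$) depends only on the connective cover of its input. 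The homotopy groups are equally immediate: $\pi_0 X=\{\pm1\}\cong\Z/2$ by the definition of $\glint$ (note $-1\neq1$ in $\pi_0 K(\F_\ell)_2=\Z_2$), so $\pi_0 X/2=\Z/2$ with preferred generator $-1$; and for $i>0$ one has $\pi_i\glint=\pi_i\glone$, so $\pi_1 X=\pi_1 K(\F_\ell)_2=(K_1\F_\ell)_2=(\F_\ell^\times)_2$ by Quillen's $K_1(\F_\ell)=\F_\ell^\times$. So everything comes down to showing that $\eta\cdot(-1)$ equals $-1$ (the unique element of order $2$) in $(\F_\ell^\times)_2$.

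For the $\eta$-action I would argue by naturality using the canonical map of spectra $\pic(\F_\ell)\to\glone K(\F_\ell)$ recording $K$-theory classes of invertible modules. For the field $\F_\ell$ the Picard spectrum $\pic(\F_\ell)$ is $1$-truncated with $\pi_0=\Z$ (generated by the shift $[\Sigma\F_\ell]$ of the free module) and $\pi_1=\F_\ell^\times$; under the map to $\glone K(\F_\ell)$ the class $[\Sigma\F_\ell]$ goes to $-[\F_\ell]=-1$ in $\pi_0\glone K(\F_\ell)=\{\pm1\}$, while on $\pi_1$ the map is the canonical isomorphism $\F_\ell^\times\xrightarrow{\sim}K_1(\F_\ell)$. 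Since multiplication by $\eta$ is natural in maps of spectra --- and likewise for the $2$-completion $K(\F_\ell)\to K(\F_\ell)_2$, which induces an isomorphism on $\pi_0$ of $\glint$ and the $2$-completion map $\F_\ell^\times\to(\F_\ell^\times)_2$ on $\pi_1$ --- the class $\eta\cdot(-1)\in(\F_\ell^\times)_2$ is the image of $\eta\cdot[\Sigma\F_\ell]\in\pi_1\pic(\F_\ell)=\F_\ell^\times$. The remaining point is that $\eta\cdot[\Sigma\F_\ell]=-1$: multiplication by $\eta$ on a Picard spectrum is detected by the symmetry of the tensor square, and the symmetry isomorphism of $\Sigma\F_\ell\otimes_{\F_\ell}\Sigma\F_\ell\simeq\Sigma^2\F_\ell$ is the Koszul sign $-1$. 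This is the same graded-commutativity phenomenon that appears in the identification, recalled in the introduction, of the low truncation of $\glone\KO$ with the spectrum of $\Z/2$-graded line bundles. Plugging $\eta\cdot(-1)=-1$ into \Cref{rem:eta_1_truncated} gives exactly the asserted presentation of $X$ as the fiber of $\Z/2\xrightarrow{\Sq^2}\Sigma^2\Z/2\xrightarrow{1\mapsto-1}\Sigma^2(\F_\ell^\times)_2$.

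The one step carrying real content is the value $\eta\cdot[\Sigma\F_\ell]=-1$, equivalently the assertion that the first $k$-invariant of $\pic(\F_\ell)$ is the graded-commutativity class rather than being trivial; everything else is formal. A variant route, avoiding $\pic$, is to use the unit map $\Sph\to K(\F_\ell)$: then $-1\in\pi_0\glone K(\F_\ell)$ is the image of $-1\in\pi_0\glone\Sph$, one invokes the classical fact that $\glone\Sph[0,1]$ is the fiber of $\Sq^2$ (so $\eta\cdot(-1)=\eta$ in $\glone\Sph$), and one computes the image of $\eta$ under $\pi_1\Sph\to K_1(\F_\ell)=\F_\ell^\times$ to be $-1$ from the Barratt--Priddy--Quillen description of the unit (a transposition maps to a permutation matrix of determinant $-1$); the two routes share the same essential input.
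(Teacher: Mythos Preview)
Your proof is correct and follows essentially the same approach as the paper: reduce via \Cref{rem:eta_1_truncated} to computing the $\eta$-action, then use the map $\pic(\F_\ell)\to\glint K(\F_\ell)_2$ and the identity $\eta\cdot[\Sigma\F_\ell]=-1$ in $\pi_1\pic(\F_\ell)$. The paper phrases the last step as the formula $\eta\cdot L=\dim(L)$ together with $\dim(\Sigma S)=-1$, citing \cite[Proposition~3.20]{Telecyclo}; your Koszul-sign description of the symmetry of $\Sigma\F_\ell\otimes\Sigma\F_\ell$ is the same content.
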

\begin{proof}
Since $\pi_1 K(\F_\ell) \simeq \F_\ell^\times$ and $\pi_0 K(\F_\ell) \simeq \Z$, we see that $\glint K(\F_\ell)_2$ has the correct homotopy groups. By \Cref{rem:eta_1_truncated}, it remains to show that the map 
\[
\eta\colon \pi_0 \glint (K(\F_\ell)_2) \to \pi_1 \glint (K(\F_\ell)_2)
\]
sends the non-trivial element of the group $\pi_0\glint (K(\F_\ell)_2) \simeq \Z/2$ to the element $-1 \in \pi_1 \glint K(\F_\ell)_2 \simeq (\F_\ell^\times)_2$. 

Now, there is a canonical map of spectra 
\[
\pic(\F_\ell) \to \glint K(\F_\ell) \to \glint K(\F_\ell)_2
\]
which is given on $\pi_0$ and $\pi_1$ by the reduction mod $2$ and $2$-completion maps respectively:
\[
\Z \onto \Z/2 \quad \text{and} \quad \F_\ell^\times \onto (\F_\ell^\times)_2. 
\]
Hence, it would suffice to show that the map $\eta\colon \pi_0 \pic(\F_\ell) \to \pi_1\pic(\F_\ell)$ sends the generator $\Sigma \F_\ell\in \pic(\F_\ell)$ to the element $-1\in \pi_1 \pic(\F_\ell) \simeq \F_\ell^\times$. This follows from the facts that on the Picard spectrum of a ring $S$ we have 
$
\eta\cdot L = \dim(L) 
$
(see, e.g., \cite[Proposition 3.20]{Telecyclo}) and that $\dim(\Sigma S) = -1 \in S^\times$. 


\end{proof}

We turn to the case $R= K(\Z)$. 

\begin{lem}\label{glintKZchunk}
    The spectrum $\glint K(\Z)[0,1]\simeq \glint K(\Z)_2[0,1]$ is the fiber of $Sq^2:\Z/2\rightarrow\Sigma^2\Z/2$.
\end{lem}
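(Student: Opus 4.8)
The plan is to follow the template of \Cref{lemmasq2glint}, now with $\F_\ell$ replaced by $\Z$. First I would record the two relevant homotopy groups: by the definition of $\glint$ we have $\pi_0\glint K(\Z)\simeq\{\pm1\}\simeq\Z/2$, while $\pi_1\glint K(\Z)\simeq\pi_1 K(\Z)=K_1(\Z)=\Z^\times\simeq\Z/2$. Both are finite $2$-groups, so $2$-completion does not change the $[0,1]$-truncation; combining this with \Cref{glint_completion}, exactly as in the odd-primary and $\F_\ell$ cases, yields the asserted equivalence $\glint K(\Z)[0,1]\simeq\glint K(\Z)_2[0,1]$. By \Cref{rem:eta_1_truncated} the $1$-truncated spectrum $\glint K(\Z)[0,1]$ is then the fiber of the composite
\[
\Z/2\xrightarrow{\Sq^2}\Sigma^2\Z/2\xrightarrow{\eta\cdot}\Sigma^2\pi_1\glint K(\Z),
\]
so everything reduces to identifying the second map --- equivalently, to showing that $\eta$ acts nontrivially, sending the nonzero element of $\pi_0$ to $-1\in\pi_1\simeq\Z^\times$.

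For that point I would reuse the Picard-spectrum argument verbatim. There is a canonical map of connective spectra $\pic(\Z)\to\glint K(\Z)$ sending an invertible $\Z$-module to its class in $K$-theory; on $\pi_0$ it is the surjection $\Z=\pi_0\pic(\Z)\onto\Z/2$ carrying the generator $[\Sigma\Z]$ to $[\Sigma\Z]=-1\in K_0(\Z)^\times$, and on $\pi_1$ it is the identity of $\Z^\times$. By naturality of multiplication by $\eta$ it then suffices to check that $\eta\cdot[\Sigma\Z]=-1$ on $\pic(\Z)$, and this follows from the formula $\eta\cdot L=\dim(L)\in S^\times$ on the Picard spectrum of any commutative ring $S$ (\cite[Proposition 3.20]{Telecyclo}) together with $\dim(\Sigma\Z)=-1$. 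Feeding this back in, the map $\eta\cdot\colon\Sigma^2\Z/2\to\Sigma^2\pi_1\glint K(\Z)\simeq\Sigma^2\Z/2$ is an isomorphism, so composing with the isomorphisms at both ends identifies $\glint K(\Z)[0,1]$ with the fiber of $\Sq^2\colon\Z/2\to\Sigma^2\Z/2$, as claimed.

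I do not expect a serious obstacle: the statement is essentially \Cref{lemmasq2glint} with the prime field replaced by $\Z$, and the $2$-completion bookkeeping is if anything easier here, since $\Z^\times=\Z/2$ is already $2$-complete. The only points needing a little care are the input $K_1(\Z)=\Z^\times$ and the verification that $\pic(\Z)\to\glint K(\Z)$ is an isomorphism on $\pi_1$ and a surjection on $\pi_0$ detecting precisely the class on which $\eta$ is nonzero --- and, relatedly, the sign bookkeeping ensuring that $\eta$ genuinely hits $-1\neq1$ rather than being annihilated.
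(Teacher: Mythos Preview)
Your proposal is correct and follows essentially the same approach as the paper: both reduce via \Cref{rem:eta_1_truncated} to showing that $\eta$ acts nontrivially on $\pi_0\glint K(\Z)$, and both verify this by pulling back along $\pic(\Z)\to\glint K(\Z)$ and invoking $\eta\cdot\Sigma\Z=\dim(\Sigma\Z)=-1$. Your treatment of the $2$-completion identification $\glint K(\Z)[0,1]\simeq\glint K(\Z)_2[0,1]$ is slightly more explicit than the paper's, but otherwise the arguments coincide.
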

\begin{proof}
   Since $\pi_1 K(\Z) \simeq \Z/2$, by \Cref{rem:eta_1_truncated} it suffices to show that the map $\eta \colon \pi_0 \glint K(\Z) \to \pi_1 \glint K(\Z)$ is the non-zero map. Here, as in the case of $\F_\ell$, one can use the map $\pic(\Z) \to \glint K(\Z)$. Indeed, this map is given on $\pi_0$ and $\pi_1$ by the maps $\Z\onto \Z/2$ and $\Z/2 \simeq \Z/2$ respectively, so it is enough to show that $\eta$ takes the generator of $\pi_0 \pic(\Z)$ to a non-zero class. This follows as above from the fact that 
    \[
    \eta\cdot \Sigma \Z = -1 \in \pi_1\pic(\Z).
    \]
\end{proof}

Passing from the $2$-completion to the $T(1)$-localization introduces an extra summand to $\glint$.

\begin{lem}
At the prime $p = 2$, we have 
    \[
    \glint \LKone K(\Z)[0,1]\simeq \Sigma\Z_2\oplus \glint K(\Z)[0,1]
    \]
\end{lem}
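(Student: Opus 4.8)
The plan is to pin down the $1$-truncated spectrum $\glint\LKone K(\Z)[0,1]$ by computing its homotopy groups together with the action of $\eta$ on it, and then to read off the splitting from \Cref{rem:eta_1_truncated}. By \Cref{LQprop} the connective cover of $\LKone K(\Z)$ is $K(\Z[1/2])_2$, so by the definition of $\glint$ we have $\pi_0\glint\LKone K(\Z)\simeq\{\pm1\}\simeq\Z/2$ and
\[
\pi_1\glint\LKone K(\Z)\simeq\pi_1 K(\Z[1/2])_2\simeq(\Z[1/2]^\times)_2\simeq\Z/2\oplus\Z_2,
\]
with the $\Z/2$ generated by $-1$ and the $\Z_2$ by $2$. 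By \Cref{rem:eta_1_truncated} it remains only to compute the map $\eta\cdot\colon\Z/2\simeq\pi_0/2\to\pi_1\simeq\Z/2\oplus\Z_2$; since $2\eta=0$, its image lies in the torsion subgroup $\Z/2\oplus 0$, so the only question is whether $\eta\cdot$ vanishes or is an isomorphism onto that summand.

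To see that it is nonzero I would use the map $\phi\colon\glint K(\Z)_2\to\glint\LKone K(\Z)$ obtained by applying $\glint$ to the localization map $K(\Z)_2\to\LKone K(\Z)\simeq K(\Z[1/2])_2$; this is exactly the map whose cofiber is identified in the proof of \Cref{K1reduction}(2). On $\pi_0$ the map $\phi$ is the identity of $\Z/2$, and on $\pi_1$ it is the map $K_1(\Z)_2\to K_1(\Z[1/2])_2$, that is, the inclusion $\Z/2\hookrightarrow\Z/2\oplus\Z_2$ of the torsion summand (induced by $\{\pm1\}\hookrightarrow\Z[1/2]^\times$). Since every map of spectra commutes with multiplication by $\eta$, and since \Cref{glintKZchunk} identifies $\glint K(\Z)_2[0,1]\simeq\glint K(\Z)[0,1]$ with the fiber of $\Sq^2\colon\Z/2\to\Sigma^2\Z/2$ --- which by \Cref{rem:eta_1_truncated} is precisely the statement that there $\eta\cdot$ is an isomorphism $\pi_0/2\to\pi_1$ --- it follows that on $\glint\LKone K(\Z)$ the operation $\eta\cdot$ carries the generator of $\pi_0/2$ to $(\text{generator},0)\in\Z/2\oplus\Z_2$.

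Feeding this into \Cref{rem:eta_1_truncated}, the spectrum $\glint\LKone K(\Z)[0,1]$ is the fiber of the composite
\[
\Z/2\onto\Z/2\oto{\Sq^2}\Sigma^2\Z/2\oto{(\mathrm{incl},0)}\Sigma^2\Z/2\oplus\Sigma^2\Z_2,
\]
whose component into $\Sigma^2\Z_2$ is null. Since the fiber of a map $A\to B\oplus C$ whose $C$-component is null is $\mathrm{fib}(A\to B)\oplus\Sigma^{-1}C$, this fiber is $\mathrm{fib}(\Sq^2\colon\Z/2\to\Sigma^2\Z/2)\oplus\Sigma\Z_2$; by \Cref{glintKZchunk} the first summand is $\glint K(\Z)[0,1]$, which is the asserted decomposition.

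The step requiring the most care is the middle one: verifying that the localization map induces on $\pi_1\glint$ exactly the inclusion of the torsion summand (which rests on the localization sequence and Quillen's computation of $K_*(\F_p)$, as already used in \Cref{LQprop} and \Cref{K1reduction}) and that $\eta$ acts nontrivially on $\glint K(\Z)_2[0,1]$ (which is the content of \Cref{glintKZchunk}). Granting these, the rest is the formal bookkeeping supplied by \Cref{rem:eta_1_truncated}.
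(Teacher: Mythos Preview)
Your proof is correct, and it takes a somewhat different route from the paper's. The paper argues via the cofiber sequence $\glint K(\Z)\to\glint\LKone K(\Z)\to\Sigma\Z_2$ of \Cref{K1reduction}: after checking that $\pi_1\glint\LKone K(\Z)\to\Z_2$ is surjective (it is the $2$-adic valuation), one gets a cofiber sequence on $1$-truncations, and the boundary map $\Sigma\Z_2\to\Sigma\glint K(\Z)[0,1]$ is shown to vanish because it factors through $\Map(\Sigma\Z_2,\Sigma^2\Z/2)=0$. Your argument instead classifies the $1$-truncated spectrum directly via \Cref{rem:eta_1_truncated}: you compute $\pi_0,\pi_1$ and then determine the $\eta$-action by pushing the known nontrivial $\eta$-action on $\glint K(\Z)_2[0,1]$ forward along the localization map. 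Both approaches consume the same inputs (\Cref{K1reduction}, \Cref{glintKZchunk}, and the description of $K_1(\Z[1/2])$), but yours avoids manipulating the cofiber sequence and the obstruction-theoretic step, trading them for the observation that $2\eta=0$ forces the image of $\eta$ into the torsion summand; the paper's argument, on the other hand, makes the compatibility with the cofiber sequence explicit, which is what is used later in \Cref{sectionKZp}.
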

\begin{proof}
    Recall that $\LKone K(\Z)[0,\infty) \simeq K(\Z[1/2])_2$. 
    It follows from \Cref{K1reduction} that we have a cofiber sequence 
    \[
    \glint K(\Z) \too \glint \LKone K(\Z) \too \Sigma \Z_2. 
    \]
    Since the map $\pi_1\LKone K(\Z) \too \pi_1\Sigma \Z_2 = \Z_2$ can be identified with  the $2$-completion of the $2$-adic valuation map $\Z[1/2]^\times \to \Z$ it is in particular a surjection. Hence, the above sequence induces a cofiber sequence
    \[
    \glint K(\Z)[0,1] \too \glint \LKone K(\Z)[0,1] \too \Sigma \Z_2
    \]
    bewtween the $1$-truncations. 
    The resulting boundary map $\Sigma \Z_2 \to \Sigma \glint K(\Z)[0,1]$ is trivial on $\pi_1$ and hence lifts to a map of spectra 
    \[
    \Sigma \Z_2 \too \Sigma^2 \pi_1 \glint K(\Z) \simeq \Sigma^2\Z/2.  
    \]
    Since $\pi_0\Map_{\Sp}(\Sigma \Z_2,\Sigma ^2 \Z/2) = 0$ the boundary map must vanish and the sequence splits. 
    
\end{proof}

\subsection{Splitting $\slone R$}
\label{sl1split}

One can further simplifty $\glone R$ by removing its $\pi_0$ completely. The resulting spectrum is $\slone R := \glone R[1,\infty)$. Splitting the bottom homotopy group from $\slone R$ is strictly easier in general than splitting $\glint R[0,1]$ from $\glint R$. Indeed, a splitting 
\[
\glint R \simeq \glint R[0,1] \oplus \glint R [2,\infty) 
\]
induces, by taking connected covers, a splitting 
\[
\slone R \simeq \Sigma \pi_1 \slone R \oplus \glint R[2,\infty).
\]
While the converse does not hold in general, splitting $\slone R$ is an important step towards splitting $\glint R$, so we show the existence of such splittings for our ring spectra of interest first.

\begin{lem}
    Let $R$ be a ring such that the truncation map  $\slone R\rightarrow \slone R[0,1]\simeq \Sigma\pi_1\glone R$ is split. Then $\pi_1\glone R$ is a subgroup of $\pi_1\G_mR$ via a map $\pi_1\glone R=[\Sigma\Z,\Sigma \pi_1\glone R]\rightarrow [\Sigma\Z, \slone R].$

\end{lem}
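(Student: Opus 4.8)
The plan is to build the claimed monomorphism as the map induced on $[\Sigma\Z,-]$ by a section of the truncation $\slone R\to\slone R[0,1]$, and to verify injectivity by exhibiting an explicit retraction; all of the real content is already contained in the hypothesis.

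The one input I would use is that $\Map_{\Sp}(\Z,M)\simeq M$ for every abelian group $M$, which holds because $\Z$ is free and hence $\mathrm{Ext}^{i}_{\Z}(\Z,-)=0$ for $i>0$. This has two consequences. First, $[\Sigma\Z,\Sigma\pi_1\glone R]=\pi_0\Map_{\Sp}(\Z,\pi_1\glone R)=\pi_1\glone R$, which is the identification already appearing in the statement. Second, $\Map_{\Sp}(\Sigma\Z,\pi_0\glone R)$ is concentrated in degree $-1$, so applying $[\Sigma\Z,-]$ to the fiber sequence $\slone R\to\glone R\to\pi_0\glone R$ shows that the canonical map $\slone R\to\glone R$ induces an isomorphism $[\Sigma\Z,\slone R]\xrightarrow{\ \sim\ }[\Sigma\Z,\glone R]$. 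Since $\pi_1\G_m R=[\Sigma\Z,\glone R]$ by the construction of $\G_m R$ as a connective mapping spectrum, this gives $\pi_1\G_m R\cong[\Sigma\Z,\slone R]$.

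Now let $t\colon\slone R\to\slone R[0,1]\simeq\Sigma\pi_1\glone R$ be the first Postnikov truncation, and let $s\colon\Sigma\pi_1\glone R\to\slone R$ be a section, which exists by hypothesis and satisfies $t\circ s\simeq\operatorname{id}$. Postcomposition with $s$, respectively with $t$, yields group homomorphisms
\[
s_*\colon\pi_1\glone R=[\Sigma\Z,\Sigma\pi_1\glone R]\longrightarrow[\Sigma\Z,\slone R]=\pi_1\G_m R,\qquad t_*\colon[\Sigma\Z,\slone R]\longrightarrow[\Sigma\Z,\Sigma\pi_1\glone R],
\]
and $t_*\circ s_*=(t\circ s)_*=\operatorname{id}$. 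Hence $s_*$ is a split monomorphism of abelian groups, which exhibits $\pi_1\glone R$ as a direct summand --- in particular a subgroup --- of $\pi_1\G_m R$, as claimed. I do not expect a genuine obstacle here: once the splitting of $\slone R\to\slone R[0,1]$ is granted, the argument is formal, and the only points requiring care are the two mapping-group identifications above, in particular the observation that a map out of the $1$-connective spectrum $\Sigma\Z$ cannot detect $\pi_0\glone R$, so that $\pi_1\G_m R$ may be computed from $\slone R$ rather than from $\glone R$.
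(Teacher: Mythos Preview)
Your argument is correct and follows exactly the approach the paper takes: the paper's proof simply says ``This is immediate. The desired map is induced by any choice of splitting $\Sigma\pi_1\glone R=\slone R[0,1]\rightarrow \slone R$,'' and you have spelled out the implicit identifications and the injectivity via the retraction $t_*$ that the paper leaves to the reader.
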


\begin{proof}
    This is immediate. The desired map is induced by any choice of splitting $\Sigma\pi_1\glone R=\slone R[0,1]\rightarrow \slone R$.
\end{proof}

\begin{lem}\label{splitting_slone}
    Let $S$ be a ring for which $\pi_1K(S) \simeq S^\times $ (e.g., a local ring). Then, there is a splitting of $\slone K(S)$ as 
$$
\slone K(S)\simeq \Sigma(S^\times) \oplus \slone K(S)[2,\infty). 
$$
If, moreover, $K(S)$ has finitely generated homotopy groups, we get a corresponding factorization 
$$
\slone K(S)_p\simeq \Sigma(S^\times)_p \oplus \slone K(S)_p[2,\infty). 
$$
\end{lem}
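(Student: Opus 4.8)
The plan is to construct a section of the truncation map $\slone K(S)\to\slone K(S)[0,1]\simeq\Sigma(S^\times)$ using the canonical map $\pic(S)\to\glone K(S)$ that sends an invertible module to its $K$-theory class. First I would note that, since $S$ is discrete, $\pic(S)$ is $1$-truncated with $\pi_1\pic(S)\simeq S^\times$, so passing to $1$-connective covers produces a map of Eilenberg--MacLane spectra
\[
a\colon\Sigma(S^\times)\simeq\pic(S)[1,\infty)\too\glone K(S)[1,\infty)=\slone K(S),
\]
whose effect on $\pi_1$ is the standard comparison map $S^\times\to K_1(S)$ (an automorphism of the trivial line is a unit of $S$, mapped to the corresponding class in $K_1$). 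The hypothesis $\pi_1 K(S)\simeq S^\times$ says precisely that this comparison is an isomorphism, so $a$ is an isomorphism on $\pi_1$.

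Next I would compose $a$ with the truncation map $t\colon\slone K(S)\to\slone K(S)[0,1]\simeq\Sigma(S^\times)$, which is also an isomorphism on $\pi_1$. Since both the source and target of $t\circ a$ are concentrated in degree $1$, the composite $t\circ a\colon\Sigma(S^\times)\to\Sigma(S^\times)$ is an equivalence, and $s:=a\circ(t\circ a)^{-1}$ is then a section of $t$. A section of the truncation map splits the fiber sequence
\[
\slone K(S)[2,\infty)\too\slone K(S)\xrightarrow{\ t\ }\Sigma(S^\times),
\]
which gives the asserted splitting $\slone K(S)\simeq\Sigma(S^\times)\oplus\slone K(S)[2,\infty)$.

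For the $p$-complete refinement, when $K(S)$ has finitely generated homotopy groups so does $\slone K(S)$, and $p$-completion then commutes with the connective covers, truncations, and finite direct sums used above --- in particular $(\Sigma(S^\times))_p\simeq\Sigma(S^\times_p)$. Applying $(-)_p$ to the splitting just constructed therefore yields $\slone K(S)_p\simeq\Sigma(S^\times)_p\oplus\slone K(S)_p[2,\infty)$.

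I do not expect a serious obstacle here: the only content is the identification of $a$ on $\pi_1$ with the comparison map $S^\times\to K_1(S)$, which is built into the construction of $\pic(S)\to\glone K(S)$. The single point to be careful about is that $\pic(S)$ is genuinely $1$-truncated, so that its $1$-connective cover is exactly $\Sigma(S^\times)$ with no higher homotopy; this holds because the relevant mapping objects over a discrete ring are discrete. The hypothesis $\pi_1 K(S)\simeq S^\times$ is exactly what makes the bottom homotopy group of $\slone K(S)$ split off in this way.
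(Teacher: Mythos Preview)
Your proof is correct and follows essentially the same approach as the paper: both use the map $\pic(S)\to\glone K(S)$, pass to $1$-connective covers to obtain $\Sigma S^\times\simeq\pic(S)[1,\infty)\to\slone K(S)$, and observe this provides the desired section; the $p$-complete statement is then obtained by $p$-completing the splitting. Your extra care in post-composing with $(t\circ a)^{-1}$ is harmless but unnecessary once one fixes the identification $\pi_1 K(S)\simeq S^\times$ to be the canonical comparison map.
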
 
\begin{proof}
     The result after $p$-completion follows immediately from the first claim by taking $p$-completions from the decomposition. 
     The map $\pic(S)\rightarrow \glone K(S)$ restricts to a map 
     \[
     \Sigma S^\times \simeq \pic(S)[1,\infty) \rightarrow \glone K(S) [1,\infty) \simeq \slone K(S)  
     \]
     which is a section of the $1$-truncation map and hence induces the desired splitting.
\end{proof}

\section{Strict units and the logarithmic fiber} 
\subsection{Preliminaries}
The main advantages of a $T(1)$-local ring spectrum in studying its units are the logarithm map and the power operation $\theta$. 
The first is a morphism of spectra 
\[
\log \colon \glone R \too R, 
\]
defined by Rezk using the Bousfield-Kuhn functor (\cite{rezk}). 
\begin{defn}
For a $T(1)$-local ring spectrum $R$, we let $\fiblog{R}$ be the fiber of the map $\log\colon \glone R \to R$.
\end{defn}




Closely related to the logarithm is the power operation $\theta$, which defines a map $\theta\colon \Omega^\infty R \to \Omega^\infty R$. When restricted to the units, it precisely cuts out the space of strict units via the fiber sequence (of spaces!) 
\[
\Omega^\infty \G_m(R) \rightarrow \Omega^\infty \glone R \oto{\theta} \Omega^\infty R. 
\]

The main calculations of this section are based on a comparison between the long exact sequences associated to $\log$ and $\theta$. We will a few key lemmas.

\begin{lem}\label{Zloglemma}
    Let $R$ be a $T(1)$-local ring. Then 
    $$\hom(\Z,\fiblog R)\simeq \G_m R.$$
\end{lem}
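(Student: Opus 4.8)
The plan is to apply the functor $\hom(\Z,-)$ to the defining fiber sequence
\[
\fiblog R \too \glone R \oto{\log} R
\]
and to show that the third term dies. Since $\hom(\Z,-)$ preserves fiber sequences, this produces a fiber sequence
\[
\hom(\Z,\fiblog R) \too \hom(\Z,\glone R) \too \hom(\Z,R),
\]
whose middle term is by definition $\G_m R$. So the whole statement reduces to the claim that $\hom(\Z,R)\simeq 0$.

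For this I would use the $T(1)$-locality of $R$. Recall $\LKone$ agrees with $K(1)$-localization, so $R$ is $K(1)$-local and hence the contravariant functor $\hom_{\Sp}(-,R)$ inverts $K(1)$-equivalences. It therefore suffices to recall the classical fact that $H\Z$ is $K(1)$-acyclic: indeed $K(n)_*(HA)=0$ for every $n\ge 1$ and every discrete abelian group $A$, by the Ravenel--Wilson computation of the Morava $K$-theory of Eilenberg--MacLane spaces, so in particular $L_{K(1)}(H\Z)\simeq 0$. Consequently $\hom(\Z,R)\simeq \hom(L_{K(1)}(H\Z),R)\simeq 0$, and the fiber sequence above collapses to the asserted equivalence $\hom(\Z,\fiblog R)\simeq \G_m R$.

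I do not expect a genuine obstacle here; the only point to be careful about is a bookkeeping one. Since $R$, and hence $\fiblog R$, need not be connective, the symbol $\hom(\Z,-)$ should be read throughout as the connective mapping spectrum $\tau_{\ge 0}\hom_{\Sp}(\Z,-)$, consistent with the definition $\G_m R=\hom_{\Sp^{\cn}}(\Z,\glone R)$ and the fact that $\glone R$ is connective. This functor still preserves fibers, being the composite of the exact functor $\hom_{\Sp}(\Z,-)$ with the connective cover $\tau_{\ge 0}$, which is itself a right adjoint; and it sends the zero spectrum to $0$. Hence the argument goes through verbatim.
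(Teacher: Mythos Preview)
Your proof is correct and follows exactly the same approach as the paper: apply $\hom(\Z,-)$ to the logarithmic fiber sequence and use that $\hom(\Z,R)=0$ for $T(1)$-local $R$. The paper is terser, simply asserting the vanishing of $\hom(\Z,R)$ and omitting the connectivity bookkeeping you spell out; your added justification via $K(1)$-acyclicity of $H\Z$ and the remark that $\tau_{\ge 0}$ preserves fibers are both sound and make the argument more self-contained.
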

\begin{proof}
    Since $R$ is $T(1)$-local, we have $\hom(\Z,R) = 0$. Hence, 
    the result follows by applying $\hom(\Z,-)$ to the fiber sequence 
    \[
    \fiblog R \too \glone R \oto{\log} R.
    \]
\end{proof}

\begin{lem}\label{logLES}
    Let $R$ be a $T(1)$-local ring. Then 
    $\fiblog(R)$ is 2-truncated.
\end{lem}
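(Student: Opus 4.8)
The plan is to read everything off the long exact sequence in homotopy attached to the defining fiber sequence $\fiblog R \too \glone R \xrightarrow{\log} R$, using Rezk's description of the logarithm on homotopy groups. Recall that $\glone R$ is connective, with $\pi_0\glone R = (\pi_0 R)^\times$ and $\pi_k\glone R \cong \pi_k R$ for $k\ge 1$; under this last identification the entire content of the lemma is the behavior of $\log_*$ in high degrees.

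\emph{Step 1 (input from \cite{rezk}).} For a $T(1)$-local ring $R$ and $k\ge 1$, Rezk's calculation identifies $\log_*\colon \pi_k\glone R \to \pi_k R$ with the operation $1 - \tfrac1p\psi^p$ (equivalently, $1$ minus the relevant additive power operation, the one packaged in $\theta$) acting on $\pi_k R$. The key point is then a smallness phenomenon peculiar to the $T(1)$-local setting: the operation $\tfrac1p\psi^p$ is topologically nilpotent --- indeed $p$-divisible --- on $\pi_k R$ for every $k\ge 3$, so that $1 - \tfrac1p\psi^p$ is invertible there. Hence $\log_*$ is an isomorphism $\pi_k\glone R \xrightarrow{\sim} \pi_k R$ for all $k\ge 3$. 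In degrees $k=1,2$ this genuinely fails: for $R=\KU_p$ one has $\psi^p = p^j$ on $\pi_{2j}$, so $\tfrac1p\psi^p$ is the identity on $\pi_2$ and $\log_*$ vanishes there, which is precisely why $\fiblog R$ is only $2$-truncated and not $1$-truncated.

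\emph{Step 2 (running the exact sequence).} For $i\ge 3$ consider the exact segment
\[
\pi_{i+1}\glone R \xrightarrow{\log_*} \pi_{i+1}R \too \pi_i\fiblog R \too \pi_i\glone R \xrightarrow{\log_*} \pi_i R .
\]
Since $i+1\ge 4\ge 3$ and $i\ge 3$, both copies of $\log_*$ are isomorphisms; surjectivity of the left one and injectivity of the right one force $\pi_i\fiblog R = 0$. As this holds for all $i\ge 3$, the spectrum $\fiblog R$ is $2$-truncated. Note that ``$2$-truncated'' imposes no condition in negative degrees, so the (possibly nonzero) negative homotopy of $\fiblog R$ coming from $\log$ on $\pi_0$ is harmless.

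The main obstacle is Step 1: one must pin down Rezk's logarithm precisely enough on homotopy groups and then check, \emph{uniformly over all $T(1)$-local rings}, that the ambient power operation is $p$-divisible on $\pi_k R$ for $k\ge 3$ while it may fail to be for $k=2$. The model $R=\KU_p$, with $\psi^p$ acting on $\pi_{2j}$ by $p^j$, simultaneously explains why this holds and why $3$ is the sharp cutoff; it is also consistent with the companion fact that $\G_m R = \hom(\Z,\fiblog R)$ is connective. Everything downstream of Step 1 is just bookkeeping with the long exact sequence. An alternative route would be to compare the long exact sequence of $\log$ with that of the power operation $\theta$, whose fiber over the units computes $\G_m R$, but this appears to need the same smallness input in the end.
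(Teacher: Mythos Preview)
Your approach is different from the paper's, and the gap you flag in Step~1 is genuine and unresolved. You assert that $\tfrac{1}{p}\psi^p$ is topologically nilpotent on $\pi_kR$ for every $k\ge 3$ and every $T(1)$-local $R$, but you only verify this for $R=\KU_p$. The $\KU_p$ computation is suggestive but proves nothing about an arbitrary $T(1)$-local commutative ring spectrum; there is no naturality argument here, since $\psi^p$ is not $\pi_*$-linear and an arbitrary $R$ is not a retract of $\KU_p$. You yourself call this ``the main obstacle,'' and indeed it is the entire content of the lemma: once you know $\log_*$ is an isomorphism in degrees $\ge 3$, the long exact sequence argument in Step~2 is immediate. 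So as written this is a plan, not a proof.

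The paper sidesteps the power-operation analysis entirely. It compares $\fiblog R$ with $\fibahr R := \fib(\glone R \to L_1\glone R)$, which is known to be $1$-truncated by a theorem of Ando--Hopkins--Rezk. The chromatic fracture square identifies the fiber of $\fibahr R \to \fiblog R$ with a rational spectrum, so $\pi_i\fiblog R$ is rational for $i\ge 3$; but the long exact sequence for $\log$ shows these same groups are extensions of derived $p$-complete groups, hence $p$-complete, and a group that is both rational and $p$-complete vanishes. This argument never touches the explicit formula for $\log_*$ and requires no uniform smallness statement about $\theta$ or $\psi^p$. If you want to salvage your route, you would need either a direct structural argument that the additive power operation is divisible in degrees $\ge 3$ on every $T(1)$-local ring, or a citation to such a result; absent that, the paper's chromatic comparison is both shorter and complete.
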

\begin{proof}
    For an $L_1$-local commutative ring spectrum $R$, let $\fibahr{R}$ be the fiber of the $L_1$-localization map $\glone R \to L_1 \glone R$. By \cite[Theorem 4.11]{ahr}, $\fibahr(R)$ is $1$-truncated. Consider the commutative diagram where the rows are fiber sequences
    $$\begin{tikzcd}
        \fibahr R\arrow[r]\arrow[d]& \glone R\arrow[r]\arrow[d]& L_1\glone R\arrow[d]\\
        \fiblog R \arrow[r]& \glone R\arrow[r]& \LKone \glone R\simeq R\\
    \end{tikzcd}.$$
    The right vertical arrow is the top row of the chromatic fracture square, so its fiber agrees with that of the bottom row of the chromatic fracture square which is a rational spectrum. Call this fiber $Q$. Note also that the fiber of the map $\fibahr R \to \fiblog R$ agrees with $\Omega Q$.  

    Since $\fibahr R$ is 1-truncated, the homotopy groups of $\fiblog R$ in degrees $\geq 3$ agree with those of $\Omega Q$, which are rational. But on the other hand, the homotopy groups of $\fiblog R$ sit in an long exact sequnce with the homotopy groups of $\glone R$ and $R$, which in degrees $\geq 3$ are the homotopy groups of a $T(1)$-local ring, and hence derived $p$-complete. Hence, these groups vanish and $\fiblog R$ is 2-truncated.
\end{proof}

\begin{rem}
    It is possible for $\fiblog R$ to have nontrivial $\pi_2$. 
    Indeed, exactly that happens in the case of $R=KU_p$.

\end{rem}

\begin{lem}\label{ahrz}
    For any $T(1)$-local ring $R$, $\G_m R$ is $2$-truncated and $\pi_2 \G_m R$ is torsion free.
\end{lem}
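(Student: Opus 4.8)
The plan is to combine \Cref{Zloglemma} with \Cref{logLES} together with the structural input from \cite{ahr} on $\fibahr R$, and then extract the needed vanishing and torsion-freeness from the long exact sequences. By \Cref{Zloglemma} we have $\G_m R \simeq \hom(\Z, \fiblog R)$, and by \Cref{logLES} the spectrum $\fiblog R$ is $2$-truncated, concentrated in homotopy degrees $0,1,2$. So the first step is to understand $\hom(\Z,-)$ applied to a $2$-truncated spectrum: using the Postnikov filtration of $\fiblog R$ and the computation of $\hom(\Z, \Sigma^k A) = \Sigma^k \hom_{\mathbb{Z}}(\ldots)$ for Eilenberg--MacLane spectra, one sees that $\hom(\Z, \fiblog R)$ can only be supported in degrees $0,1,2$ as well — the point being that $\hom(\Z, \Sigma^k HA)$ for $A$ an abelian group is again concentrated in nonnegative degrees $\le k$ (indeed $\hom_{\Sp}(\mathbb{Z}, HA)$ in the sense of connective spectra / the derived category of $\mathbb{Z}$ just picks out $A$ and $\mathrm{Ext}^1$ terms, all in a bounded range). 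Hence $\G_m R$ is automatically $2$-truncated. This is the soft part.

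Next I would pin down $\pi_2 \G_m R$. The cleanest route is via the diagram in the proof of \Cref{logLES}: there is a rational spectrum $Q$ with $\fibahr R \to \fiblog R$ having fiber $\Omega Q$, and $\fibahr R$ is $1$-truncated by \cite[Theorem 4.11]{ahr}. Applying $\hom(\Z,-)$ to the fiber sequence $\Omega Q \to \fibahr R \to \fiblog R$ gives a long exact sequence relating $\pi_* \hom(\Z,\fibahr R)$, $\pi_* \hom(\Z,\Omega Q)$, and $\pi_* \G_m R$. Since $\fibahr R$ is $1$-truncated, $\hom(\Z, \fibahr R)$ is concentrated in degrees $\le 1$ (by the same boundedness observation as above), so in degree $2$ we get an isomorphism $\pi_2 \G_m R \cong \pi_2 \hom(\Z, \Omega Q) = \pi_3 \hom(\Z, Q)$. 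Now $Q$ is a rational spectrum, so $\hom(\Z, Q)$ is computed in the derived category of $\mathbb{Q}$-vector spaces: $\pi_* \hom(\Z, Q) = \mathrm{Hom}_{\mathbb{Z}}(\mathbb{Z}, \pi_* Q) = \pi_* Q$ in each degree (there is no $\mathrm{Ext}$ contribution since $\mathbb{Z}$ is free, and tensoring with $\mathbb{Q}$ is exact). In particular $\pi_2 \G_m R$ is a $\mathbb{Q}$-vector space, hence torsion free. That $\pi_2 \G_m R$ is torsion free could alternatively be seen directly: it is $\hom(\Z, \pi_2\fiblog R[2])$-ish, i.e. $\mathrm{Hom}(\mathbb{Z},\pi_2\fiblog R) = \pi_2\fiblog R$, and by the analysis in \Cref{logLES} the group $\pi_2 \fiblog R$ injects into a rational group ($\pi_2 \fiblog R \cong \pi_2 \Omega Q = \pi_3 Q$ modulo the $1$-truncated correction), hence is torsion free.

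The main obstacle I anticipate is bookkeeping with the various $\hom(\Z,-)$ long exact sequences and making sure the boundedness claims are airtight — in particular verifying that for a spectrum $X$ concentrated in degrees $[0,n]$, the mapping spectrum $\hom_{\Sp^{\cn}}(\Z, X)$ (equivalently $\tau_{\ge 0}\hom_{\Sp}(\mathbb{Z},X)$, which is what $\G_m$ of a ring spectrum uses) is concentrated in degrees $[0,n]$, and that the connective truncation does not interfere in degree $2$. This is essentially the statement that $\hom_{\Sp}(\mathbb{Z}, HA)$ has homotopy only in degrees $0$ and $-1$ (namely $A$ and, if one is careless about which $\mathbb{Z}$, an $\mathrm{Ext}$ term that vanishes since $\mathbb{Z}$ is free), so no negative-degree contributions sneak in. Once that is granted, the degree-$2$ identification with a piece of the rational spectrum $Q$ is forced, and both conclusions — $2$-truncatedness and torsion-freeness of $\pi_2$ — drop out. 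A secondary point to be careful about is whether $\pi_2 \G_m R$ could receive a contribution from $\pi_1 \hom(\Z,\fibahr R)$ via the connecting map; since that source is itself a subquotient of the $1$-truncated $\fibahr R$ data, one checks the connecting map lands appropriately and does not obstruct the isomorphism onto the rational group.
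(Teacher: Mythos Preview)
Your approach is essentially the paper's second argument: $2$-truncatedness follows from \Cref{Zloglemma} and \Cref{logLES}, and torsion-freeness of $\pi_2$ comes from the comparison with $\fibahr R$ and the rational spectrum $Q$. The paper runs this last step via the fiber sequence $\Map(\Z,\fibahr R)\to \G_m R \to \Map(\Z,L_1\glone R)\simeq Q$, but your version through $\fibahr R \to \fiblog R \to Q$ is equivalent.

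Two small corrections. First, your long exact sequence indexing is off: from $\hom(\Z,\Omega Q)\to \hom(\Z,\fibahr R)\to \G_m R$ one obtains
\[
0=\pi_2\hom(\Z,\fibahr R)\longrightarrow \pi_2\G_m R \longrightarrow \pi_1\hom(\Z,\Omega Q)\simeq \pi_2 Q,
\]
so $\pi_2\G_m R$ \emph{injects} into $\pi_2 Q$, rather than being isomorphic to $\pi_3 Q$. (Your worry at the end about a contribution from $\pi_1\hom(\Z,\fibahr R)$ reflects this confusion; that group sits on the other side of $\pi_2\G_m R$ in the sequence and is irrelevant.) Second, injection into a rational group yields torsion-freeness but not that $\pi_2\G_m R$ is itself a $\Q$-vector space. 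Neither slip affects the conclusion.

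The paper also records a separate argument via $\mu_p R := \hom(\Z/p,\glone R)$ and the fiber sequence $\mu_p R \to \G_m R \xrightarrow{p} \G_m R$, reducing everything to the $1$-truncatedness of $\mu_p R$, which follows from the $T(1)$-local vanishing of $\Sigma^\infty_+ B^2(\Z/p)$. You do not need this, but it is a cleaner route if one wants to avoid the $\fibahr$/$\fiblog$ bookkeeping.
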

\begin{proof}

    First, $\Omega \G_m R  \simeq \hom(\Z, \Omega \glone R)$ is $p$-complete since $\Omega \glone R$ is. Let $\mu_p(R):= \hom(\Z/p,\glone R)$. Hence, by the long exact sequence of homotopy groups  associated with the fiber sequence 
    \[
    \mu_p R \too \G_m R \oto{p} \G_m R, 
    \]
    the claim is equivalent to the fact that $\mu_p(R)$ is $1$-truncated, which follows from the vanishing of the $T(1)$-local suspension spectrum of $B^2\Z/p$ (see, e.g., \cite[Proposition 4.1]{Telecyclo}). 
    We can also argue using the logarithmic fiber: the truncatedness claim follows immediately from  \Cref{Zloglemma} and \Cref{logLES}.
    Consider the fracture square for $L_1\glone R$.
   $$\begin{tikzcd}
        L_1\glone R\arrow[r]\arrow[d]&\LKone\glone R\simeq R\arrow[d] \\
        L_0\glone R\arrow[r]&L_0\LKone\glone R \\
    \end{tikzcd}.$$
    Let $Q$ be the fiber of bottom row.  Since there are no maps from $\Z$ to a $T(1)$-local spectrum, we find that $\Map(\Z, L_1\glone R)=\Map(\Z,Q)=Q$ and is therefore rational. 
    As mentioned before, the fiber $\fibahr$ of
    $\glone R\rightarrow L_1\glone R$ 1-truncated. Therefore the fiber sequence 
    $$\Map(\Z,\fibahr)\rightarrow \G_mR\rightarrow \Map(\Z,L_1\glone R)$$
   shows that $\pi_2\G_mR$ must inject into a rational group, so it is torsion free.
\end{proof}

\begin{lem}\label{keylogthetaformula}
    Let $R$ be a $T(1)$-local ring. Suppose that $\theta_1\colon \pi_1 R \to \pi_1 R$ (at the basepoint $1\in R$) is zero. Then the map  $\log_1 \colon \pi_1 R \to \pi_1R$  is also zero.
\end{lem}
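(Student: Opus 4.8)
The statement relates the degree-1 part of the power operation $\theta$ to that of the Rezk logarithm. Both $\theta$ and $\log$ are natural operations on $T(1)$-local ring spectra, and in fact Rezk's original work expresses $\log$ as an explicit infinite sum built out of the power operations (the additive/multiplicative ``$\log = \sum \frac{(-1)^{n-1}}{n}(\ldots)$'' formula). In low degrees this sum collapses, and the relevant relation is that on $\pi_1 R$ one has, roughly, $\log_1 = 1 - \frac{1}{p}\psi^p$ where $\psi^p$ is the (additive) $p$-th power operation, and $\theta$ is the operation with $\psi^p(x) = x^p + p\,\theta(x)$. On $\pi_1$ of a $T(1)$-local ring, $x^p$ is divisible enough that $\psi^p$ is essentially governed by $\theta_1$; so the hypothesis $\theta_1 = 0$ forces $\psi^p$ to act in a controlled way and then $\log_1$ to vanish. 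The plan is to make this precise using whichever normalization of the $\log$–$\theta$ relation the paper has set up in its preliminaries on the logarithm.

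**Key steps, in order.** First, recall the fiber sequence of spaces $\Omega^\infty \G_m R \to \Omega^\infty \glone R \xrightarrow{\theta} \Omega^\infty R$ and the fact that $\theta$ is, up to the usual affine shift, the $T(1)$-local power operation; on homotopy in degree $1$ this gives a relation between $\theta_1$, the $p$-th power operation $\psi^p$, and multiplication by $p$. Second, invoke Rezk's formula for $\log$ (or the comparison the paper is about to use in the subsequent lemmas) to express $\log_1\colon \pi_1 R \to \pi_1 R$ in terms of $\psi^p$ and $p$ — concretely, $\log_1 = 1 - p^{-1}\psi^p$ acting on the $p$-complete group $\pi_1 R$, which makes sense because $\psi^p$ is divisible by $p$ on $\pi_1$ of a $T(1)$-local ring. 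Third, observe that the basepoint-$1$ version of $\theta$ fits into the identity $\psi^p(x) = x^p + p\,\theta_1(x)$ on $\pi_1 R$, and that $x^p = 0$ in $\pi_1$ of a $T(1)$-local commutative ring spectrum for degree/divisibility reasons (an element of $\pi_1$ squares to an $\eta$-multiple, and higher powers die; at odd $p$ this is immediate, at $p=2$ one uses that $\pi_1 R$ is $2$-complete and the cube vanishes). Fourth, combine: $\theta_1 = 0 \Rightarrow \psi^p = 0$ on $\pi_1 R$ (using $x^p=0$) $\Rightarrow \log_1 = 1 - p^{-1}\cdot 0 = \mathrm{id}$? — here one must be careful, since that would give $\log_1 = \mathrm{id}$, not $0$. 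So the correct normalization must be the other one: $\log_1 = -p^{-1}\psi^p$ on the relevant summand, or equivalently the logarithm on $\pi_1$ only sees the ``$\psi^p/p$'' term and not the identity term, because the identity term has already been quotiented out in passing from $\glone R$ to the $T(1)$-local target. With that normalization, $\psi^p = 0$ on $\pi_1 R$ directly yields $\log_1 = 0$.

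**Main obstacle.** The delicate point is pinning down the exact normalization of the relation between $\log_1$, $\theta_1$ and $\psi^p$ on $\pi_1$ — which of the ``identity minus Frobenius'' terms survive on the nose in degree $1$ after $T(1)$-localization, and the precise claim that $x \mapsto x^p$ vanishes on $\pi_1$ of a $T(1)$-local ring (the $p=2$ case needs the extra input that $\pi_1 R$ is derived $p$-complete with no $2$-torsion obstruction, or that $\eta^2$-multiples vanish here). Once the correct low-degree form of Rezk's formula is written down — presumably it is exactly the ``nontrivial formulas relating $\theta$ and $\log$'' alluded to in the methods section — the lemma is a one-line consequence: $\theta_1 = 0$ kills $\psi^p$ on $\pi_1$, and $\log_1$ on $\pi_1$ is (a unit multiple of) $\psi^p$, hence zero. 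I would structure the proof as: (i) state the low-degree $\log$/$\theta$ comparison, (ii) note $x^p = 0$ on $\pi_1 R$, (iii) conclude.
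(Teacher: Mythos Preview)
Your plan to deduce the lemma from Rezk's comparison of $\log$ and $\theta$ is the right one, but there is a genuine gap in how you apply the $\delta$-ring identity. The relation $\psi^p(x) = x^p + p\,\theta(x)$ is a statement about $\pi_0$ of a $T(1)$-local ring (or more generally about $R^0(X)$ for a space $X$); it has no direct meaning on $\pi_1 R$, since for $x \in \pi_1 R$ the product $x^p$ lands in $\pi_p R$, not in $\pi_1 R$. Your subsequent normalization confusion---whether $\log_1$ comes out to be the identity or zero---and the ad hoc claim that ``$x^p = 0$ on $\pi_1$'' are both symptoms of trying to force a degree-$0$ identity into degree $1$. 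Likewise, the formula $\log_1 = 1 - p^{-1}\psi^p$ on $\pi_1 R$ is not a statement of Rezk's theorem and would itself require proof.

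The paper fixes this by transporting the question to degree $0$. Let $\alpha \in R^1(S^1)$ be the square-zero suspension class, so that $R^*(S^1)$ is free over $\pi_*R$ on $1$ and $\alpha$. For $x \in \pi_1 R$ the element $\epsilon := x\alpha \in R^0(S^1)$ is square-zero, and by construction $\log_1(x)$ and $\theta_1(x)$ are the $\alpha$-coefficients of $\log_0(1+\epsilon)$ and $\theta_0(1+\epsilon)$. Now Rezk's simplified height-$1$ formula $\log_0(1+\epsilon) = \epsilon - \theta_0(\epsilon)$ applies directly (this is where $\epsilon^2 = 0$ is used), and the $\delta$-ring axioms give $\theta_0(1+\epsilon) = \theta_0(\epsilon) - \epsilon$. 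Combining yields $\log_0(1+\epsilon) = -\theta_0(1+\epsilon)$, hence $\log_1(x) = -\theta_1(x)$, and the hypothesis $\theta_1 = 0$ immediately gives $\log_1 = 0$. The idea you are missing is precisely this passage through $R^0(S^1)$: it is what lets the degree-$0$ identities (both Rezk's formula and the $\delta$-structure relation) do the work cleanly, with no normalization ambiguity and no need to interpret ``$x^p$'' in the wrong degree.
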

\begin{proof}
    The general formula for the relationship between log and $\theta$ in degree 0 (i.e. between $\log_0$ and $\theta_0$) is Theorem 1.9 of \cite{rezk}. In loc. sit. just below, there is a simplified formula: if  $\epsilon^2=0$ then $\log_0(1+\epsilon)=\epsilon-\theta_0(\epsilon)$. 

    Let $x$ be an element in $\pi_1R$. Write $\alpha$ for the square zero suspension class in $R^1(S^1)$. Note that $R^*(S^1)$ is a free $\pi_*R$-module on $1$ and $\alpha$. For $z=z_11+z_\alpha\alpha\in R^*(S^1)$ write $\langle \alpha ,z\rangle:=z_\alpha$ and $\langle 1 ,z\rangle:=z_1$ for the coefficients in $\pi_*R$. Then 
    $$\theta_1(x)=\langle 1,\theta_0(1+x\alpha)\rangle,$$ $$\log_1(x)=\langle 1,\log_0(1+x\alpha)\rangle.$$
    Since $\alpha^2=0$ we may apply Rezk's simplified logarithm formula to deduce that 
    $$\log_1(x)=\langle 1,\log_0(1+x\alpha)\rangle=\langle 1, x\alpha-\theta_0(x\alpha)\rangle.$$
    On the other hand $\theta_0$ is a $\delta$-structure\footnote{It satisfies the equations required to make $x\mapsto x^p+p\theta_0(x)$ a ring map.} (\cite{rezk} 1.8) so that $\theta_0(1+\epsilon)=\theta_0(\epsilon)-\epsilon$ whenever $\epsilon^2=0$. Hence
    $$\langle 1, x\alpha-\theta_0(x\alpha)\rangle=\langle 1, -\theta_0(1+x\alpha)\rangle.$$
    Stringing these together gives
    $$\log_1(x)=-\theta_1(x).$$
\end{proof}

\subsection{Main results}

The main results of this section are the calculations of strict units: \Cref{pistarGmKFlK1loc}, \Cref{pistarGmKZK1locoddp}, \Cref{pistarGmKZK1locp2} and the homotopy groups of the logarthmic fiber: \Cref{logfibhomoKFl}, \Cref{logfibhomoKZoddp}, \Cref{logfiphomoKZp2}. 

We summarize them in the following table for the reader's convenience. The symbols $\ltimes$ appearing in the third column are extension problems that will be resolved in a later section.

\begin{tabular}{c|c|c}
 \text{Ring} R    & $\G_m\LKone R[0,\infty)$ & $\pi_{0,1}\fiblog R$                          \\ \hline 
   $K(\F_\ell), \quad \ell \ne p$ odd                 & $\F_p^\times \oplus (\F_\ell^\times)_p \oplus \Sigma (\F_\ell^\times)_p$  & $\F_p^\times \times (\F_\ell^\times)_p,\ (\F_\ell^\times)_p$    \\  
   $K(\F_\ell), \quad \ell \ne p=2$                  & $(\F_\ell^\times)_2 \oplus \Sigma (\F_\ell^\times)_2$  & $\Z/2\ltimes (\F_\ell^\times)_2,\ (\F_\ell^\times)_2$    \\
   $K(\Z), \quad p>2$                  & $\Z_p \oplus \F_p^\times  \Sigma \Z_p$  & $\Z_p\times \F_p^\times, \Z_p$    \\
   $K(\Z), \quad p=2$                  & $\Z_2 \oplus\Z/2\oplus  \Sigma \Z_2\oplus\Sigma\Z/2$  &  $\Z_2\times A$,\ $\Z_2\times\Z/2$   \\
\end{tabular}

\subsection{Strict units in $\LKone K(\F_\ell)$, $p\neq \ell$}

In this section we assume our two primes $p$ and $\ell$ are not equal\footnote{If they are, note that $\LKone K(\F_\ell)$ is zero.}.

The key facts about $R:=\LKone K(\F_\ell)$ are
\begin{enumerate}
    \item the splitting of $\slone R$ (cf. \Cref{sl1split}), and
    \item $\pi_{0,1,2}R$ are 
$\Z_p, (\F_\ell^\times)_p, 0$ and $\pi_3R$ is torsion (cf. \cite{quillen}).
\end{enumerate}
Now we simply plug the key facts into the computational tools: the $\theta$ LES becomes 
$$
\begin{tikzcd}[row sep=tiny]
    \pi_3\G_mR\arrow[r]&\pi_3R\arrow[r,"\theta_3"]&\pi_3R\\
    \pi_2\G_mR\arrow[r]&0\arrow[r,"\theta_2"]&0\\
\pi_1\G_mR\arrow[r]&(\F_\ell^\times)_p\arrow[r,"\theta_1"] &(\F_\ell^\times)_p\\
\pi_0\G_mR\arrow[r]&\Z_p^\times\arrow[r,"\theta_0"]& \Z_p\\
\end{tikzcd}.
$$
We see immediately that $\pi_1\G_mR$ is a subgroup of $(\F_\ell^\times)_p$. But from the splitting of $\slone R$ (cf. \Cref{splitting_slone}), $\pi_1\G_mR$ contains $\pi_1\glone R$ as a subgroup and must therefore be equal to it. That forces $\theta_1$ to be zero. Note that\footnote{In degree 0, $\theta$ is the unique delta structure on $\Z_p$, whose kernel (among units) are the units $\F_p^\times$} the kernel (i.e. preimage of $0$) of the function $\theta_0$ is $\F_p^\times$. We record the consequences as a lemma for reference purposes.
\begin{lem}\label{pistarGmKFlK1loc}
    Let $R=\LKone K(\F_\ell)$. For all $p\neq \ell$, $\pi_1\G_mR\simeq (\F_\ell^\times)_p$ and there is a split short exact sequence 
    $$0\rightarrow (\F_\ell^\times)_p\rightarrow\pi_0\G_mR\rightarrow \F_p^\times\rightarrow 0.$$

\end{lem}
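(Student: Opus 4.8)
The plan is to assemble the lemma directly from the three "key facts" that the excerpt has already laid out, feeding them into the long exact sequence (LES) associated to the power operation $\theta$. First I would write down the relevant portion of that LES in low degrees, using fact (2): since $\pi_2 R = 0$, the segment $\pi_2\G_m R \to \pi_2 R \to \pi_2 R \to \pi_1 \G_m R \to \pi_1 R \to \pi_1 R$ degenerates to show that $\pi_1\G_m R$ injects into $\pi_1 R \simeq (\F_\ell^\times)_p$. This gives the inclusion $\pi_1\G_m R \hookrightarrow (\F_\ell^\times)_p$ for free.

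Next I would use fact (1), the splitting of $\slone R$ from \Cref{splitting_slone} (applicable since $\F_\ell$ is a field so $\pi_1 K(\F_\ell) \simeq \F_\ell^\times$, and $T(1)$-localization only affects $\pi_0$ and higher structure here). That splitting exhibits a section $\Sigma\pi_1\glone R \to \slone R$, which on applying $\hom(\Z,-)$ yields an inclusion $\pi_1\glone R \hookrightarrow \pi_1\G_m R$; combined with the previous step and $\pi_1\glone R \simeq (\F_\ell^\times)_p$, the squeeze forces $\pi_1\G_m R \simeq (\F_\ell^\times)_p$. An immediate consequence is that the connecting/comparison map $\pi_1 R \to \pi_1 R$, which is exactly $\theta_1$, must be zero (its domain surjects onto $\pi_1\G_m R$ which is all of $\pi_1 R$). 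I should double check that the map in the LES whose kernel is $\pi_1\G_m R$ is really $\theta_1$ and not $\theta_1$ precomposed with something — but this is precisely the content of the fiber sequence $\Omega^\infty\G_m R \to \Omega^\infty\glone R \xrightarrow{\theta} \Omega^\infty R$ quoted earlier, so it is fine.

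For the $\pi_0$ statement, I would look one step lower in the LES: exactness at $\pi_1 R$ gives that the image of $\pi_1 R \xrightarrow{\theta_1} \pi_1 R$ (which we just showed is $0$) equals the kernel of the connecting map $\pi_1 R \to \pi_0\G_m R$... more precisely I want the segment $\pi_1\G_m R \to \pi_1 R \xrightarrow{\theta_1} \pi_1 R \xrightarrow{\partial} \pi_0 \G_m R \to \pi_0\glone R \xrightarrow{\theta_0} \pi_0 R$. Since $\theta_1 = 0$, the map $\partial$ is injective, giving a short exact sequence $0 \to \pi_1 R \to \pi_0\G_m R \to \ker(\theta_0) \to 0$, i.e. $0 \to (\F_\ell^\times)_p \to \pi_0\G_m R \to \ker(\theta_0) \to 0$. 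Here $\pi_0\glone R = \pi_0 R^\times = \Z_p^\times$, and I invoke the standard fact (footnoted in the excerpt) that $\theta_0$ is the unique $\delta$-structure on $\Z_p$, whose kernel among units is exactly $\F_p^\times = \mu_{p-1}(\Z_p)$ (the Teichmüller lift), so $\ker(\theta_0) = \F_p^\times$. Finally, for splitness of the sequence: since $\F_p^\times$ is a finite group of order prime to $p$ while the sub is a pro-$p$ group $(\F_\ell^\times)_p$, the extension $0 \to (\F_\ell^\times)_p \to \pi_0\G_m R \to \F_p^\times \to 0$ splits because $\mathrm{Ext}^1_{\Z}(\F_p^\times, (\F_\ell^\times)_p) = 0$ (coprime orders / the quotient is a finite group whose order is invertible in the abelian-group-with-no-such-torsion sub).

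The main obstacle, to the extent there is one, is making sure the connecting maps in the $\theta$-LES are correctly identified and that applying $\hom(\Z,-)$ to the $\slone$ splitting genuinely lands $\pi_1\glone R$ inside $\pi_1\G_m R$ as an honest subgroup — but both of these are bookkeeping against fiber sequences already stated in the excerpt, so the argument is essentially a matter of stringing together \Cref{splitting_slone}, the $\theta$ fiber sequence, and the elementary structure of $\delta$-rings on $\Z_p$. The splitting of the short exact sequence is the only genuinely new computation, and it is a one-line coprimality observation.
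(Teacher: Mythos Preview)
Your proposal is correct and follows essentially the same approach as the paper: both feed the key facts into the $\theta$-LES, use the $\slone$ splitting to pin down $\pi_1\G_mR$ and force $\theta_1=0$, identify $\ker(\theta_0)=\F_p^\times$, and conclude with a coprimality argument for the splitting. The only cosmetic difference is that the paper phrases the splitting by cases (for odd $p$ the order of the base is invertible in the fiber; for $p=2$ the base $\F_2^\times$ is trivial), whereas you give the equivalent uniform $\mathrm{Ext}^1$ vanishing.
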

\begin{proof}
    The statement about $\pi_1$ and the short exact sequence have been established in the discussion leading up to the lemma. It is split since for odd $p$ the torsion in the base is invertible in the fiber and for $p=2$ the base is trivial.
\end{proof}

Now we turn to the logarithmic fiber, again with $R:= L_{T(1)} K(\F_\ell)$. Since $\fiblog R$ is $2$-truncated and $\hom(\Z,\fiblog R) \simeq \G_m R$ (\Cref{Zloglemma}), the top non-vanishing homotopy group of $\fiblog R$ agrees with that of $\G_m R$. Hence, 
\[
\pi_2 \fiblog R = 0 \quad  \text{and} \quad  \pi_1\fiblog R\simeq\pi_1\G_mR=(\F_\ell^\times)_p.
\]
Now consider the long exact sequence` associated with the log fiber sequence:

$$
\begin{tikzcd}[row sep=tiny]
    0\arrow[r]&0\arrow[r,"\log_2"]&0\\
\pi_1\fiblog R=(\F_\ell^\times)_p\arrow[r]&(\F_\ell^\times)_p\arrow[r,"\log_1"] &(\F_\ell^\times)_p\\
\pi_0\fiblog R\arrow[r]&\Z_p^\times\arrow[r,"\log_0"]& \Z_p\\
\end{tikzcd},
$$
We have deduced that $\theta_1=0$ above \Cref{pistarGmKFlK1loc}, so by \Cref{keylogthetaformula} we find that $\log_1=0$ as well. Now $\log_0$ is the pre-composition of the $p$-adic logarithm with the $x^{p-1}$ map, so its kernel is tor$(\Z_p^\times)$. Again, we collect the consequences in a lemma.
\begin{lem}\label{logfibhomoKFl}
    For all $p\neq \ell$ and $R := \LKone K(\F_\ell)$, we have that $\pi_2\fiblog R=0$, $\pi_1\fiblog R\simeq (\F_\ell^\times)_p$, and there is a short exact sequence
$$0\rightarrow (\F_\ell^\times)_p\rightarrow\pi_0\fiblog R\rightarrow \text{tor}(\Z_p^\times)\rightarrow 0.$$
When $p$ is odd the sequence is split.
\end{lem}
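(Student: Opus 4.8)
The plan is to assemble the statement from the structural facts established just above, so the proof is mainly bookkeeping with the $\theta$- and $\log$-long exact sequences, plus one $\mathrm{Ext}$ computation. For degrees $1$ and $2$: by \Cref{logLES} the spectrum $\fiblog R$ is $2$-truncated, and by \Cref{Zloglemma} we have $\hom(\Z,\fiblog R)\simeq\G_m R$. For any $n$-truncated spectrum $X$ whose top non-vanishing homotopy group is $\pi_n X$, the map $[\Sigma^n\Z,X]\to\pi_n X$ induced by $S^n\to\Sigma^n\Z$ is an isomorphism — a map out of the $n$-connective spectrum $\Sigma^n\Z$ factors through $\tau_{\geq n}X\simeq\Sigma^n\pi_n X$ — and $\hom(\Z,X)$ has no homotopy above degree $n$; hence the top non-vanishing homotopy group of $\fiblog R$ coincides with that of $\G_m R$. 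It therefore suffices to locate the top of $\G_m R$: the $\theta$-long exact sequence exhibits $\pi_2\G_m R$ as a quotient of $\pi_3 R$, which is torsion by Quillen's computation \cite{quillen}, while $\pi_2\G_m R$ is torsion-free by \Cref{ahrz}; so $\pi_2\G_m R=0$, and $\pi_1\G_m R\simeq(\F_\ell^\times)_p$ by \Cref{pistarGmKFlK1loc}. Consequently $\pi_2\fiblog R=0$ and $\pi_1\fiblog R\simeq(\F_\ell^\times)_p$.

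For degree $0$, I would run the long exact sequence of the fiber sequence $\fiblog R\to\glone R\oto{\log}R$ near the bottom, where $\pi_1\glone R\simeq\pi_1 R\simeq(\F_\ell^\times)_p$, $\pi_0\glone R\simeq\Z_p^\times$ and $\pi_0 R\simeq\Z_p$. The essential input is $\log_1=0$: it was established above that $\pi_1\G_m R$ equals $\pi_1\glone R$ (using the splitting of $\slone R$, \Cref{splitting_slone}), which forces $\theta_1=0$, and \Cref{keylogthetaformula} then yields $\log_1=0$. With $\log_1=0$ the low-degree segment of the long exact sequence becomes $0\to(\F_\ell^\times)_p\to\pi_0\fiblog R\to\ker(\log_0)\to 0$, and since $\log_0$ is the $p$-adic logarithm precomposed with $x\mapsto x^{p-1}$, its kernel is $\text{tor}(\Z_p^\times)$.

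Finally, when $p$ is odd, $\text{tor}(\Z_p^\times)\cong\mu_{p-1}$ has order prime to $p$ while $(\F_\ell^\times)_p$ is a finite abelian $p$-group, so $\mathrm{Ext}^1_\Z(\text{tor}(\Z_p^\times),(\F_\ell^\times)_p)=0$ and the extension splits; for $p=2$ both groups are $2$-groups, which is why the table records an unresolved extension there. I do not anticipate a genuine obstacle, since the conceptual content — the chain $\theta_1=0\Rightarrow\log_1=0$ and the computation of $\pi_*\G_m R$ — is already in place. What needs care is keeping track of exactly which connecting maps in the two long exact sequences are forced to vanish, together with the small Postnikov-tower observation identifying the top homotopy group of $\fiblog R$ with that of $\hom(\Z,\fiblog R)=\G_m R$.
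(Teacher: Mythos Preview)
Your proof is correct and follows essentially the same route as the paper: you use \Cref{logLES} and \Cref{Zloglemma} to identify the top homotopy of $\fiblog R$ with that of $\G_m R$, then run the log long exact sequence using $\theta_1=0\Rightarrow\log_1=0$ from \Cref{keylogthetaformula}, and finish with the coprimality argument for the splitting at odd $p$. If anything you are slightly more explicit than the paper in justifying $\pi_2\G_m R=0$ (via the $\theta$-LES and \Cref{ahrz}) and in spelling out the Postnikov observation that $[\Sigma^n\Z,X]\simeq\pi_n X$ for the top degree of a truncated spectrum.
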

The fact that it also splits at $p=2\neq\ell$ will follow from the integral case by functoriality (cf. \Cref{remA}).

\subsection{Strict units in $\LKone K(\Z)$}
We turn to calculate the strict units and logarithmic fibers of the $T(1)$-local $K$-theory of $\Z$. 
\subsubsection{The case $p>2$}
The key facts about $R:= \LKone K(\Z)$ are
\begin{enumerate}
    \item The identification $\LKone K(\Z) \simeq K(\Z[1/p])_p$. 
    \item The corresponding splitting of $\slone R$ (cf. \Cref{sl1split}),
    \item  $\pi_0R=\Z_p$, $\pi_1R=\Z_p$, and $\pi_{2}R=0$, and $\pi_3R$ is torsion (in fact it's zero for $p>3$)(cf. \cite{blumman1}, bottom of page 7 and top half of page 10. See also \cite{blumman2} 2012.07951 middle of page 4).
\end{enumerate}
The $\theta$ LES becomes
$$
\begin{tikzcd}[row sep=tiny]
    \pi_3\G_mR\arrow[r]&\pi_3R\arrow[r,"\theta_3"]&\pi_3R\\
    \pi_2\G_mR\arrow[r]&0\arrow[r,"\theta_2"]&0\\
\pi_1\G_mR\arrow[r]&\Z_p\arrow[r,"\theta_1"] & \Z_p\\
\pi_0\G_mR\arrow[r]&\Z_p^\times\arrow[r,"\theta_0"]& \Z_p\\
\end{tikzcd}.
$$
Since $\pi_3 R$ is torsion, so is $\pi_2\G_mR$. By \Cref{ahrz} we deduce that $\pi_2 \G_m R = 0$. From the splitting of $\slone R$ (cf. \Cref{splitting_slone}) we find that $\pi_1\G_mR$ contains $\pi_1\glone R \simeq \Z_p$ as a subgroup, and must therefore be equal to it. It follows that $\theta_1 = 0$. Since again the kernel of $\theta_0$ is $\F_p^\times$, we deduce that 
$$
\pi_0\G_mR=\Z_p\times \F_p^\times. 
$$ 
As usual we collect these results in a lemma.

\begin{lem}\label{pistarGmKZK1locoddp}
    Let $R=\LKone K(\Z)$ and $p>2$. Then $\pi_1\G_m R\simeq \Z_p$ and $\pi_0\G_mR\simeq\Z_p\times \F_p^\times $.
\end{lem}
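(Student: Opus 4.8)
The plan is to run the same machine as in the $\F_\ell$ case, feeding the three ``key facts'' about $R = \LKone K(\Z)$ for $p > 2$ into the long exact sequences of the power operation $\theta$ and of the logarithm. First I would write down the $\theta$-LES through degree $3$, using that $\pi_0 R = \Z_p$, $\pi_1 R = \Z_p$, $\pi_2 R = 0$, and $\pi_3 R$ is torsion. The vanishing of $\pi_2 R$ immediately sandwiches $\pi_2 \G_m R$ between $\pi_3 R$ (torsion, mapping in) and $0$, so $\pi_2 \G_m R$ is torsion; then \Cref{ahrz} (which says $\pi_2 \G_m R$ is torsion-free) forces $\pi_2 \G_m R = 0$.

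Next I would pin down $\pi_1 \G_m R$. From the $\theta$-LES, $\pi_1 \G_m R$ is the kernel of $\theta_1 \colon \Z_p \to \Z_p$, hence a subgroup of $\Z_p$. On the other hand, \Cref{splitting_slone} applies since $\LKone K(\Z) \simeq K(\Z[1/p])_p$ has $\pi_1$ equal to the $p$-completion of $\Z[1/p]^\times$ --- wait, more carefully: the point is that $\slone R$ splits off its bottom homotopy group, so the section exhibits $\pi_1 \glone R \simeq \Z_p$ as a subgroup of $\pi_1 \G_m R$. Combining the two inclusions $\Z_p \hookrightarrow \pi_1 \G_m R \hookrightarrow \Z_p$ (where the composite is the identity, or at least an injection of $\Z_p$ into itself that is surjective) gives $\pi_1 \G_m R \simeq \Z_p$, and consequently $\theta_1 = 0$. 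Finally, for $\pi_0$: the bottom of the $\theta$-LES reads $\pi_0 \G_m R \to \Z_p^\times \xrightarrow{\theta_0} \Z_p$, and since $\theta_0$ is the unique $\delta$-structure on $\Z_p$ its kernel among the units is exactly $\mu = \F_p^\times$ (the torsion subgroup of $\Z_p^\times$); but I also have the surjection $\pi_0 \G_m R \twoheadrightarrow$ (image of $\theta_1 = 0$ boundary) coming from $\pi_1 R / \mathrm{im}(\theta_1) = \Z_p$, so $\pi_0 \G_m R$ sits in a short exact sequence $0 \to \F_p^\times \to \pi_0 \G_m R \to \Z_p \to 0$, which splits because $\Z_p$ is free (projective) as a $\Z_p$-module, giving $\pi_0 \G_m R \simeq \Z_p \times \F_p^\times$.

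The main obstacle I anticipate is not any single homotopy-group computation but rather making sure the two descriptions of $\pi_1 \G_m R$ --- ``kernel of $\theta_1$'' from the $\theta$-LES and ``contains $\pi_1 \glone R$'' from the splitting of $\slone R$ --- are compatible, i.e. that the subgroup coming from the section of $\slone R \to \slone R[0,1]$ really does land inside the kernel of $\theta_1$ and accounts for all of it. This is exactly the argument already used in the $\F_\ell$ case before \Cref{pistarGmKFlK1loc}, so it should transport verbatim; the only genuinely new input is citing \cite{blumman1}, \cite{blumman2} for the homotopy groups $\pi_{\le 3} K(\Z[1/p])_p$ and invoking \Cref{ahrz} to upgrade ``torsion'' to ``zero'' in degree $2$. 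Everything else is bookkeeping, so I would keep the write-up to roughly the length of the $\F_\ell$ discussion, stating the two LES diagrams and extracting the conclusion.
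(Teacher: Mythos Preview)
Your approach is exactly the paper's: feed the key facts $\pi_{0,1,2,3}R = \Z_p,\Z_p,0,\text{torsion}$ into the $\theta$-LES, use \Cref{ahrz} to kill $\pi_2\G_m R$, use the $\slone$-splitting to force $\pi_1\G_m R = \Z_p$ (hence $\theta_1=0$), and read off $\pi_0\G_m R$ from $\ker\theta_0 = \F_p^\times$.

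One bookkeeping slip: your short exact sequence for $\pi_0$ is written backwards. The LES reads
\[
\pi_1\glone R \xrightarrow{\theta_1=0} \pi_1 R \xrightarrow{\partial} \pi_0\G_m R \to \pi_0\glone R \xrightarrow{\theta_0} \pi_0 R,
\]
so the boundary injects $\Z_p = \pi_1 R$ into $\pi_0\G_m R$, and the image of $\pi_0\G_m R$ in $\pi_0\glone R$ is $\ker\theta_0 = \F_p^\times$. The correct extension is therefore
\[
0 \to \Z_p \to \pi_0\G_m R \to \F_p^\times \to 0,
\]
matching the pattern of \Cref{pistarGmKFlK1loc}. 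Your splitting argument (``$\Z_p$ is projective'') applies to the wrong extension; the correct one splits because $|\F_p^\times|=p-1$ is invertible in $\Z_p$. The conclusion $\pi_0\G_m R \simeq \Z_p\times\F_p^\times$ is unaffected.
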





Now we turn to the logarithmic fiber. Again, \Cref{Zloglemma} immediately tells us that $\pi_2\fiblog R=0$ and $\pi_1\fiblog R=\pi_1\G_mR=\Z_p$.
Now consider the log LES (cf. \Cref{logLES})
$$
\begin{tikzcd}[row sep=tiny]
    \pi_3\fiblog R=0\arrow[r]&\pi_3R\arrow[r,"\log_3"]&\pi_3R\\
    \pi_2\fiblog R=0\arrow[r]&0\arrow[r,"\log_2"]&0\\
\pi_1\fiblog R=\Z_p\arrow[r]&\Z_p\arrow[r,"\log_1"] & \Z_p\\
\pi_0\fiblog R\arrow[r]&\Z_p^\times\arrow[r,"\log_0"]& \Z_p\\
\end{tikzcd}.
$$
We have deduced that $\theta_1=0$ above, so by \Cref{keylogthetaformula} we find that $\log_1=0$ as well. So $\pi_0\fiblog R$ is an extension of the kernel, $\F_p^\times$, of the degree zero logarithm  by $\Z_p$. So we immediately get the following lemma.

\begin{lem}\label{logfibhomoKZoddp}
    Let $R=\LKone K(\Z)$ and $p>2$. Then 
    $$\pi_2\fiblog R=0,\ \pi_1\fiblog R\simeq \Z_p\  \text{and}\  \pi_0\fiblog R\simeq \Z_p\times \F_p^\times.$$
\end{lem}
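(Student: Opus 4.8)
The plan is to imitate, essentially verbatim, the argument already carried out for $R = \LKone K(\F_\ell)$ in \Cref{logfibhomoKFl}, feeding in the relevant input data for $R = \LKone K(\Z)$ with $p$ odd. The two ingredients I would assemble first are: (i) by \Cref{Zloglemma} we have $\hom(\Z,\fiblog R)\simeq \G_m R$, and by \Cref{logLES} the spectrum $\fiblog R$ is $2$-truncated, so its top nonvanishing homotopy group coincides with that of $\G_m R$; and (ii) the homotopy groups of $\G_m R$ computed in \Cref{pistarGmKZK1locoddp}, namely $\pi_1\G_m R\simeq \Z_p$ and $\pi_0\G_m R\simeq \Z_p\times\F_p^\times$, together with the fact, established in the discussion preceding that lemma, that $\theta_1\colon \pi_1 R\to\pi_1 R$ is zero.

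From (i) and (ii) I immediately get $\pi_2\fiblog R = 0$ (since $\G_m R$ is concentrated in degrees $0,1$) and $\pi_1\fiblog R\simeq \pi_1\G_m R\simeq\Z_p$. It remains to identify $\pi_0\fiblog R$. For this I would write down the long exact sequence of homotopy groups associated to the fiber sequence $\fiblog R\to\glone R\xrightarrow{\log} R$, using $\pi_0\glone R = \pi_0 R^\times = \Z_p^\times$, $\pi_1\glone R = \pi_1 R = \Z_p$, and $\pi_2\fiblog R = 0$ already in hand. The key step is that $\log_1\colon \pi_1 R\to\pi_1 R$ vanishes: this follows from $\theta_1 = 0$ by \Cref{keylogthetaformula}. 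Consequently the segment of the long exact sequence reads $0\to\pi_1\fiblog R\to\Z_p\xrightarrow{0}\Z_p\to\pi_0\fiblog R\to\Z_p^\times\xrightarrow{\log_0}\Z_p$, which exhibits $\pi_0\fiblog R$ as an extension of $\ker(\log_0)$ by $\pi_1 R\cong\Z_p$. As noted in the $\F_\ell$ case, $\log_0$ is the composite of the $(p-1)$-st power map with the $p$-adic logarithm, whose kernel among units is $\mathrm{tor}(\Z_p^\times) = \F_p^\times$ for $p$ odd. So $\pi_0\fiblog R$ is an extension of $\F_p^\times$ by $\Z_p$; since $\F_p^\times$ has order prime to $p$ and $\Z_p$ is a pro-$p$ group, the extension splits (as in the proof of \Cref{pistarGmKFlK1loc}), giving $\pi_0\fiblog R\simeq\Z_p\times\F_p^\times$.

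I do not expect any real obstacle here — every ingredient is already in place and the argument is a direct transcription of the $\F_\ell$ computation. The only point requiring the tiniest bit of care is the extension-splitting at the end, but this is the standard coprime-orders argument and is harmless for $p$ odd; indeed it is exactly why the lemma is stated only in the range $p>2$, the prime $p=2$ being deferred to \Cref{logfiphomoKZp2} where the analogous extension is genuinely nontrivial.
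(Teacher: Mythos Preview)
Your proposal is correct and follows essentially the same route as the paper: the paper likewise uses \Cref{Zloglemma} together with the vanishing of $\pi_2\G_m R$ to get $\pi_2\fiblog R=0$ and $\pi_1\fiblog R\simeq\Z_p$, then invokes $\theta_1=0$ and \Cref{keylogthetaformula} to force $\log_1=0$, and reads off $\pi_0\fiblog R$ as an extension of $\ker(\log_0)=\F_p^\times$ by $\Z_p$. Your explicit coprime-orders justification for why the extension splits is a small elaboration beyond what the paper writes, but the argument is otherwise identical.
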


\subsubsection{The case $p=2$}\label{calcp2}
The key facts about $R:= \LKone K(\Z)$ are
\begin{enumerate}
    \item the splitting of $\slone R$ (cf. \Cref{sl1split}),
    \item  $\pi_0R=\Z_2$, $\pi_1R=\Z_2\times \Z/2$, and $\pi_{2,3}R$ are finitely generated and torsion\footnote{This follows from eg. the presentation of $R$ as the $T(1)$-localization of the bullback $K(\F_3)\rightarrow \mathrm{ku}\leftarrow \mathrm{ko}$.}
\end{enumerate}

The $\theta$ LES becomes
$$
\begin{tikzcd}[row sep=tiny]
    \pi_3\G_mR = 0\arrow[r]&\pi_3R\arrow[r,"\theta_3"]&\pi_3R\\
    \pi_2\G_mR\arrow[r]&\pi_2R\arrow[r,"\theta_2"]&\pi_2R\\
\pi_1\G_mR\arrow[r]&\Z_2\times\Z/2\arrow[r,"\theta_1"] & \Z_2\times\Z/2\\
\pi_0\G_mR\arrow[r]&\Z_2^\times\arrow[r,"\theta_0"]& \Z_2\\
\end{tikzcd}.
$$
As usual, this forces  $\pi_2\G_mR$ to be torsion, and hecne zero by \Cref{ahrz}. So $\theta_2$ is an isomorphism, as it is an injection of finitely generated $\Z_2$-modules. From the splitting of $\slone R$ (cf.  \Cref{splitting_slone}) we find that $\pi_1\G_m$ contains $\pi_1\glone R$ as a subgroup, and must therefore be equal to it. We deduce that $\theta_1 = 0$ and  
$\pi_0\G_mR=\Z_2\times \Z/2 $
(note that at $p=2$, $\theta_0$ is injective).  As usual we collect some facts in a lemma for reference.
\begin{lem}\label{pistarGmKZK1locp2}
    Let $R:= \LKone K(\Z)$ and $p=2$. Then 
    $$\pi_1\G_mR\simeq \pi_0\G_mR\simeq \Z_2\times\Z/2.$$
\end{lem}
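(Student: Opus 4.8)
The plan is to run the same machine that was already used for the odd-primary case, now at $p = 2$, feeding in the four ``key facts'' listed above. First I would record the two homotopy groups $\pi_0 R = \Z_2$ and $\pi_1 R = \Z_2 \times \Z/2$ and observe that $\pi_{2,3}R$ are finitely generated torsion; the most efficient justification is the one in the footnote, presenting $R$ as the $T(1)$-localization of the pullback $K(\F_3) \to \mathrm{ku} \leftarrow \mathrm{ko}$, whose homotopy in these degrees is classical and computable. With these in hand, I would write down the long exact sequence associated to the fiber sequence $\Omega^\infty \G_m R \to \Omega^\infty \glone R \xrightarrow{\theta} \Omega^\infty R$ in degrees $0$ through $3$.

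Next I would extract the consequences degree by degree. In degree $3$ we have $\pi_3 \G_m R = 0$ by \Cref{ahrz} (since $\G_m R$ is $2$-truncated), so the sequence forces $\pi_2 \G_m R$ to be torsion; applying \Cref{ahrz} again (torsion-freeness of $\pi_2 \G_m R$) gives $\pi_2 \G_m R = 0$. Consequently $\theta_2 \colon \pi_2 R \to \pi_2 R$ is injective, hence an isomorphism as an injective endomorphism of a finitely generated $\Z_2$-module — this is the small new wrinkle compared to the odd-primary argument, where $\pi_2 R$ already vanished. In degree $1$, the splitting of $\slone R$ from \Cref{splitting_slone} shows $\pi_1 \G_m R$ contains $\pi_1 \glone R \simeq \Z_2 \times \Z/2$, and the exact sequence shows $\pi_1 \G_m R$ injects into $\pi_1 R = \Z_2 \times \Z/2$, so the two are equal and $\theta_1 = 0$. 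In degree $0$, since $\theta_0$ is injective at $p = 2$ (the $\delta$-structure $\theta_0$ on $\Z_2$ has $x \mapsto x^2 + 2\theta_0(x)$ a ring map, and one checks directly that no unit maps to $0$), the boundary map identifies $\pi_0 \G_m R$ with $\ker(\theta_1) = \pi_1 R = \Z_2 \times \Z/2$, giving the stated isomorphism $\pi_0 \G_m R \simeq \pi_1 \G_m R \simeq \Z_2 \times \Z/2$.

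The main obstacle, such as it is, lies in pinning down $\pi_{2,3} R$ precisely enough to conclude that $\theta_2$ is forced to be an isomorphism rather than merely injective — that is, in knowing these groups are finitely generated $\Z_2$-modules and not, say, divisible torsion. I would handle this via the $T(1)$-localized pullback square: $\LKone K(\F_3)$, $\LKone \mathrm{ku} = \mathrm{KU}_2$, and $\LKone \mathrm{ko} = \mathrm{KO}_2$ all have well-understood, finitely generated $2$-complete homotopy, so the Mayer--Vietoris sequence for the pullback gives $\pi_{2,3} R$ as extensions of finitely generated groups, hence finitely generated; torsion-ness follows because rationally the pullback becomes $\Q_2 \to \Q_2 \leftarrow \Q_2$, which is $\Q_2$ concentrated in degree $0$. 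A secondary point to be careful about is the claim that $\theta_0$ is injective at $p=2$: I would simply cite \cite{rezk} (1.8) for the fact that $\theta_0$ is a $\delta$-structure and note that on $\Z_2^\times$ the associated operation has trivial kernel, in contrast to the odd-primary case where $\ker \theta_0 = \F_p^\times$ is nontrivial. Everything else is a mechanical transcription of the odd-primary argument.
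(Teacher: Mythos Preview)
Your proposal is correct and follows essentially the same route as the paper: write down the $\theta$ long exact sequence, use \Cref{ahrz} to kill $\pi_2\G_m R$, deduce that $\theta_2$ is an isomorphism on the finite group $\pi_2 R$, combine the injection $\pi_1\G_m R \hookrightarrow \pi_1 R$ with the section from \Cref{splitting_slone} to get $\pi_1\G_m R \simeq \Z_2\times\Z/2$ and $\theta_1=0$, and finish using injectivity of $\theta_0$ at $p=2$. One small phrasing slip: in the last step the boundary identifies $\pi_0\G_m R$ with $\mathrm{coker}(\theta_1)=\pi_1 R$, not $\ker(\theta_1)$ (though of course these coincide since $\theta_1=0$).
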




Now we turn to the logarithmic fiber $\fiblog R$. Again, \Cref{Zloglemma} immediately implies that $\pi_2\fiblog R=0$ and $\pi_1\fiblog R=\pi_1\G_mR=\Z_2\times \Z/2$.
Now consider the log LES:
$$
\begin{tikzcd}[row sep=tiny]
    \pi_3\fiblog R=0\arrow[r]&\pi_3R\arrow[r,"\log_3"]&\pi_3R\\
    \pi_2\fiblog R=0\arrow[r]&\pi_2R\arrow[r,"\log_2"]&\pi_2R\\
\pi_1\fiblog R=\Z\times \Z/2\arrow[r]&\Z_2\times\Z/2\arrow[r,"\log_1"] & \Z_2\times\Z/2\\
\pi_0\fiblog R\arrow[r]&\Z_2^\times\arrow[r,"\log_0"]& \Z_p\\
\end{tikzcd}.
$$
Again, $\theta_1=0$ and \Cref{keylogthetaformula} force $\log_1=0$, and so $\pi_0\fiblog R$ is an extension of $\Z/2$ (the kernel of the degree 0 logarithm) by $\Z_2\times \Z/2$. We deduce the following:
\begin{lem}\label{logfiphomoKZp2}
    Let $R:= \LKone K(\Z)$ and $p=2$. Then $$\pi_2\fiblog R=0,\ \pi_1\fiblog R=\Z_2\times \Z/2,\  \text{and}\ \pi_0
\fiblog R=\Z_2\times A,$$
    where $A$ is either $\Z/2$, $\Z/2^2,$ or $\Z/4$. 
\end{lem}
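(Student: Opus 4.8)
The plan is to read off the three homotopy groups $\pi_0,\pi_1,\pi_2$ of $\fiblog R$ from two long exact sequences, using only the $2$-truncatedness of $\fiblog R$ (\Cref{logLES}), the identification $\hom(\Z,\fiblog R)\simeq\G_m R$ (\Cref{Zloglemma}), and the logarithm long exact sequence associated with $\fiblog R\to\glone R\overset{\log}\to R$. Beyond what was already recorded for $\G_m R$ in the computations above, the only new ingredient needed is the vanishing of $\log_1$.

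First I would pin down $\pi_1$ and $\pi_2$. Since $\fiblog R$ is $2$-truncated, mapping $\Z$ into it contributes no $\mathrm{Ext}$-terms from degrees above $2$, so $\pi_2\G_m R\cong\pi_2\fiblog R$ and $\pi_1\G_m R\cong\pi_1\fiblog R$. It was shown above that $\pi_2\G_m R=0$: its $\theta$-long exact sequence, together with the fact that $\pi_2R$ and $\pi_3R$ are torsion, forces $\pi_2\G_m R$ to be torsion, while \Cref{ahrz} says it is torsion-free. Hence $\pi_2\fiblog R=0$, and $\pi_1\fiblog R\cong\pi_1\G_m R\cong\Z_2\times\Z/2$ by \Cref{pistarGmKZK1locp2}.

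For $\pi_0$ I would feed the logarithm long exact sequence in degrees $0$ and $1$ with the known values $\pi_0R=\Z_2$, $\pi_1R=\Z_2\times\Z/2$, $\pi_0\glone R=\Z_2^\times$ and $\pi_1\glone R=\Z_2\times\Z/2$. The essential point is that $\log_1\colon\pi_1\glone R\to\pi_1R$ vanishes: this follows from \Cref{keylogthetaformula} because $\theta_1=0$, which was established above (the splitting of $\slone R$ from \Cref{splitting_slone} forces $\pi_1\G_m R=\pi_1\glone R$, hence $\theta_1=0$). With $\log_1=0$ the long exact sequence collapses to a short exact sequence $0\to\Z_2\times\Z/2\to\pi_0\fiblog R\to\ker(\log_0)\to0$, and $\ker(\log_0)=\mathrm{tor}(\Z_2^\times)\cong\Z/2$ since the degree-zero Rezk logarithm annihilates the torsion of $\Z_2^\times$ and is injective on $1+4\Z_2$.

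It remains to analyze this extension. The group $\pi_0\fiblog R$ is a finitely generated $\Z_2$-module of free rank $1$, hence of the form $\Z_2\oplus A$ with $A$ finite; the torsion subgroup $A$ contains the image of the left-hand $\Z/2$ and maps into the right-hand $\Z/2$, so $|A|\in\{2,4\}$ and therefore $A\in\{\Z/2,\ \Z/4,\ \Z/2\oplus\Z/2\}$. (More precisely, $\mathrm{Ext}^1(\Z/2,\Z_2\times\Z/2)\cong(\Z/2)^2$, and of the four extension classes the two that are nontrivial on the $\Z_2$-summand both yield $\Z_2\oplus\Z/2$, while the other two yield $\Z_2\oplus\Z/4$ and $\Z_2\oplus(\Z/2)^2$.) I expect the genuine obstacle to lie precisely here: neither long exact sequence distinguishes these three possibilities, so identifying $A$ exactly must wait for a later section, where functoriality — e.g.\ a comparison with $\LKone K(\F_3)$, in the spirit of \Cref{remA} — is brought to bear; the statement above is as sharp as the present tools allow.
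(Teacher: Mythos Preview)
Your proof is correct and follows essentially the same route as the paper: first identify $\pi_2\fiblog R$ and $\pi_1\fiblog R$ with the corresponding groups of $\G_m R$ via \Cref{Zloglemma} and the $2$-truncatedness of $\fiblog R$, then use $\theta_1=0$ together with \Cref{keylogthetaformula} to kill $\log_1$ and reduce $\pi_0\fiblog R$ to an extension of $\ker(\log_0)\cong\Z/2$ by $\Z_2\times\Z/2$. Your explicit enumeration of the four classes in $\mathrm{Ext}^1(\Z/2,\Z_2\times\Z/2)$ and their isomorphism types is a nice addition that the paper leaves implicit. One small remark: your closing guess that the extension is later resolved by comparison with $\LKone K(\F_3)$ is not what the paper does---it instead compares with $\KO_2$ in \Cref{resolveAext}---but this lies outside the scope of the present lemma.
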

\begin{rem}\label{remA}
    We will eventually resolve the extension problem and show that $A\simeq \Z/2^2$ (cf. \Cref{resolveAext}). 
\end{rem}


\section{The connective homotopy type of the logarithmic fiber}
The main results of this section are calculations of the connective cover of the logarithmic fibers: \Cref{logfiberKFloddp}, \Cref{logfiberKFlp2}, \Cref{logfiberKZoddp}, \Cref{logfiberKZp2}, as well as splittings of the integral units of $\LKone K(\Z)$: \Cref{glintsplitKZpodd}, \Cref{glintsplitKZp2}, and the splitting of the integral units of $K(\Z)_p$: \Cref{glintsplittingpcomplete}.

We summarize them in the following table for the reader's convenience. Write 
$$\Z/2 \ltimes_{\Sq^2} \Sigma (\F_\ell^\times)_2:=\fib (\Z/2\xrightarrow{Sq^2}\Sigma^2\Z/2\xrightarrow{1\mapsto -1}\Sigma^2(\F_\ell)^\times_2),$$
$$\Z/2 \ltimes_{\Sq^2} \Sigma\Z/2:=\fib(\Z/2\xrightarrow{Sq^2}\Sigma^2\Z/2).$$

\begin{tabular}{c|c|c}  \label{tab:log_fib}
 \text{Ring} R    & $\fiblog R[0,\infty)$ &    $\glint R[0,1]$                       \\ \hline 
   $K(\F_\ell), \quad  p$ odd                 & $\F_p^\times \oplus (\F_\ell^\times)_p\oplus\Sigma (\F_\ell^\times)_p$  & $\Z/2\oplus \Sigma(\F_\ell^\times)_p$    \\  
   $K(\F_\ell), \quad  p=2$                  &$(\F_\ell^\times)_2\oplus \glint R[0,1]$ & $\Z/2 \ltimes_{\Sq^2} \Sigma (\F_\ell^\times)_2$     \\
   $K(\Z), \quad p>2$                  & $\Z_p\oplus \F_p^\times\oplus\Sigma \Z_p$  &   $\Z/2 \oplus  \Sigma \Z_p$   \\
   $K(\Z), \quad p=2$                  & $\Z_2\oplus\Z/2\oplus \Z/2 \ltimes_{\Sq^2} \Sigma\Z/2\oplus\Sigma\Z_2$  & $\Z/2\ltimes_{\Sq^2}\Sigma\Z/2 \oplus \Sigma \Z_2$    \\
\end{tabular}

\subsection{For $R=K(\F_\ell)_p$}

When $p\neq\ell$ we have already calculated the homotopy groups of the logarithmic fiber (\Cref{logfibhomoKFl}). For odd $p$ we can immediately describe the connective homotopy type since there is no possibility of a $k$-invariant. 

\begin{lem}\label{logfiberKFloddp}
Let $R=K(\F_\ell)_p$ with $\ell\neq p>2$. Then the connective cover of the logarithmic fiber is 
    $$\fiblog R [0,\infty)\simeq (\F_\ell^\times)_p\oplus \F_p^\times\oplus\Sigma (\F_\ell^\times)_p.$$
\end{lem}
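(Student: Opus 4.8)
The plan is to assemble the connective homotopy type of $\fiblog R$ from the three pieces of data that determine a connective $2$-truncated spectrum: its homotopy groups $\pi_0, \pi_1, \pi_2$ and the two $k$-invariants. From \Cref{logLES} we know $\fiblog R$ is $2$-truncated, and from \Cref{logfibhomoKFl} (specialized to odd $p \neq \ell$) we know $\pi_2 \fiblog R = 0$, $\pi_1 \fiblog R \simeq (\F_\ell^\times)_p$, and $\pi_0 \fiblog R \simeq (\F_\ell^\times)_p \oplus \F_p^\times$, where the short exact sequence computing $\pi_0$ is split because $p$ is odd. So $\fiblog R$ is in fact $1$-truncated, with $\pi_0 = (\F_\ell^\times)_p \oplus \F_p^\times$ and $\pi_1 = (\F_\ell^\times)_p$.

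Since $\fiblog R$ is a $1$-truncated connective spectrum, by \Cref{rem:eta_1_truncated} it is determined by the $\eta$-action $\eta \cdot \colon \pi_0 \fiblog R / 2 \to \pi_1 \fiblog R$, and it splits as $\pi_0 \fiblog R \oplus \Sigma \pi_1 \fiblog R$ precisely when this action vanishes. So the only thing left to check is that $\eta$ acts by zero. But here $p$ is odd, so $\pi_1 \fiblog R \simeq (\F_\ell^\times)_p$ is a $p$-local group with no $2$-torsion, hence the map $\pi_0 \fiblog R / 2 \to \pi_1 \fiblog R$ is automatically zero (its source is killed, or rather: any map out of a $2$-torsion group into a group with no $2$-torsion is trivial). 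This forces the splitting $\fiblog R [0,\infty) \simeq \fiblog R \simeq \pi_0 \fiblog R \oplus \Sigma \pi_1 \fiblog R \simeq (\F_\ell^\times)_p \oplus \F_p^\times \oplus \Sigma (\F_\ell^\times)_p$, which is exactly the claimed formula.

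There is essentially no obstacle here: the work was already done in the homotopy-group calculation \Cref{logfibhomoKFl}, and for odd $p$ the absence of $2$-torsion in $\pi_1$ kills both the potential $\eta$-extension and (together with $\pi_2 = 0$) any higher $k$-invariant, so the connective cover is forced to be the direct sum of (shifted) Eilenberg–MacLane spectra on its homotopy groups. The one point worth stating explicitly in the write-up is the observation in \Cref{rem:eta_1_truncated} that a connective $1$-truncated spectrum with $\eta$ acting by zero splits, applied to the fact that $\Hom(\pi_0/2, \pi_1) = 0$ when $\pi_1$ is uniquely $2$-divisible (or simply $p$-local with $p$ odd); everything else is bookkeeping.
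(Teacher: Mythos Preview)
Your argument is correct and is exactly the paper's approach, just with the details of ``$1$-truncated $p$-local for odd $p$ implies split'' spelled out via \Cref{rem:eta_1_truncated}. One small slip: you write $\fiblog R[0,\infty)\simeq \fiblog R$, but $\fiblog R$ has nontrivial $\pi_{-1}$ (the cokernel of $\log_0$), so it is not connective; this does not affect the argument, which only concerns the connective cover.
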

\begin{proof}
The homotopy groups of $\fiblog R[0,\infty)$ are computed in \Cref{logfibhomoKFl}. Since this spectrum is $1$-truncated and $p$-local for an odd prime $p$, it splits to a direct sum according to its homotopy groups.  
\end{proof}
We also immediately get a splitting of the integral units, and this time we can include the case $\ell=p$.

\begin{lem}\label{glintsplitKFlpodd}
    Let $R=K(\F_\ell)_p$ with $ p>2$. Then
    $$\glint R\simeq \Z/2\oplus \Sigma(\F_\ell^\times)_p\oplus \glint R[2,\infty).$$
\end{lem}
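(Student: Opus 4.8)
The plan is to peel $\pi_0\glint R\cong\Z/2$ off $\glint R$, using that the positive-degree part has already been split in \Cref{splitting_slone}. Write $X:=\glint R$ and consider the bottom Postnikov fiber sequence, extended by its connecting map:
\[
\slone R \too X \too H\Z/2 \oto{\delta} \Sigma\slone R.
\]
If $\delta$ is null then $X\simeq H\Z/2\oplus\slone R$. Since $\F_\ell$ is a field, $\pi_1 K(\F_\ell)\simeq\F_\ell^\times$, so \Cref{splitting_slone} applied to $\F_\ell$ gives (after $p$-completion) $\slone R\simeq\Sigma(\F_\ell^\times)_p\oplus\slone R[2,\infty)$, and $\slone R[2,\infty)=\glint R[2,\infty)$; this yields the asserted decomposition. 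So the whole proof reduces to showing $\delta\simeq 0$.

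I would check this by splitting $\delta$ into its two components along $\slone R\simeq\Sigma(\F_\ell^\times)_p\oplus\glint R[2,\infty)$. The first component, $H\Z/2\to\Sigma^2(\F_\ell^\times)_p$, is by construction the $k$-invariant of $\glint R[0,1]$; since $p$ is odd and $R=K(\F_\ell)_p$ is $p$-local, \Cref{gltrunc_odd_p} tells us $\glint R[0,1]$ already splits as $\Z/2\oplus\Sigma(\F_\ell^\times)_p$, so this $k$-invariant vanishes (concretely it is $\eta$-multiplication $\pi_0 X/2\to\pi_1 X$, and $\pi_1 X\cong(\F_\ell^\times)_p$ has odd order). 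The second component, $H\Z/2\to\Sigma\glint R[2,\infty)$, vanishes because $2\cdot\mathrm{id}_{H\Z/2}\simeq 0$ — the group $[H\Z/2,H\Z/2]$ is $\Z/2$ — while $2$ acts invertibly on $\glint R[2,\infty)$, whose homotopy groups are finite $p$-groups with $p$ odd (they are the $p$-complete $K$-groups $(\pi_i K(\F_\ell))_p$ for $i\ge 2$, which are finite by Quillen's computation); hence $[H\Z/2,\Sigma\glint R[2,\infty)]=0$. With both components trivial, $\delta\simeq 0$, and the case $\ell=p$ is covered degenerately: $K(\F_p)_p\simeq H\Z_p$, so $\slone R=0$, $(\F_p^\times)_p=0$, and the formula reads $\glint R\simeq\Z/2$.

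There is essentially no hard step; the only point worth flagging is the choice to split $\pi_0$ off the already-split $\slone R$ rather than to split $\glint R[0,1]$ off $\glint R$ directly. The latter route would require the vanishing of $[\Sigma(\F_\ell^\times)_p,\Sigma\glint R[2,\infty)]=[(\F_\ell^\times)_p,\glint R[2,\infty)]$, a group assembled from the mod-$p$ cohomology of the Eilenberg--MacLane spectrum on the $p$-group $(\F_\ell^\times)_p$, which need not vanish; routing through $\slone R$ sidesteps this and makes the argument purely formal once \Cref{splitting_slone} and \Cref{gltrunc_odd_p} are in hand.
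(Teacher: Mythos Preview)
Your proof is correct and uses the same ingredients as the paper's (\Cref{gltrunc_odd_p}, \Cref{splitting_slone}, and $p$-locality for odd $p$), just organized in the mirror-image order: you split off $\pi_0$ from an already-split $\slone R$, while the paper splits off $\glint R[0,1]$ directly and shows its $k$-invariant into $\Sigma\glint R[2,\infty)$ vanishes componentwise. Your closing worry about the latter route is misplaced, though: one does not need $[\Sigma(\F_\ell^\times)_p,\Sigma\glint R[2,\infty)]=0$, only that the \emph{specific} $k$-invariant class vanishes, and that follows immediately because the section $\Sigma(\F_\ell^\times)_p\to\slone R\hookrightarrow\glint R$ from \Cref{splitting_slone} lifts the inclusion $\Sigma(\F_\ell^\times)_p\hookrightarrow\glint R[0,1]$.
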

\begin{proof}
    By \Cref{gltrunc_odd_p} we have $\glint R[0,1]\simeq \Z/2 \oplus \Sigma (\F_\ell^\times)_p$. Hence, it remains to show that the $k$-invariant 
    \[
    \Z/2 \oplus \Sigma (\F_\ell^\times)_p \to \Sigma \glint R [2,\infty)
    \]
    is trivial. The $\Sigma (\F_\ell^\times)_p$-component of this map vanishes by the splitting of $\slone$ (\Cref{sl1split}), and the $\Z/2$-component vanishes since $\Sigma \glint R[2,\infty)$ is $p$-local and hence receives no maps from $\Z/2$.

\end{proof}

When $p=2$ we had an unresolved extension problem in $\pi_0\fiblog R=(\F_\ell^\times)_2\rtimes\Z/2$ (where here again the symbol $\rtimes$ stands for an unspecified extension of the two groups). Note that because $\pi_0\G_mR=(\F_\ell^\times)_2$ is half the size of $\pi_0\fiblog R$ (cf. \Cref{pistarGmKFlK1loc}), there must be a $k$-invariant connecting $\pi_0$ and $\pi_1$ of $\fiblog R$. In other words, multiplication by $\eta$ must be non-trivial on the homotopy of this spectrum. But we can say more.

\begin{lem}\label{splitiffFl}
    Let $R=K(\F_\ell)_2$ with $\ell> 2$. Then $\glint R$ splits at $1$ if and only if the surjection of groups $\pi_0\fiblog R=(\F_\ell^\times)_2\rtimes\Z/2\rightarrow \Z/2$ is split. 
\end{lem}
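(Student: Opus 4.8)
The statement compares a topological splitting of $\glint R$ with the splitting of a short exact sequence of abelian groups; the object bridging the two is the logarithmic fiber $\fiblog R$, where $R = \LKone K(\F_\ell)$ (so $\glint R$ is the spectrum in the statement, by \Cref{LQprop}). Recall from \Cref{logfibhomoKFl} that $\fiblog R[0,\infty)$ is $1$-truncated with $\pi_0 \cong (\F_\ell^\times)_2 \rtimes \Z/2$ and $\pi_1 \cong (\F_\ell^\times)_2$, and that $\log_1 = 0$. The plan begins by constructing a comparison map $j \colon \fiblog R[0,\infty) \to \glint R$: since $\pi_0\glone R = \Z_2^\times$ and $\{\pm1\}$ is a direct summand of it, the fiber of $\glint R \to \glone R$ is $\Sigma^{-1}\Z_2$; and $[\fiblog R, \Z_2] = 0$, because the homotopy of $\fiblog R$ in nonnegative degrees is finite and its negative part does not contribute to $H^0(\fiblog R;\Z_2)$. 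So the canonical map $\fiblog R \to \glone R$ lifts along $\glint R \to \glone R$, and as $\glint R$ is connective the lift factors through $\fiblog R[0,\infty)$. From the long exact sequence of $\log$ and $\log_1 = 0$ one reads that $j$ is an isomorphism on $\pi_1$ and induces the quotient map $\pi_0\fiblog R \twoheadrightarrow \Z/2$ on $\pi_0$; hence the $1$-truncation $\bar j \colon \fiblog R[0,\infty) \to \glint R[0,1]$ of $j$ has fiber $(\F_\ell^\times)_2$, concentrated in degree $0$.

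I would then reduce both sides of the equivalence to one statement: $\glint R$ is split at $1$ if and only if $\fiblog R[0,\infty) \simeq (\F_\ell^\times)_2 \oplus \glint R[0,1]$. That this wedge decomposition is equivalent to the splitting of $0 \to (\F_\ell^\times)_2 \to \pi_0\fiblog R \to \Z/2 \to 0$ follows from \Cref{rem:eta_1_truncated}: compatibility of the spectrum map $\bar j$ (an isomorphism on $\pi_1$) with multiplication by $\eta$ forces the $\eta$-action on $\pi_0\fiblog R$ to be the composite $\pi_0\fiblog R / 2 \twoheadrightarrow \Z/2 \xrightarrow{1 \mapsto -1} (\F_\ell^\times)_2$, and comparing with the description of $\glint R[0,1]$ in \Cref{lemmasq2glint} shows that a splitting of the extension is interchangeable with the wedge decomposition. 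One direction of the desired equivalence is now formal: given $\fiblog R[0,\infty) \simeq (\F_\ell^\times)_2 \oplus \glint R[0,1]$, restrict $j$ to the summand $\glint R[0,1]$ and compose with the truncation $\glint R \to \glint R[0,1]$; this self-map of $\glint R[0,1]$ is an isomorphism on $\pi_0$ and $\pi_1$, hence an equivalence, and correcting by its inverse produces a section of the truncation, so $\glint R$ is split at $1$.

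For the converse, suppose $\glint R \simeq \glint R[0,1] \oplus \glint R[2,\infty)$, so the $k$-invariant $e \colon \glint R[0,1] \to \Sigma\glint R[2,\infty)$ vanishes. The octahedral axiom for $\fiblog R[0,\infty) \xrightarrow{j} \glint R \xrightarrow{t} \glint R[0,1]$ factors $e$ as $\Sigma\mu \circ \delta$, where $\delta \colon \glint R[0,1] \to \Sigma(\F_\ell^\times)_2$ is the $k$-invariant of the fiber sequence $(\F_\ell^\times)_2 \to \fiblog R[0,\infty) \xrightarrow{\bar j} \glint R[0,1]$ and $\mu \colon (\F_\ell^\times)_2 \to \glint R[2,\infty)$ is the attaching map provided by the octahedron. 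Since $\bar j$ is an isomorphism on $\pi_1$, $\delta$ vanishes on $\pi_1$ and hence factors through the truncation $\rho \colon \glint R[0,1] \to \Z/2$ as $\delta = \delta' \circ \rho$ with $\delta' \in [\Z/2, \Sigma (\F_\ell^\times)_2] \cong \mathrm{Ext}^1_{\Z}(\Z/2, (\F_\ell^\times)_2)$, and chasing the octahedron identifies $\delta'$ with the class of the extension $\pi_0\fiblog R$. Moreover $\delta = 0$ if and only if $\delta' = 0$ (there are no maps $\Sigma^2(\F_\ell^\times)_2 \to \Sigma(\F_\ell^\times)_2$ of spectra), so $\fiblog R[0,\infty)$ splits off $(\F_\ell^\times)_2$ — equivalently the extension splits — exactly when $\delta' = 0$; the remaining task is to deduce $\delta' = 0$ from $\Sigma\mu \circ \delta' \circ \rho = 0$.

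This last step I expect to be the main obstacle. Since $\mathrm{cofib}(\rho) \simeq \Sigma^2(\F_\ell^\times)_2$ with connecting map the nonzero $k$-invariant $\kappa_1$ of $\glint R[0,1]$ (\Cref{lemmasq2glint}), $\Sigma\mu \circ \delta' \circ \rho$ vanishes if and only if $\Sigma\mu \circ \delta'$ factors through $\kappa_1$, and one has to rule this out when $\delta' \ne 0$. I would do so by making the map $\mu$ (and $\kappa_1$) explicit: identify $\glint R[2,\infty) \simeq R[2,\infty)$ via the Rezk logarithm, use the splitting of $\slone R$ (\Cref{splitting_slone}) to isolate the $\Sigma(\F_\ell^\times)_2$-summand through which the relevant part of $\mu$ passes, and then verify, in the low-dimensional homotopy of $K(\F_\ell)$ computed by Quillen, that the low-degree $2$-primary cohomology operation detecting the nontrivial extension class is not absorbed after post-composition with $\mu$. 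Granting this, $e = 0$ forces $\delta' = 0$, hence $\fiblog R[0,\infty) \simeq (\F_\ell^\times)_2 \oplus \glint R[0,1]$, which by the reduction above is equivalent to the splitting of the extension, completing the proof.
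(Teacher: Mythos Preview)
Your construction of $j\colon \fiblog R[0,\infty)\to \glint R$ and your argument for the ``if'' direction are essentially the paper's, though the paper packages the same content more compactly: the diagram
\[
\begin{array}{ccc}
\fiblog R[0,\infty) & \xrightarrow{\ \bar j\ } & \glint R[0,1]\\
\downarrow & & \downarrow\\
(\F_\ell^\times)_2\rtimes\Z/2 & \longrightarrow & \Z/2
\end{array}
\]
is a pullback (because $\bar j$ is an isomorphism on connected covers), so a group-theoretic section of the bottom map lifts to a section of $\bar j$, and composing with $j$ gives the section of the truncation. Your $\eta$-action analysis recovers the same pullback property from \Cref{rem:eta_1_truncated}.

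The genuine gap is in your converse. You correctly obtain the octahedral factorization $e=\Sigma\mu\circ\delta$ and correctly identify the residual problem: from $e=0$ deduce $\delta'=0$. But your proposed resolution---analyze $\mu$ explicitly via $\log$, the $\slone$-splitting, and Quillen's computation---is neither carried out nor clearly feasible. (Note, incidentally, that $\pi_2 K(\F_\ell)=0$, so $\glint R[2,\infty)=\glint R[3,\infty)$ and there is no $\Sigma(\F_\ell^\times)_2$-summand in the target of $\mu$ for the ``relevant part of $\mu$'' to pass through.) The paper avoids this entire detour with a one-line observation you are missing: the spectrum $\glint R[0,1]$ is \emph{bounded}, hence $T(1)$-acyclic, while $R$ is $T(1)$-local. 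Therefore any section $s\colon \glint R[0,1]\to \glint R\subset \glone R$ becomes null after composing with $\log\colon \glone R\to R$, and so lifts through the fiber to a map $\glint R[0,1]\to \fiblog R[0,\infty)$. Evaluating on $\pi_0$ gives the splitting of the extension directly, with no need to control $\mu$ or invoke the octahedron.
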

\begin{proof}
First, the map $\fiblog R [0,\infty)\rightarrow \glone R$ factors through $\glint R$ as is evident from the log LES. 
Then, consider the following diagram, in which the upper vertical maps are the inclusions of the connected covers:
$$\begin{tikzcd}
     \Sigma(\F_\ell^\times)_2\arrow[r]\arrow[d]&\glint[1,\infty)\arrow[r]\arrow[d] &\Sigma(\F_\ell^\times)_2\arrow[d] \\
     \fiblog R[0,\infty)\arrow[r]\arrow[d]&\glint R \arrow[r]\arrow[d]& \glint R[0,1]\arrow[d] \\
     (\F_\ell^\times)_2\rtimes\Z/2\arrow[r]&\Z/2 \arrow[r]&\Z/2 \\
\end{tikzcd}.$$

Since the composition of the top row in an equivalence and the columns are fiber sequences, the outer square of the bottom rectangle is a pullback square. Hence, a section of the bottom horizontal composition provides a section of the map 
$\fiblog R \to \glint R[0,1]$, which then can be composed with the map $\fiblog R \to \glint R$ to provide the desired section of the $1$-truncation map $\glint R \to \glint R[0,1]$. 

Conversely, suppose the truncation map is split. Then since the induces inclusion of $\glint R[0,1]$ into $\glone R$ is null after $T(1)$-localization, it must factors through $\fiblog R$ by its universal property as a fiber. This gives a section $\glint R[0,1] \to \fiblog R[0,\infty)$  which gives on $\pi_0$ the desired splitting of the bottom row.

\end{proof}

By functoriality of the log LES for ring maps, the $\pi_0$-splitting hypothesis in \Cref{splitiffFl} is implied by the $\pi_0$-splitting hypothesis of \Cref{splitiffZ}, which is proved in \Cref{resolveAext}. So assuming these later results for the moment we get the following lemma.
\begin{lem}\label{logfiberKFlp2}
    Let $R= K(\F_\ell)_2$ with $\ell\neq p=2$. Then connective cover of the logarithmic fiber is
    $$\fiblog R[0,\infty)\simeq (\F_\ell^\times)_2\oplus \glint R[0,1].$$
\end{lem}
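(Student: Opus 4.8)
The idea is to leverage \Cref{splitiffFl} together with the promised resolution of the integral extension problem. First I would invoke \Cref{splitiffZ} (proved in \Cref{resolveAext}): it asserts that the surjection $\pi_0\fiblog(K(\Z)_2) = \Z_2 \times A \to \Z/2$ is split, where $A \simeq \Z/4$ by \Cref{remA}. By functoriality of the logarithmic fiber sequence along the ring map $\Z \to \F_\ell$ (which induces a compatible map of log long exact sequences, hence of the corresponding $2$-truncated connective spectra), the splitting of the $\Z$-level surjection onto $\Z/2$ pushes forward to a splitting of the surjection $\pi_0\fiblog(K(\F_\ell)_2) = (\F_\ell^\times)_2 \rtimes \Z/2 \to \Z/2$. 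Concretely, the section $\Z/2 \to \pi_0\fiblog(K(\Z)_2)$ composes with $\pi_0\fiblog(K(\Z)_2) \to \pi_0\fiblog(K(\F_\ell)_2)$ to give the desired section, since both are compatible with the projection to $\Z/2$ coming from the common $\pi_0$ of the strict-units-versus-units comparison.

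With that $\pi_0$-splitting in hand, \Cref{splitiffFl} tells us directly that $\glint R$ splits at $1$ for $R = K(\F_\ell)_2$. It remains to upgrade this to the stated identification of $\fiblog R[0,\infty)$ itself. Here I would run the argument of \Cref{splitiffFl} in the reverse direction: the splitting of the $1$-truncation map $\glint R \to \glint R[0,1]$ produces, via the universal property of $\fiblog R$ as a fiber (the inclusion of $\glint R[0,1]$ becomes null after $T(1)$-localization), a section $\glint R[0,1] \to \fiblog R[0,\infty)$ of the natural map $\fiblog R[0,\infty) \to \glint R[0,1]$. The fiber of this latter map is $\Sigma \pi_1$ of the kernel, which by \Cref{logfibhomoKFl} (and the fact that $\fiblog R$ is $2$-truncated with $\pi_2 = 0$) is exactly $\Sigma (\F_\ell^\times)_2$; moreover this $\Sigma(\F_\ell^\times)_2$ splits off by the splitting of $\slone$ (\Cref{sl1split}) as in the top row of the diagram in \Cref{splitiffFl}. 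Combining the two splittings gives $\fiblog R[0,\infty) \simeq (\F_\ell^\times)_2 \oplus \glint R[0,1]$.

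The main obstacle is purely bookkeeping: making precise that the functoriality along $\Z \to \F_\ell$ really does transport the $\pi_0$-splitting, i.e. that the projections $\pi_0\fiblog(K(\Z)_2) \to \Z/2$ and $\pi_0\fiblog(K(\F_\ell)_2) \to \Z/2$ are identified under the induced map (both are the connecting-type map measuring the failure of $\pi_0\G_m$ to be all of $\pi_0\glone$, and this is natural). Once that compatibility is recorded, everything else is a formal consequence of the two halves of \Cref{splitiffFl} and the already-computed homotopy groups. Note this lemma is logically downstream of \Cref{splitiffZ}/\Cref{resolveAext}, so the ``assuming these later results'' caveat in the text before \Cref{logfiberKFlp2} must be retained.
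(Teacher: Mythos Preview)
Your approach is essentially the paper's: transport the $\pi_0$-splitting from $\Z$ to $\F_\ell$ by functoriality of the log fiber sequence, then invoke \Cref{splitiffFl}. Two corrections, however. First, $A\simeq (\Z/2)^2$, not $\Z/4$ (cf.\ \Cref{resolveAext}); this does not affect your argument since you only use that $A\to\Z/2$ splits, but it should be stated correctly. More substantively, once you have the section $\glint R[0,1]\to\fiblog R[0,\infty)$, the complementary summand is the \emph{fiber} of $\fiblog R[0,\infty)\to\glint R[0,1]$, and this map is an isomorphism on $\pi_1$ (both sides are $(\F_\ell^\times)_2$, as in the top row of the diagram in \Cref{splitiffFl}) and has kernel $(\F_\ell^\times)_2$ on $\pi_0$. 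Hence the fiber is $(\F_\ell^\times)_2$ concentrated in degree~$0$, not $\Sigma(\F_\ell^\times)_2$; the ``combining two splittings'' step is unnecessary and the sentence identifying the fiber as $\Sigma(\F_\ell^\times)_2$ is wrong. The splitting $\fiblog R[0,\infty)\simeq \glint R[0,1]\oplus(\F_\ell^\times)_2$ follows immediately from the section alone.

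A minor streamlining: the paper's route is slightly more direct than yours. Rather than first deducing that $\glint R$ splits at $1$ and then running the converse of \Cref{splitiffFl} to get back a section into $\fiblog R[0,\infty)$, observe that the forward direction of \Cref{splitiffFl} already produces the section $\glint R[0,1]\to\fiblog R[0,\infty)$ directly from the $\pi_0$-splitting (via the pullback square in its proof). So you can skip the detour through $\glint R$.
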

Note that the homotopy type of the second summand is determined in Section \Cref{glintsection}.We a also get a statement about the splitting of $\glint R$, this time with $p$ and $\ell$ unrestricted

\begin{lem}\label{glintsplitKFlcomplete}
    Let $R=K(\F_\ell)_p$. Then
    $$\glint R\simeq \glint R[0,1]\oplus \glint R[2,\infty).$$
\end{lem}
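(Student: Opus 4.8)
The plan is to split into cases according to the prime $p$ and assemble the already-proved partial results. For $p > 2$ the statement is exactly \Cref{glintsplitKFlpodd}, so nothing further is needed there. For $p = 2$ with $\ell = p$, i.e. $R = K(\F_2)_2$, note that $K(\F_2)_2 \simeq \Z_2$ has trivial positive homotopy, so $\glint R$ is already $0$-truncated and the splitting is trivial (alternatively this falls under the case $\ell = p$ where there is nothing to do). The only substantive case is $p = 2$, $\ell \ne 2$, i.e. $R = K(\F_\ell)_2$ with $\ell$ an odd prime.

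For that case, I would invoke \Cref{splitiffFl}: $\glint R$ splits at $1$ if and only if the surjection $\pi_0 \fiblog R = (\F_\ell^\times)_2 \rtimes \Z/2 \onto \Z/2$ is split. So the whole problem reduces to that $\pi_0$-splitting. As the paragraph preceding \Cref{logfiberKFlp2} indicates, this $\pi_0$-splitting for $K(\F_\ell)$ is deduced by functoriality from the corresponding splitting for $K(\Z)$ — namely from \Cref{splitiffZ}/\Cref{resolveAext}, which resolve the analogous extension problem $\pi_0 \fiblog \LKone K(\Z) = \Z_2 \times A$ and show $A \simeq \Z/2^2$, hence that the analogous surjection to $\Z/2$ is split. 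The ring map $\Z \to \F_\ell$ induces a map of logarithmic fiber sequences, hence a map on $\pi_0 \fiblog$ compatible with the surjections onto $\Z/2$ coming from the degree-zero $p$-adic logarithm's kernel $\mathrm{tor}(\Z_2^\times) = \Z/2$; a section over $\Z$ pushes forward to a section over $\F_\ell$. So once \Cref{resolveAext} is granted (as the excerpt explicitly allows), the $\pi_0$-splitting holds for $R = K(\F_\ell)_2$, and \Cref{splitiffFl} gives the splitting of $\glint R$ at $1$, i.e. $\glint R \simeq \glint R[0,1] \oplus \glint R[2,\infty)$.

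Finally, I would assemble the three cases into a single statement. Concretely: if $\ell = p$ then $\glint R$ is (at most) $0$-truncated and the claim is vacuous; if $p > 2$ apply \Cref{glintsplitKFlpodd}; if $p = 2$ and $\ell \ne 2$ apply the previous paragraph via \Cref{splitiffFl} and \Cref{resolveAext}. In all cases we obtain $\glint R \simeq \glint R[0,1]\oplus \glint R[2,\infty)$, with the homotopy type of the low truncation $\glint R[0,1]$ read off from \Cref{gltrunc_odd_p} (for odd $p$) or \Cref{lemmasq2glint} (for $p = 2$), as recorded in the table.

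The main obstacle is entirely localized in the $p = 2$, $\ell \ne 2$ case and is precisely the extension problem resolved in \Cref{resolveAext}: showing the relevant $\pi_0$-surjection is split. Everything else is bookkeeping — matching up the long exact sequences functorially along $\Z \to \F_\ell$ and checking that the section transports. Since the excerpt permits us to assume \Cref{splitiffZ} and \Cref{resolveAext}, the proof here is short; the real work has been front-loaded into those later results.
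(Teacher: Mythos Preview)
Your proof is correct and follows essentially the same approach as the paper: split into the cases $p>2$ (handled by \Cref{glintsplitKFlpodd}), $p=2$ with $\ell\ne 2$ (handled via \Cref{splitiffFl} together with the functoriality reduction to \Cref{splitiffZ}/\Cref{resolveAext}), and $\ell=p=2$ (trivial since $K(\F_2)_2\simeq\Z_2$). Your explicit spelling-out of the functoriality step along $\Z\to\F_\ell$ is a bit more detailed than the paper's one-line remark, but the argument is the same.
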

\begin{proof}
    We have just proved it for $\ell\neq 2=p$. We have proved it for $p>2$ in \Cref{glintsplitKFlpodd}. When $\ell=2=p$, $R=\Z_2$ and the result is trivially true.
\end{proof}

\subsection{For $R=\LKone K(\Z)$}
We have already calculated the connective homotopy groups of $\fiblog R$. As in the case of $K(\F_\ell)$, when $p>2$ there is no possibility of a $k$-invariant.

\begin{lem}\label{logfiberKZoddp}
    Let $R=\LKone K(\Z)$ with $p>2$. Then the connective cover of the logarithmic fiber $\fiblog R$ is
    $$\fiblog R[0,\infty)\simeq \F_p^\times\oplus\Z_p\oplus\Sigma\Z_p.$$
    
\end{lem}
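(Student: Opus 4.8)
The plan is to observe that $\fiblog R[0,\infty)$ is a connective, $1$-truncated spectrum whose homotopy groups have already been determined, and then to invoke the structure theory of $1$-truncated spectra from \Cref{rem:eta_1_truncated}. First I would combine \Cref{logfibhomoKZoddp}, which gives $\pi_0\fiblog R\simeq \Z_p\times\F_p^\times$, $\pi_1\fiblog R\simeq \Z_p$ and $\pi_2\fiblog R=0$, with \Cref{logLES}, which says $\fiblog R$ is $2$-truncated. Together these show that the connective cover $\fiblog R[0,\infty)$ is concentrated in degrees $0$ and $1$, hence is $1$-truncated.

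Next I would apply \Cref{rem:eta_1_truncated}: a connective $1$-truncated spectrum $X$ splits as $\pi_0 X\oplus\Sigma\pi_1 X$ exactly when the $\eta$-multiplication map $\pi_0 X/2\to \pi_1 X$ vanishes. In our situation the target $\pi_1\fiblog R\simeq \Z_p$ is torsion-free, and this is precisely where the hypothesis $p>2$ is used, since $\Z_p$ then has no $2$-torsion; as the source $\pi_0\fiblog R/2$ is killed by $2$, the map is forced to be zero. Therefore
\[
\fiblog R[0,\infty)\simeq \pi_0\fiblog R\oplus\Sigma\pi_1\fiblog R\simeq (\Z_p\times\F_p^\times)\oplus\Sigma\Z_p\simeq \F_p^\times\oplus\Z_p\oplus\Sigma\Z_p,
\]
which is the desired identification. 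This mirrors exactly the argument already used for \Cref{logfiberKFloddp}.

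I do not expect a genuine obstacle here: the statement is a formal consequence of the homotopy-group computation of \Cref{logfibhomoKZoddp} and the truncatedness input of \Cref{logLES}, together with the fact that the only potential $k$-invariant is governed by $\eta$ and is killed by the absence of $2$-torsion in $\pi_1$. The one point to be careful about is to confirm that $\pi_2\fiblog R=0$, so that the spectrum is genuinely $1$-truncated rather than merely $2$-truncated — but this is precisely what \Cref{logfibhomoKZoddp} records.
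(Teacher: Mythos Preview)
Your proposal is correct and matches the paper's approach exactly: the paper simply remarks that the homotopy groups have already been computed (in \Cref{logfibhomoKZoddp}) and that for odd $p$ there is no possible $k$-invariant, which is precisely the $\eta$-argument via \Cref{rem:eta_1_truncated} that you spell out.
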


Similarly $\glint R$ is automatically split.
\begin{lem}\label{glintsplitKZpodd}
    Let $R=\LKone K(\Z)$ with $p>2$. Then
    $$\glint R\simeq \Z/2\oplus \Sigma\Z_p\oplus \glint R[2,\infty).$$
\end{lem}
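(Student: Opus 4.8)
The plan is to mirror the argument of \Cref{glintsplitKFlpodd} (the case $R = K(\F_\ell)_p$ for odd $p$), since the relevant structural features are identical. First I would invoke \Cref{gltrunc_odd_p} applied to $R = \LKone K(\Z)$, which is a $p$-local commutative ring spectrum for $p > 2$: this gives $\glint R[0,1] \simeq \Z/2 \oplus \Sigma \pi_1 R$, and since $\LKone K(\Z) \simeq K(\Z[1/p])_p$ has $\pi_1 = \Z_p$ (by the key facts recorded before \Cref{pistarGmKZK1locoddp}), we get $\glint R[0,1] \simeq \Z/2 \oplus \Sigma \Z_p$. Thus the content of the lemma is precisely that the $k$-invariant, i.e. the edge map
\[
\Z/2 \oplus \Sigma \Z_p \;\longrightarrow\; \Sigma \glint R[2,\infty)
\]
of the cofiber sequence $\glint R[2,\infty) \to \glint R \to \glint R[0,1]$, is null.

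I would split this into its two components. The $\Sigma\Z_p$-component vanishes because of the splitting of $\slone R$: by \Cref{splitting_slone} (using that $\pi_1 K(\Z[1/p]) \simeq \Z[1/p]^\times$, or more directly the splitting recorded in item (2) of the key facts via the map $\pic \to \glone$), the connected cover $\slone R = \glint R[1,\infty)$ splits as $\Sigma \Z_p \oplus \glint R[2,\infty)$, which forces the restriction of the $k$-invariant to the $\Sigma\Z_p$ summand to be zero. The $\Z/2$-component vanishes for a formal reason: $\Sigma \glint R[2,\infty)$ is a $2$-connected $p$-local spectrum for the odd prime $p$, hence $[\Z/2, \Sigma \glint R[2,\infty)] = 0$ as the target has no $2$-torsion in any homotopy group. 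Since both components vanish, the $k$-invariant is null and the splitting follows.

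The main (and essentially only) obstacle is bookkeeping: one must be careful that $\glint R \simeq \glint \LKone K(\Z)$ genuinely has $\pi_0 = \Z/2$ — this is true by \Cref{glintdef} together with the fact that the $\{\pm 1\}$-pullback is taken after localization — and that the map $\pic(\Z[1/p]) \to \glone R$ underlying the splitting of $\slone R$ is available in the $T(1)$-local setting; both are handled by the general machinery of \Cref{sl1split} and the identification $\LKone K(\Z) \simeq K(\Z[1/p])_p$. No new computation of homotopy groups or $k$-invariants is needed beyond what \Cref{logfibhomoKZoddp} and \Cref{pistarGmKZK1locoddp} already provide; the argument is purely a splitting/obstruction argument identical in shape to the $\F_\ell$ case.
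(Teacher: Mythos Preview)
Your proposal is correct and follows exactly the approach the paper intends: the paper simply writes ``Similarly $\glint R$ is automatically split'' after \Cref{logfiberKZoddp}, referring back to the proof of \Cref{glintsplitKFlpodd}, which is precisely the argument you have spelled out (identify $\glint R[0,1]$ via \Cref{gltrunc_odd_p}, kill the $\Sigma\Z_p$-component of the $k$-invariant using the $\slone$ splitting, and kill the $\Z/2$-component by $p$-locality of the target).
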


At $p=2$ we have an extension problem to solve (cf. \Cref{remA}).

\begin{lem}\label{splitiffZ}
    Let $R=\LKone K(\Z)$ with $p=2$. The truncation map $\glint R\rightarrow \glint R[0,1]$ is split if and only if the group map $A\rightarrow\Z/2$ is split. 
\end{lem}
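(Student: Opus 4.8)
The plan is to adapt the proof of \Cref{splitiffFl} essentially verbatim; the only genuinely new feature at $p=2$ is that the homotopy of $\fiblog R$ and of $\glint R$ carries free $\Z_2$-summands (the ``valuation'' directions coming from $\pi_1 R$), and accounting for these is exactly why the $\pi_0$-condition is phrased through the finite group $A$ rather than through all of $\pi_0\fiblog R$. First I would record the two facts that make the comparison between $\fiblog R[0,\infty)$ and $\glint R$ work. By \Cref{logfiphomoKZp2} we have $\pi_2\fiblog R=0$, so $\fiblog R[0,\infty)$ is $1$-truncated with $\pi_0=\Z_2\times A$ and $\pi_1=\Z_2\times\Z/2$; moreover the image of $\pi_0\fiblog R\to\pi_0\glone R=\Z_2^\times$ is the torsion subgroup $\{\pm1\}=\ker(\log_0)$, so the map $\fiblog R[0,\infty)\to\glone R$ factors (uniquely, as $\{\pm1\}\hookrightarrow\Z_2^\times$ is a monomorphism) through $\glint R$. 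Second, the map $\pi_1\fiblog R\to\pi_1\glint R$ induced by $\fiblog R\to\glint R$ is an isomorphism: in the long exact sequence of the logarithmic fiber sequence it is the map $\pi_1\fiblog R\to\pi_1\glone R$, whose image is $\ker(\log_1)=\pi_1\glone R$ since $\log_1=0$ (by $\theta_1=0$, shown in Section~\ref{calcp2}, together with \Cref{keylogthetaformula}); being a surjection between finitely generated $\Z_2$-modules it is an isomorphism.

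Next I would assemble the diagram
$$\begin{tikzcd}
\fiblog R[1,\infty)\arrow[r]\arrow[d] & \slone R\arrow[r]\arrow[d] & \Sigma\pi_1\glint R\arrow[d]\\
\fiblog R[0,\infty)\arrow[r]\arrow[d] & \glint R\arrow[r]\arrow[d] & \glint R[0,1]\arrow[d]\\
\pi_0\fiblog R\arrow[r] & \pi_0\glint R\arrow[r] & \pi_0\glint R[0,1]
\end{tikzcd}$$
whose columns are the $\pi_0$-truncation fiber sequences and whose bottom-right map is an isomorphism of copies of $\Z/2$. By the previous paragraph the composite along the top row is $\Sigma$ of an isomorphism, hence an equivalence; since a map of fiber sequences that is an equivalence on fibers induces a pullback on the ``total space over base'' corners, the outer square of the bottom rectangle is cartesian, i.e.
$$\fiblog R[0,\infty)\;\simeq\;\glint R[0,1]\times_{\pi_0\glint R[0,1]}\pi_0\fiblog R$$
over $\glint R[0,1]$.

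The two implications are then formal, exactly as in \Cref{splitiffFl}. If $\glint R\to\glint R[0,1]$ has a section $\sigma$, then $\glint R[0,1]\xrightarrow{\sigma}\glint R\to\glone R$ factors through $\fiblog R$ (the composite $\glint R[0,1]\to\glone R\xrightarrow{\log}R$ is null, as $\log$ is the $T(1)$-localization map of $\glone R$ and $\glint R[0,1]$ is truncated, hence $T(1)$-acyclic) and, being defined on a connective spectrum, through $\fiblog R[0,\infty)$; the resulting map $\glint R[0,1]\to\fiblog R[0,\infty)$ is a section of the structure map $\fiblog R[0,\infty)\to\glint R[0,1]$ (it is one after the map to $\glone R$, which is injective on $\pi_0$ and an isomorphism on $\pi_1$), and on $\pi_0$ it splits the surjection $\pi_0\fiblog R\to\pi_0\glint R[0,1]=\Z/2$. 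Conversely, a group section $s$ of that surjection gives, via the cartesian square, a map $\glint R[0,1]\to\pi_0\glint R[0,1]\xrightarrow{s}\pi_0\fiblog R$ lying over $\pi_0\glint R[0,1]$, hence a section of $\fiblog R[0,\infty)\to\glint R[0,1]$, which composed with $\fiblog R[0,\infty)\to\glint R$ is a section of $\glint R\to\glint R[0,1]$. Finally one identifies ``$\pi_0\fiblog R\to\Z/2$ is split'' with ``$A\to\Z/2$ is split'': the free $\Z_2$-summand of $\pi_0\fiblog R=\Z_2\times A$ is, by its construction in \Cref{logfiphomoKZp2}, the boundary image of the valuation factor of $\pi_1 R$, hence lies in $\ker(\pi_0\fiblog R\to\pi_0\glone R)\subseteq\ker(\pi_0\fiblog R\to\Z/2)$, so the surjection factors as $\pi_0\fiblog R\to A\to\Z/2$.

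I do not expect any serious obstacle: the skeleton is dictated by \Cref{splitiffFl}. The only point needing real care --- and the sole place where the argument departs from the $K(\F_\ell)$ case --- is this last identification, together with the attendant bookkeeping around the free $\Z_2$-summands in the homotopy of $\fiblog R$ and $\glint R$; everything else goes through mechanically.
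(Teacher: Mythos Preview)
Your proof is correct and follows essentially the same approach as the paper's: the same three-row diagram, the same pullback argument from the top row being an equivalence, and the same two implications via factoring through $\fiblog R[0,\infty)$. Your treatment is slightly more explicit in justifying why $\pi_1\fiblog R\to\pi_1\glint R$ is an isomorphism and why the surjection $\pi_0\fiblog R\to\Z/2$ factors through $A$, whereas the paper dispatches the latter by noting that any homomorphism $\Z/2\to\Z_2\times A$ has trivial $\Z_2$-component; these are equivalent observations.
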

\begin{proof}
The proof is similar to that of \Cref{splitiffFl}.  The map $\fiblog R[0,\infty)\rightarrow \glone R$ factors through $\glint R$ and we arrive at the following diagram
$$\begin{tikzcd}
     \Sigma(\Z_2\times\Z/2)\arrow[r]\arrow[d]&\glint[1,\infty)\arrow[r]\arrow[d] &\Sigma(\Z_2\times\Z/2)\arrow[d] \\
     \fiblog R[0,\infty)\arrow[r]\arrow[d]&\glint R \arrow[r]\arrow[d]& \glint R[0,1]\arrow[d] \\
     \Z_2\times A\arrow[r]&\Z/2 \arrow[r]&\Z/2 \\
\end{tikzcd}.$$
in which the columns are fiber sequences. The upper composition is an equivalence, so the bottom rectangle is a pullback square. Hence, a section of the composition on the bottom row provides a section of $\fiblog R\to \glint R[0,1]$, and hence of the truncation map $\glint R \to \glint R[0,1]$. 

Conversely, since there are no non-zero maps from $\glint R[0,1]$ to $R$, a section of the map $\glint R[0,1] \to \glint R$ gives in particular a map $\glint R[0,1] \to \fiblog R [0,\infty)$  which provides a section of the bottom composition. Such a section $s\colon \Z/2 \to \Z_2 \times A$ must have trivial $\Z_2$-component and hence land in $A$. 



\end{proof}

Recall that the group $A$ is one of $\Z/2,\Z/4$, or $\Z/2 \times \Z/2$. We will show now that the last option is the one that occures, and in particular that the map to $\Z/2$ from \Cref{splitiffZ} indeed splits. 
\begin{lem}\label{resolveAext}
    $A\simeq\Z/2\times \Z/2$.
\end{lem}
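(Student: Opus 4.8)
```latex
The plan is to determine $A = \pi_0 \fiblog R$ as an extension of $\Z/2$ by $\Z_2 \times \Z/2$ by exhibiting enough $2$-torsion in $A$. Recall from \Cref{logfiphomoKZp2} that $A$ fits in a short exact sequence $0 \to \Z_2 \times \Z/2 \to A \to \Z/2 \to 0$, coming from the long exact sequence of the fiber sequence $\fiblog R \to \glone R \oto{\log} R$ together with the identification of $\pi_0 \fiblog R$ as an extension of $\ker(\log_0) = \mathrm{tor}(\Z_2^\times) = \Z/2$ by $\pi_1 R / \mathrm{im}(\log_1) = \pi_1 R = \Z_2 \times \Z/2$ (using $\log_1 = 0$, established just before the lemma). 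Since the sub $\Z_2$-part is a free summand on which the extension is automatically trivial (there are no nontrivial extensions of a finite group by a $2$-adically complete torsion-free group that fail to split off the torsion-free part once we quotient), the real content is to identify the extension $0 \to \Z/2 \to A/\Z_2 \to \Z/2 \to 0$: it is either $\Z/4$ or $\Z/2 \times \Z/2$, and I must rule out $\Z/4$.

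First I would pin down the relevant class using the power operation side rather than the logarithm. The short exact sequence $0 \to \pi_1 R \to \pi_0 \G_m R \to \pi_0 \fiblog R \to \pi_0 R^\times$ is not quite what we want; instead, the cleanest route is to compare with the $\theta$-long exact sequence and use \Cref{pistarGmKZK1locp2}, which gives $\pi_0 \G_m R \simeq \Z_2 \times \Z/2$. The map $\G_m R \simeq \hom(\Z, \fiblog R)$ from \Cref{Zloglemma} realizes $\pi_0 \G_m R$ as $\pi_0$ of the fiber of $\fiblog R \oto{\psi} \fiblog R$ for an appropriate self-map (coming from $x \mapsto x/\psi(x)$, or equivalently from $[\Z,-]$ applied to a generator), so the extension data in $A$ is controlled by the $\eta$-action: the $k$-invariant of $\fiblog R[0,1]$ between $\pi_0 = A$ and $\pi_1 = \Z_2 \times \Z/2$ is exactly the map $A/2 \oto{\Sq^2} \Sigma^2 A/2 \oto{\eta} \Sigma^2 \pi_1$, and its nontriviality (forced because $\pi_0 \G_m R$ is "half" of $A$ in the relevant sense, as in \Cref{splitiffFl}) must be compatible with a group structure on $A$. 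The key computation is that $\eta$ acts nontrivially from a \emph{$\Z/2$-summand} of $A$, which is only possible if $A \simeq \Z/2 \times \Z/2$: if $A$ were $\Z/4$ then $A/2 \simeq \Z/2$ and the $\Sq^2$ composite would have to be detected through the $\Z/4$, but I would show via the Rezk logarithm formula and the explicit structure of $\pi_1 R = \Z_2 \times \Z/2$ (the $\Z/2$ being the image of $\eta \cdot 1$, cf. the $\pic$-argument in \Cref{glintKZchunk}) that $\eta$ kills the image of the $\Z_2$ and the $\Z/4$-class alike, contradicting the established nontriviality.

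Concretely, I would argue as follows. The $\Z/2 \subset \pi_1 R = \pi_1 K(\Z[1/2])_2$ is $\eta$-divisible in the sense that it equals $\eta \cdot (\text{unit class})$; dually, any $\Z/4$ in $\pi_0 \fiblog R$ would have its generator mapping, under the connecting homomorphism of the log LES, to the unit $-1 \in \pi_0 R^\times$, whose preimages in $\pi_0 \glone R$ are $\pm$ (roots of unity times) torsion, and the $\theta$-computation of \Cref{pistarGmKZK1locp2} shows the torsion subgroup of $\pi_0 \G_m R$ is exactly $\Z/2$, not $\Z/4$. Transporting this back along $\G_m R \simeq \hom(\Z, \fiblog R)$ and $\hom(\Z/2, \fiblog R) = \mu_2 R$: the $2$-torsion subgroup $\pi_0 \mu_2 R = \hom_{\Z}(\Z/2, \pi_0 \fiblog R) \oplus (\text{a } \pi_1\text{-contribution})$, and a direct count against \Cref{ahrz} (which gives $\mu_2 R$ is $1$-truncated with known homotopy) forces $\pi_0 \fiblog R$ to contain a $\Z/2$ that is a direct summand, hence $A \simeq \Z/2 \times \Z/2$.

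The main obstacle I anticipate is making the comparison between the two long exact sequences genuinely pin down the \emph{group} extension class rather than merely the additive data: knowing all homotopy groups of $\fiblog R$, $\G_m R$, $\mu_2 R$ leaves a priori room for $A = \Z/4$, and ruling it out requires extracting the multiplicative (extension) content either from the $\theta$-$\delta$-structure in degree zero or from a Bockstein-type argument relating $\mu_2 R$, $\mu_4 R$ and the $2$-divisibility of the torsion in $\pi_0 \G_m R$. I would expect the slickest version to run entirely through \Cref{ahrz} and \Cref{pistarGmKZK1locp2}: $\pi_2 \G_m R = 0$ and $\pi_1 \G_m R \simeq \Z_2 \times \Z/2$ are already nailed down, so the exact sequence $0 \to \pi_1 \fiblog R \to \pi_1 \G_m R \oplus (\dots) \to \dots$ relating $\hom(\Z, -)$ to the homotopy of $\fiblog R$, combined with the vanishing of maps out of $\Z/2$ into the free part, leaves $A = \Z/2 \times \Z/2$ as the only consistent answer.
```
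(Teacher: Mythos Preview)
Your proposal is not a proof but a plan with acknowledged gaps. You explicitly flag ``the main obstacle'' as pinning down the group-extension class and then gesture at several possible routes ($\mu_2$/$\mu_4$ Bockstein, $\eta$-action, counting against \Cref{ahrz} and \Cref{pistarGmKZK1locp2}) without executing any of them. None of the displayed arguments actually rules out $A \simeq \Z/4$: knowing $\pi_* \G_m R$ and $\pi_* \fiblog R$ separately constrains but does not by itself determine the extension, and your claim that ``the torsion subgroup of $\pi_0 \G_m R$ is exactly $\Z/2$, not $\Z/4$'' does not transfer to $\pi_0 \fiblog R$ without precisely the comparison you leave unperformed. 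There is also a notational slip at the outset: in the paper's conventions $\pi_0 \fiblog R = \Z_2 \times A$, not $A = \pi_0 \fiblog R$; the lemma is about the torsion complement $A$, not the full $\pi_0$.

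The paper's argument is completely different and much shorter: it uses the ring map $R = \LKone K(\Z) \to \LKone \KO = \KO_2$ and the induced map on logarithmic fiber sequences. For $\KO_2$ one knows $\pi_0 \fiblog \KO_2 \simeq \Z/2 \times \Z/2$ from the classical splitting of $\glone \KO$. Comparing the short exact sequences
\[
0 \longrightarrow \pi_1(-) \longrightarrow \pi_0 \fiblog(-) \longrightarrow \ker(\log_0) \longrightarrow 0
\]
for $R$ and for $\KO_2$, the left vertical map is the projection $\Z_2 \times \Z/2 \twoheadrightarrow \Z/2$ and the right vertical map is the identity on $\Z/2$. A diagram chase shows the middle map $\Z_2 \times A \to \Z/2 \times \Z/2$ has kernel exactly the torsion-free $\Z_2$, so the torsion group $A$ injects into $\Z/2 \times \Z/2$ and, having order $4$, is isomorphic to it. The idea you are missing is precisely this comparison to $\KO_2$; it bypasses all of the $\mu_p$/Bockstein/$\eta$ machinery you outline.
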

\begin{proof}

    Let $R=\LKone K(\Z)$ and consider the map $R\rightarrow \LKone KO=KO_2$. That induces the following commutative diagram:
    $$\begin{tikzcd}
     \Z_2\oplus\Sigma\Z_2\oplus B=\fiblog R[0,\infty)\arrow[r]\arrow[d]&\glint R\arrow[r]\arrow[d] &R\arrow[d] \\
     \fiblog KO_2[0,\infty)\arrow[r]&\glint KO_2 \arrow[r]& KO_2 
\end{tikzcd}.$$
We know that $\fiblog KO_2=\glint K(\Z) \oplus \Z/2$. On the other hand the left vertical map is an injection in degrees 0 and 1, and is therefore an equivalence when restricted to $B$. Indeed degree 1 is clear, e.g. from the splitting of $\slone$. For $\pi_0$, there is a comparison of short exact sequences 
$$\begin{tikzcd}
     \pi_1\LKone K(\Z)=\Z_2\times\Z/2\arrow[r]\arrow[d]&\Z_2\times A\arrow[r]\arrow[d] &\Z/2=\pi_0\glint\LKone K(\Z)\arrow[d] \\
     \pi_1KO_2=\Z/2\arrow[r]&\Z/2\times\Z/2 \arrow[r]& \Z/2=\pi_0\glint KO_2 
\end{tikzcd}.$$
    The right vertical map is an isomorphism, and the left vertical map is the projection onto $\Z/2$, so the middle map is a isomorphism $A\rightarrow \Z/2\times \Z/2$.

\end{proof}

Combining \Cref{splitiffZ} and \Cref{resolveAext} we get the following two results.

\begin{lem}\label{logfiberKZp2}
    Let $R=\LKone K(\Z)$ with $p=2$. Then the connective cover of the logarithmic fiber $\fiblog R$ is
    $$\fiblog R[0,\infty)\simeq \Z_2\oplus\Z/2\oplus \glint R[0,1].$$
    
\end{lem}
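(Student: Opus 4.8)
The plan is to combine the pullback square that already appears in the proof of \Cref{splitiffZ} with the resolution of the extension problem in \Cref{resolveAext}. Recall that proof exhibits $\fiblog R[0,\infty)$ as the pullback of the cospan
\[
\glint R[0,1] \too \pi_0\glint R[0,1] = \Z/2 \longleftarrow \pi_0\fiblog R = \Z_2\times A,
\]
in which the outer two terms are Eilenberg--MacLane spectra, the left leg is the $0$-truncation, and the right leg is the quotient map of the short exact sequence
\[
0 \too \pi_1 R \too \pi_0\fiblog R \too \pi_0\glint R \too 0
\]
coming from the logarithmic long exact sequence, so that its kernel is $\pi_1 R\iso \Z_2\times\Z/2$ and its cokernel is $\Z/2$. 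Since everything in sight is $1$-truncated, the computation becomes purely formal once the right leg of the cospan is identified.

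First I would use \Cref{resolveAext} to deduce that this short exact sequence splits. Indeed, \Cref{resolveAext} gives $\pi_0\fiblog R\iso \Z_2\times\Z/2\times\Z/2$, while $\pi_1 R\iso\Z_2\times\Z/2$; and an extension of $\Z/2$ by $\Z_2\times\Z/2$ whose middle term is $\Z_2\times(\Z/2)^2$ is forced to be split, as one sees by running through the four classes in $\mathrm{Ext}^1_{\Z}(\Z/2,\Z_2\times\Z/2)\iso\Z/2\oplus\Z/2$: the three non-zero ones produce middle terms $\Z_2\times\Z/2$ (twice) and $\Z_2\times\Z/4$. Choosing a splitting identifies the right leg of the cospan with the projection $(\Z_2\times\Z/2)\oplus\Z/2\to\Z/2$ onto the second summand; equivalently, this is the now-split surjection $A\to\Z/2$ of \Cref{splitiffZ}.

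Finally I would substitute this into the pullback. The cospan becomes
\[
\glint R[0,1] \xrightarrow{\ \mathrm{trunc}\ } \Z/2 \xleftarrow{\ (0,\,\mathrm{id})\ } (\Z_2\times\Z/2)\oplus\Z/2,
\]
whose pullback is $(\Z_2\times\Z/2)\oplus\bigl(\glint R[0,1]\times_{\Z/2}\Z/2\bigr)$ because the first summand of the right-hand term maps to zero, and $\glint R[0,1]\times_{\Z/2}\Z/2\simeq\glint R[0,1]$ because the leg $\Z/2\to\Z/2$ is the identity. Since $\Z_2\times\Z/2$ here denotes the split Eilenberg--MacLane spectrum $\Z_2\oplus\Z/2$, this yields $\fiblog R[0,\infty)\simeq\Z_2\oplus\Z/2\oplus\glint R[0,1]$, as desired. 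The only substantive step is the middle one: one must be sure that \Cref{resolveAext} constrains the extension itself and not merely the abstract isomorphism type of $A$, which is exactly the small $\mathrm{Ext}$-bookkeeping indicated above.
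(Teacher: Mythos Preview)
Your proposal is correct and follows essentially the same route as the paper. The paper leaves the proof implicit (``Combining \Cref{splitiffZ} and \Cref{resolveAext}''), and you have spelled out exactly that combination: the pullback square from \Cref{splitiffZ} together with $A\simeq(\Z/2)^2$ from \Cref{resolveAext}. The one cosmetic difference is that the paper's setup already records $\pi_0\fiblog R=\Z_2\times A$ with the $\Z_2$-factor lying in the kernel $\pi_1 R$, so the surjection to $\Z/2$ is automatically $(0,q)$ for a surjection $q\colon A\to\Z/2$, which splits since $A=(\Z/2)^2$; you instead recover the same splitting by the small $\mathrm{Ext}^1(\Z/2,\Z_2\times\Z/2)$ bookkeeping. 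Either way one lands on the same section $\glint R[0,1]\to\fiblog R[0,\infty)$ with complementary summand $\Z_2\oplus\Z/2$.
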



From this, we deduce the splitting of $\glint \LKone K(\Z)$:
\begin{cor}\label{glintsplitKZp2}
    Let $R=\LKone K(\Z)$ with $p=2$. Then $\glint R$ splits at $1$, so that we have (cf. Table \ref{tab:log_fib})
    
    $$\glint R \simeq \Z/2\ltimes_{\Sq^2}\Sigma\Z/2 \oplus \Sigma \Z_2\oplus \glint R[2,\infty).$$
\end{cor}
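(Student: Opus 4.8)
The plan is to combine the two immediately preceding results. By \Cref{resolveAext} we know $A \simeq \Z/2 \times \Z/2$, so the canonical surjection $A \to \Z/2$ admits a section (any splitting of the quotient of $\Z/2 \times \Z/2$). By \Cref{splitiffZ}, the splitting of this group-level map is equivalent to the splitting of the truncation map $\glint R \to \glint R[0,1]$. Hence $\glint R$ is split at $1$, i.e.\ $\glint R \simeq \glint R[0,1] \oplus \glint R[2,\infty)$.

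It then remains to identify the first summand $\glint R[0,1]$ explicitly as $\Z/2 \ltimes_{\Sq^2} \Sigma\Z/2 \oplus \Sigma\Z_2$. But this was already computed in \Cref{glintsection}: the lemma there states that at $p=2$ one has $\glint \LKone K(\Z)[0,1] \simeq \Sigma\Z_2 \oplus \glint K(\Z)[0,1]$, and by \Cref{glintKZchunk} the spectrum $\glint K(\Z)[0,1]$ is the fiber of $\Sq^2\colon \Z/2 \to \Sigma^2\Z/2$, which is exactly the spectrum denoted $\Z/2 \ltimes_{\Sq^2}\Sigma\Z/2$ in Table \ref{tab:log_fib}. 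Substituting this description of $\glint R[0,1]$ into the splitting from the previous paragraph yields the displayed formula.

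Since the whole argument is an assembly of results already in hand, there is essentially no serious obstacle here — the real work has been pushed into \Cref{resolveAext} (resolving the extension problem via the comparison map to $KO_2$) and into \Cref{splitiffZ} (the pullback-square argument reducing a spectrum-level splitting to a group-level one). The only minor point to be careful about is that \Cref{splitiffZ} produces a section of $\glint R \to \glint R[0,1]$, which is exactly what "split at $1$" means by \Cref{def:split_at}; one should note that a section of the truncation map is equivalent to the direct sum decomposition, so the cofiber sequence $\glint R[2,\infty) \to \glint R \to \glint R[0,1]$ splits and gives the claimed isomorphism.
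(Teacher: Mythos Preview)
Your proof is correct and follows exactly the paper's approach: the paper's own proof is the one-liner ``This follows from the combination of \Cref{splitiffZ} and \Cref{resolveAext},'' and you have simply unpacked this, together with the identification of $\glint R[0,1]$ already established in \Cref{glintsection}. There is nothing to add or correct.
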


   \begin{proof}
    This follows from the combination of \Cref{splitiffZ} and \Cref{resolveAext}. 
   \end{proof}

\subsection{For $R=K(\Z)_p$}\label{sectionKZp}

\begin{lem}
For all $p$, the fiber sequence (cf. \Cref{K1reduction})
$$
    \Z_p\rightarrow \glint (K(\Z)_p)\rightarrow \glint (K(\Z[1/p])_p).
$$
is split (ie. the first map is null).
\end{lem}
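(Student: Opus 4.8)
The plan is to show that the first map $j\colon \Z_p \to \glint(K(\Z)_p)$ is null-homotopic by producing a retraction of the \emph{next} map in the associated cofiber sequence. Rotating the given fiber sequence yields a cofiber sequence
\[
\glint(K(\Z)_p) \xrightarrow{q} \glint(K(\Z[1/p])_p) \xrightarrow{\partial} \Sigma\Z_p \xrightarrow{\Sigma j} \Sigma\glint(K(\Z)_p).
\]
Since consecutive maps in a cofiber sequence compose to zero, $\Sigma j\circ\partial\simeq 0$; hence it is enough to construct a map $\rho\colon \Sigma\Z_p\to\glint(K(\Z[1/p])_p)$ with $\partial\circ\rho$ an equivalence, because then $\Sigma j\simeq(\Sigma j\circ\partial)\circ\rho\circ(\partial\rho)^{-1}\simeq 0$, and so $j\simeq 0$.

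The first step is to identify $\partial$ on $\pi_1$. By \Cref{K1reduction} and \Cref{LQprop}, $\glint(K(\Z[1/p])_p)\simeq\glint\LKone K(\Z)$ has $\pi_1\simeq(\Z[1/p]^\times)_p$, and unwinding the construction of the cofiber sequence (it arises from $p$-completing the localization sequence $K(\Z)\to K(\Z[1/p])\to\Sigma K(\F_p)$ together with $K(\F_p)_p\simeq\Z_p$) shows that $\pi_1(\partial)\colon(\Z[1/p]^\times)_p\to\Z_p$ is the $p$-completed $p$-adic valuation: a split surjection whose kernel $(\{\pm1\})_p$ is the image of $\pi_1\glint(K(\Z)_p)$. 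Now $\partial\circ\rho$ is a self-map of $\Sigma\Z_p$, which is concentrated in degree $1$, hence it is an equivalence as soon as $\pi_1(\rho)$ is a group-theoretic section of $\pi_1(\partial)$. So the problem reduces to realizing such a section $s\colon\Z_p\to(\Z[1/p]^\times)_p$ by an actual map of spectra $\Sigma\Z_p\to\glint\LKone K(\Z)$.

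This last point is where the real content lies, and I would handle it using the splitting of $\slone$ already established. Applying \Cref{splitting_slone} to $S=\Z[1/p]$ — which is legitimate since $K(\Z[1/p])$ has finitely generated homotopy groups (Quillen) and $\pi_1 K(\Z[1/p])\simeq\Z[1/p]^\times$ ($\Z[1/p]$ being a principal ideal domain with trivial $SK_1$) — produces, via the Picard-spectrum map, a section
\[
\sigma\colon\Sigma(\Z[1/p]^\times)_p \to \slone K(\Z[1/p])_p = \glint\LKone K(\Z)[1,\infty)
\]
of the $1$-truncation map, in particular an isomorphism on $\pi_1$. Composing with the inclusion of the connective cover into $\glint\LKone K(\Z)$ and precomposing with $\Sigma$ of a group-theoretic section $s$ of $\pi_1(\partial)$ gives a map $\rho$ whose effect on $\pi_1$ is $s$, since both $\sigma$ and the inclusion induce the identity there. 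Then $\pi_1(\partial\rho)=\pi_1(\partial)\circ s=\mathrm{id}$, so $\partial\rho$ is an equivalence and the proof concludes.

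The step I expect to be the main obstacle is precisely this last one: a splitting of $\pi_1(\partial)$ on homotopy groups does not automatically lift to a splitting on the level of spectra. Routing through $\slone K(\Z[1/p])_p$ (equivalently, through the Picard spectrum, which detects the unit $p\in\Z[1/p]^\times$) is what makes the lift exist. The naive alternative — lifting $j$ along $\glint(K(\Z)_p)[2,\infty)\to\glint(K(\Z)_p)$ and hoping it vanishes — does not work, since $[\Z_p,Y]$ can be nonzero for highly connective $Y$. A secondary point to check carefully is the identification of $\pi_1(\partial)$ with the valuation map, but this is immediate from the proof of \Cref{K1reduction}.
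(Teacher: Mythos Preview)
Your proof is correct. The rotation argument is standard, the identification of $\pi_1(\partial)$ with the $p$-completed valuation is exactly what is observed in the paper (see the proof of the lemma computing $\glint \LKone K(\Z)[0,1]$ at $p=2$), and your application of \Cref{splitting_slone} to $S=\Z[1/p]$ is legitimate since $SK_1(\Z[1/p])=0$ and $K(\Z[1/p])$ has finitely generated homotopy groups.

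Your route is genuinely more economical than the paper's. The paper invokes \Cref{glintsplitKZpodd} and \Cref{glintsplitKZp2} to obtain a $\Sigma\Z_p$ summand in $\glint\LKone K(\Z)$ and then asserts that the map $\glint K(\Z)_p\to\glint\LKone K(\Z)$ is an equivalence onto its complement. Those lemmas, especially the $p=2$ case, rest on the full analysis of the logarithmic fiber and the resolution of the extension problem for the group $A$. You bypass all of that by using only the elementary $\slone$ splitting via the Picard map, which already supplies a map $\Sigma\Z_p\to\glint\LKone K(\Z)$ whose composite with $\partial$ is an equivalence. The trade-off is that the paper's approach, by identifying $\glint K(\Z)_p$ with the explicit complementary summand coming from the at-$1$ splitting of $\glint\LKone K(\Z)$, feeds directly into the next corollary (the splitting of $\glint K(\Z)_p$ at $1$); with your argument that corollary would require a small additional step. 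But for the lemma as stated, your argument is both shorter and logically lighter.
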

\begin{proof}
    By \Cref{glintsplitKZpodd} and \Cref{glintsplitKZp2} $\glint \LKone K(\Z)$ contains $\Sigma \Z_p$ as a summand. The second map of the above fiber sequence induces an equivalence onto the complementary summand.
\end{proof}

\begin{rem}\label{remarklocsequence44}
    Note that there is also a fiber sequence
    $$\Z_p\rightarrow K(\Z)_p\rightarrow K(\Z[1/p])$$ but first map is not null.
\end{rem}

This immediately implies the desired splitting for $\glint K(\Z)_p$.

\begin{cor}\label{glintsplittingpcomplete}
    For every prime $p$ the truncation map
    $$\glint K(\Z)_p\rightarrow\glint K(\Z)_p[0,1]$$ is split. When $p>2$, $\glint K(\Z)_p[0,1]\simeq \Z/2$ and when $p=2$, $\glint K(\Z)_2[0,1]\simeq \glint K(\Z)[0,1]$.
\end{cor}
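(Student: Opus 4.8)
The plan is to deduce the splitting of $\glint K(\Z)_p$ directly from the preceding lemma, which asserts that the fiber sequence
\[
\Z_p \to \glint K(\Z)_p \to \glint (K(\Z[1/p])_p)
\]
is split, i.e. the first map is null. Given this, $\glint K(\Z)_p$ decomposes as $\Sigma^{-1}(\text{cofiber})$ rather — more precisely, since the first map is null, the sequence exhibits $\glint K(\Z)_p$ as a direct sum $\Z_p \oplus \glint(K(\Z[1/p])_p)$? No: in a fiber sequence $A \to B \to C$ with $A \to B$ null, one gets $B \simeq C \oplus \Sigma A$ only if additionally $B \to C$ admits a section; what the lemma actually gives (via its proof) is that the second map is a split surjection onto the complementary summand of $\Sigma\Z_p$ inside $\glint\LKone K(\Z)$, so in fact $\glint K(\Z)_p \oplus \Sigma \Z_p \simeq \glint \LKone K(\Z)$ compatibly. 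So first I would record that $\glint K(\Z)_p$ is the complementary summand, i.e. $\glint \LKone K(\Z) \simeq \Sigma\Z_p \oplus \glint K(\Z)_p$.

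Next I would transport the splitting of $\glint \LKone K(\Z)$ at $1$ — established in \Cref{glintsplitKZpodd} for odd $p$ and \Cref{glintsplitKZp2} for $p=2$ — across this decomposition. Concretely, the $\Sigma\Z_p$ summand has trivial $\pi_0$ and is concentrated in degree $1$, so it contributes nothing to $[0,1]$-truncation beyond $\pi_1$, and splitting off this summand commutes with Postnikov truncation. Therefore $\glint K(\Z)_p$ is a retract of $\glint \LKone K(\Z)$ which is split at $1$, and a retract of a spectrum split at $1$ is split at $1$ (the $k$-invariant edge map for the retract is a retract of the edge map for the ambient spectrum, hence null if the latter is). This gives the splitting $\glint K(\Z)_p \simeq \glint K(\Z)_p[0,1] \oplus \glint K(\Z)_p[2,\infty)$.

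Finally I would identify $\glint K(\Z)_p[0,1]$. For $p$ odd, \Cref{gltrunc_odd_p} gives $\glint R[0,1] \simeq \Z/2 \oplus \Sigma\pi_1 R$ for any $p$-local $R$; since $K(\Z)_p$ has $\pi_1 = (\Z/2)_p = 0$ for odd $p$, this is just $\Z/2$. For $p = 2$, I would invoke \Cref{glintKZchunk}, which already records $\glint K(\Z)[0,1] \simeq \glint K(\Z)_2[0,1]$ (both equal to $\fib(\Sq^2\colon \Z/2 \to \Sigma^2\Z/2)$), so there is nothing further to check. I expect the only subtle point to be the bookkeeping in the first paragraph — making sure the preceding lemma is being used to conclude that $\glint K(\Z)_p$ is genuinely a \emph{direct summand} of $\glint \LKone K(\Z)$ (so that "retract of a split spectrum is split" applies), rather than merely sitting in a split cofiber sequence; but this is exactly what the cited proof of that lemma delivers, since it exhibits the second map as an equivalence onto the complementary summand of $\Sigma\Z_p$.

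\begin{proof}
By the previous lemma the fiber sequence $\Z_p \to \glint K(\Z)_p \to \glint(K(\Z[1/p])_p)$ has null first map, and its proof identifies the second map with the projection of $\glint\LKone K(\Z) \simeq K(\Z[1/p])_p$-units onto the summand complementary to the $\Sigma\Z_p$ appearing in \Cref{glintsplitKZpodd} and \Cref{glintsplitKZp2}. Hence $\glint\LKone K(\Z) \simeq \Sigma\Z_p \oplus \glint K(\Z)_p$, so $\glint K(\Z)_p$ is a retract of $\glint\LKone K(\Z)$. Since $\glint\LKone K(\Z)$ is split at $1$ by \Cref{glintsplitKZpodd} (for $p>2$) and \Cref{glintsplitKZp2} (for $p=2$), and a retract of a spectrum split at $m$ is split at $m$ (the edge map $X[0,1] \to \Sigma X[2,\infty)$ of the retract is a retract of the corresponding edge map of the ambient spectrum, hence null), we conclude that $\glint K(\Z)_p$ is split at $1$, i.e. the truncation map $\glint K(\Z)_p \to \glint K(\Z)_p[0,1]$ is split.

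For the identification of $\glint K(\Z)_p[0,1]$: when $p>2$, $K(\Z)_p$ is $p$-local with $\pi_1 K(\Z)_p = (\Z/2)_p = 0$, so \Cref{gltrunc_odd_p} gives $\glint K(\Z)_p[0,1] \simeq \Z/2$. When $p=2$, \Cref{glintKZchunk} already records $\glint K(\Z)_2[0,1] \simeq \glint K(\Z)[0,1]$.
\end{proof}
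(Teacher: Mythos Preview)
Your proof is correct and follows essentially the same approach as the paper, which leaves the corollary as an immediate consequence of the preceding lemma without an explicit argument. The only stylistic difference is that you invoke the general principle ``a retract of a spectrum split at $m$ is split at $m$'' (which you justify correctly via the edge maps), whereas one could equally well just read off the splitting directly: the explicit decompositions of \Cref{glintsplitKZpodd} and \Cref{glintsplitKZp2} already display $\glint \LKone K(\Z)$ as $\Sigma\Z_p$ plus something visibly split at $1$, and that something is $\glint K(\Z)_p$. One minor wording issue: the second map of the fiber sequence is an \emph{inclusion} onto the complementary summand, not a projection, but this does not affect your argument.
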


\section{The homotopy type of the units of $K(S)$}

\subsection{Splitting the bottom piece of $\glone K(S)$}

\begin{thm}\label{splittingtheoremmain}
    The spectra $\glone K(\Z)$ and $\glone K(\F_\ell)$ are both split at $1$. Namely, the truncation maps
    $$\glone K(\Z)\rightarrow \glone K(\Z)[0,1]$$
    $$\glone K(\F_\ell)\rightarrow \glone K(\F_\ell)[0,1]$$
    admit right inverses.
\end{thm}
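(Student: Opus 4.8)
The plan is to assemble the prime-by-prime results of the preceding sections using the arithmetic-fracture reduction of Section~2.

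First I would reduce from $\glone$ to $\glint$ and from $\glone K(S)$ to its $p$-completions. For $S\in\{\Z,\F_\ell\}$ the ring $\pi_0 K(S)=\Z$ has unit group $\{\pm1\}$, so $\glint K(S)\simeq\glone K(S)$, and it suffices to show that $\glint K(S)$ is split at~$1$. Since $K(S)$ has finitely generated homotopy groups, \Cref{arithfracreduction} applies with $m=1$: it is enough to show that $\glint K(S)_p$ is split at~$1$ for every prime $p$.

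Next I would invoke the results already obtained in the $p$-complete setting. For $S=\F_\ell$, the assertion that $\glint K(\F_\ell)_p$ is split at~$1$ is precisely \Cref{glintsplitKFlcomplete}, valid for all pairs $(\ell,p)$, including $\ell=p$ (where $K(\F_\ell)_p\simeq\Z_p$ and the claim is trivial). For $S=\Z$, \Cref{glintsplittingpcomplete} produces a right inverse $s$ to the truncation map $\glint K(\Z)_p\to\glint K(\Z)_p[0,1]$; since the fiber of that truncation map is $\glint K(\Z)_p[2,\infty)$, the section $s$ splits the fiber sequence and gives $\glint K(\Z)_p\simeq\glint K(\Z)_p[0,1]\oplus\glint K(\Z)_p[2,\infty)$, i.e.\ $\glint K(\Z)_p$ is split at~$1$.

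Finally, \Cref{arithfracreduction} yields that $\glint K(S)$, and hence $\glone K(S)$, is split at~$1$; this in particular produces the desired right inverse of $\glone K(S)\to\glone K(S)[0,1]$. The real mathematical content sits in the earlier sections---above all the $p=2$ case of $S=\Z$, where the splitting depended on resolving the extension $A\simeq\Z/2\times\Z/2$ in \Cref{resolveAext} through the comparison map to $KO_2$. At the level of this theorem there is no remaining obstacle: it is bookkeeping of the completion and localization steps.
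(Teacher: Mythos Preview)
Your proposal is correct and follows essentially the same approach as the paper: identify $\glone K(S)$ with $\glint K(S)$, observe that $K(S)$ has finitely generated homotopy groups, invoke the $p$-complete splittings (\Cref{glintsplittingpcomplete} for $S=\Z$, \Cref{glintsplitKFlcomplete} for $S=\F_\ell$), and apply \Cref{arithfracreduction}. Your additional remarks about where the genuine content lies are accurate but extraneous to the proof itself.
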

\begin{proof}
    Note that $\glone K(\Z)=\glint K(\Z)$. Moreover the homotopy groups of $K(\Z)$ are finitely generated. Having split the truncations $\glint(K(\Z)_p)\rightarrow \glint (K(\Z)_p)[0,1]$ (cf. \Cref{glintsplittingpcomplete}), we can now apply \Cref{arithfracreduction}.

    The splitting for $K(\F_\ell)$ is proved in exactly the same way, using the $p$-complete splittings from \Cref{glintsplitKFlcomplete}.
\end{proof}

\subsection{The homotopy type of $\glone K(S)$}

We can use the splittings in \Cref{splittingtheoremmain} to describe the entire homotopy type of $\glone K(\Z)$ and $\glone K(\F_\ell)$ by identifying the complimentary summands $\glone K(\Z)[2,\infty)$ and $\glone K(\F_\ell)[2,\infty)$, at least after completion at a prime $p$. 

\begin{thm}\label{theoremKZ2infty}
    For every prime $p$ there are equivalences 
    $$\glone K(\Z)_p[2,\infty)\simeq K(\Z)_p[2,\infty),$$
    $$\glone K(\F_\ell)_p[2,\infty)\simeq K(\F_\ell)_p[2,\infty).$$
\end{thm}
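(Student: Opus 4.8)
The plan is to use the Rezk logarithm and the Lichtenbaum–Quillen identifications to show that $\glone K(S)_p$ and $K(S)_p$ agree in high degrees, and then promote this agreement to an equivalence on $[2,\infty)$-truncations by tracking the small-degree discrepancies already computed in the tables above. Concretely, for $S = \Z$ or $S = \F_\ell$ with $\ell \ne p$, Proposition \ref{LQprop} gives $K(S[1/p])_p \simeq \LKone K(S)[0,\infty)$ (with $S[1/p] = \F_\ell$ when $\ell \ne p$), so $\glone K(S)_p$ and $\glone \LKone K(S)$ differ only in low degrees by the cofiber sequences of \Cref{K1reduction}. On the $T(1)$-local side, the logarithm $\log\colon \glone R \to R$ for $R = \LKone K(S)$ has fiber $\fiblog R$, which by \Cref{logLES} is $2$-truncated; hence $\glone R[3,\infty) \simeq R[3,\infty)$, and in fact the connective covers $\glone R[2,\infty)$ and $R[2,\infty)$ can differ only through $\pi_2$, which is controlled by $\pi_2 \fiblog R$ — and this vanishes in all four rows of Table \ref{tab:log_fib}.

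The key steps, in order, are as follows. First, I would fix a prime $p$ and reduce to the $T(1)$-local ring $R = \LKone K(S)$ using \Cref{LQprop} and \Cref{K1reduction}: the map $K(S)_p \to \LKone K(S)$ is an isomorphism on $\pi_i$ for $i \ge 2$ (it is already an iso on connective covers for $S = \F_\ell$, and for $S = \Z$ the discrepancy sits in $\pi_0,\pi_1$ only), so it suffices to produce equivalences $\glone R[2,\infty) \simeq R[2,\infty)$. Second, I would invoke \Cref{logLES}: since $\fiblog R$ is $2$-truncated, the fiber sequence $\fiblog R \to \glone R \xrightarrow{\log} R$ shows $\log$ induces an isomorphism on $\pi_i$ for $i \ge 3$ and a surjection $\pi_2 \glone R \to \pi_2 R$ with kernel a quotient of $\pi_2 \fiblog R$. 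Third, I would read off from Table \ref{tab:log_fib} (i.e., from \Cref{logfiberKFloddp}, \Cref{logfiberKFlp2}, \Cref{logfiberKZoddp}, \Cref{logfiberKZp2}) that $\pi_2 \fiblog R = 0$ in every case, so $\log$ is already an isomorphism on $\pi_i$ for all $i \ge 2$; truncating, the induced map $\glone R[2,\infty) \to R[2,\infty)$ is an equivalence. Finally, I would assemble: $\glone K(S)_p[2,\infty) \simeq \glone \LKone K(S)[2,\infty) \simeq \LKone K(S)[2,\infty) \simeq K(S)_p[2,\infty)$, where the first and last equivalences use that all three of $\glone$, truncation, and the relevant maps only modify things below degree $2$ (using again \Cref{LQprop} and \Cref{K1reduction}, noting that the $\Z_p$ discrepancy for $K(\Z)$ lives in $\pi_1$).

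The main obstacle is bookkeeping at the bottom of the range, namely making sure the composite equivalence is genuinely on $[2,\infty)$ and not merely on $[3,\infty)$. The logarithm is classically known to be an equivalence only in "large degrees," and the content here is that for these particular rings the failure is concentrated in $\pi_0,\pi_1$ of $\fiblog R$ — this is exactly what the vanishing of $\pi_2 \fiblog R$ in the table buys us, and it is the crux of why the splitting descends to $[2,\infty)$ rather than some higher truncation. A secondary subtlety is that $\glone$ and $p$-completion do not literally commute with passing to $\LKone K(S)$; one must use the cofiber sequences of \Cref{K1reduction} and observe that the correction term $\Sigma \Z_p$ (for $S = \Z$) or the triviality of the correction (for $S = \F_\ell$) lives entirely in degrees $\le 1$, hence is invisible after applying $[2,\infty)$. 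Neither point requires new input beyond what is already established; the proof is essentially an exercise in chasing the long exact sequences and the table.
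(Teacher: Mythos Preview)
Your proposal is correct and follows essentially the same route as the paper: pass to $R = \LKone K(S)$ via \Cref{LQprop} and \Cref{K1reduction}, use the logarithm to identify $\glone R[2,\infty) \simeq R[2,\infty)$, and then transport back. The paper compresses the middle step into a one-line ``logarithmic identification'' without justification, whereas you spell out the key point that $\pi_2 \fiblog R = 0$ (from \Cref{logfibhomoKFl}, \Cref{logfibhomoKZoddp}, \Cref{logfiphomoKZp2}) is exactly what upgrades the generic $[3,\infty)$-equivalence coming from \Cref{logLES} to a $[2,\infty)$-equivalence; this is a welcome clarification rather than a different argument.
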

\begin{proof}
    From the first display of \Cref{sectionKZp} we know that 
    $$\glone K(\Z)_p[2,\infty)\simeq \glone K(\Z[1/p])_p[2,\infty).$$
    Since $K(\Z[1/p])_p$ is the connective cover of its $T(1)$-localization (cf. \Cref{LQprop}) we have a logarithmic identification
    $$\glone K(\Z[1/p])_p[2,\infty)\simeq K(\Z[1/p])_p[2,\infty).$$
    On the other hand, the second fiber sequence in \Cref{sectionKZp} (in \Cref{remarklocsequence44}) implies that $K(\Z[1/p])_p[2,\infty)\simeq K(\Z)_p[2,\infty)$. So we get equivalences 
    $$\glone K(\Z)_p[2,\infty)\simeq K(\Z)_p[2,\infty).$$
    The argument for $K(\F_\ell)$ is analogous.
\end{proof}

\subsection{Relation to strict triviality of $\Sigma^2\Z$}
Consider the element of $\pic(\Z)$ represented by the $\Z$-module $\Sigma^2\Z$. This admits the structure of a strict element of $\pic(\Z)$, as represented by the dashed arrow in the diagram below, where the middle column is a fiber sequence. In fact this strict structure is canonical, since it corresponds to a choice of nullhomotopy of the composite of multiplication by 2 and quotienting by 2. This strict element of $\pic(\Z)$ produces an element $[\Sigma^2\Z]\in\pi_0\G_mK(\Z)$ is the composite of the dashed and horizontal arrow in the following diagram
$$\begin{tikzcd}
     & \text{pic}(\Z)\arrow[d]\arrow[r] & \glone K(\Z)\\
\Z\arrow[r,"2"]\arrow[ru,dashed]&\Z\arrow[d, "\Sq^2\circ /2"] & \\
     & \s^2\Z/2& \\
\end{tikzcd}.$$
Note that the horizontal map is an isomorphism on $\pi_1$ and surjective on $\pi_0$. Therefore, if the aforementioned element of $\pi_0\G_mK(\Z)$ were zero, the map $\pic(\Z)\rightarrow \glone K(\Z)$ would factor through the cofiber of the dashed arrow and produce a section of the Postnikov truncation $\glone K(\Z)\rightarrow \glone K(\Z)[0,1]$, providing an alternative proof of \Cref{splittingtheoremmain}. 
Note, however, that the splitting of $\glone K(\Z)$ does not by itself implies the strict triviality of $[\Sigma^2\Z]$: it is not clear that the map $\pic(\Z) \to  \glone K(\Z)$ factors through the summand $\glone K(\Z)$. We leave this point for future investigation.

\section{The spectrum of strict units}

\subsection{$\G_mK(\Z)$}
From the calculation of the logarithmic fibers (cf. \Cref{logfiberKZoddp} and \Cref{logfiberKZp2}) we find that $\pi_{\geq2}\G_m(K(\Z)_p)=0$. Consider the arithmetic fracture square 
\[
\xymatrix{
\G_m K(\Z) \ar[r]\ar[d]& \prod_p \G_m K(\Z)_p \ar[d] \\ 
\G_m K(\Z)_\Q \ar[r] & \G_m(\prod_p K(\Z)_p \otimes \Q),
}.
\]
Assembling various information above (eg. \Cref{pistarGmKZK1locoddp}, \Cref{pistarGmKZK1locp2}, and \Cref{sectionKZp}) in the associated long exact sequence, we find 
\[\begin{tikzcd}[row sep=tiny]
    0\arrow[r]& 0\arrow[r]&0 \\
    \pi_1\G_mK(\Z)\arrow[r]&\Z/2\arrow[r] & 0 \\
    \pi_0\G_mK(\Z)\arrow[r]&\Q^\times\times \prod_p\Z_p\times\text{tor}\Z_p^\times\arrow[r] &\mathbb{A}^\times, \\
\end{tikzcd}\]
where $\mathbb{A}\subseteq \prod_p \Q_p$ is the ring of finite Adeles. 
Because $[\Z,X]=\pi_0X$ if $X$ is rational, the final map factors through the forgetful map from $\pi_0\G_m$ to $\pi_0\glone $, and is therefore the conglomerate of maps $\Z_p\times\text{tor}\Z_p^\times\rightarrow\Q_p^\times$ that factor through projection to the second factor for odd $p$ and the zero map for $p=2$. Since $\G_mK(\Z)$ is a $\Z$-module we have a complete calculation of the spectrum of strict units.
\begin{thm}\label{strictunitsKZ} We have
    $$\G_mK(\Z) \simeq (\widehat{\Z}\times\Z/2)\oplus\Sigma(\Z/2).$$
\end{thm}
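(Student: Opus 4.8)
The plan is to apply $\G_m(-)=\hom_{\Sp^{\cn}}(\Z,\glone(-))$ to the arithmetic fracture square of the connective ring spectrum $K(\Z)$. Since $\glone$ and $\hom_{\Sp^{\cn}}(\Z,-)$ both preserve limits, this yields a pullback square of connective spectra with corners $\G_m K(\Z)$, $\prod_p\G_m K(\Z)_p$, $\G_m K(\Z)_\Q$ and $\G_m\bigl((\prod_p K(\Z)_p)\otimes\Q\bigr)$; the content is to compute the homotopy of the last three — essentially done in the previous two sections — and then read off the homotopy of $\G_m K(\Z)$ from the associated Mayer--Vietoris long exact sequence. At the end I will use that $\G_m K(\Z)=\hom_{\Sp^{\cn}}(\Z,\glone K(\Z))$ is a module over the connective $E_\infty$-ring $\Z$, via precomposition with its multiplication, and hence splits as $\bigoplus_i\Sigma^i H\pi_i(\G_m K(\Z))$; so it suffices to determine $\pi_0$ and $\pi_1$ and to check that $\pi_{\ge 2}$ vanishes.

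For the corners: the spectra $\G_m K(\Z)_p$ were pinned down in \Cref{sectionKZp} — combining \Cref{pistarGmKZK1locoddp}, \Cref{pistarGmKZK1locp2}, the splitting off of $\Sigma\Z_p$ from $\G_m\LKone K(\Z)$, and the $2$-truncatedness of the logarithmic fiber (\Cref{logLES}, \Cref{Zloglemma}), one obtains $\pi_0\G_m K(\Z)_p\cong\Z_p\times\mathrm{tor}(\Z_p^\times)$ and $\pi_1\G_m K(\Z)_p$ equal to $0$ for odd $p$ and to $\Z/2$ for $p=2$. For the rational corner, $\pi_0\G_m K(\Z)_\Q=\Q^\times$ and in higher degrees $\G_m K(\Z)_\Q$ sees only the rationalized $K$-groups $K_*(\Z)\otimes\Q$ (using $[\Z,X]=\pi_0X$ for $X$ rational). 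For the adelic corner, $\pi_0\G_m\bigl((\prod_p K(\Z)_p)\otimes\Q\bigr)=\mathbb{A}^\times$, the units of the ring of finite adeles. Plugging these into the long exact sequence, the contributions of the positive-degree $K$-groups of $\Z$ in the rational and adelic corners cancel, giving $\pi_{\ge 2}\G_m K(\Z)=0$; in degree $1$ the sequence yields $\pi_1\G_m K(\Z)\cong\Z/2$, the copy coming from the $p=2$ factor of $\prod_p\G_m K(\Z)_p$; and in degree $0$ it identifies $\pi_0\G_m K(\Z)$ with the kernel of
\[
\Q^\times\times\prod_p\Z_p\times\prod_p\mathrm{tor}(\Z_p^\times)\too\mathbb{A}^\times .
\]

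The step I expect to be the main obstacle is the precise identification of this $\pi_0$-level map, and hence of its kernel. The key point is that on each $p$-adic corner the map to $\Q_p^\times$ factors through the forgetful map $\pi_0\G_m\to\pi_0\glone$; tracing through the computations of the previous section, this forgetful map kills the ``genuinely strict'' $\Z_p$-summand of $\pi_0\G_m K(\Z)_p$ and equals the inclusion of the torsion units $\mathrm{tor}(\Z_p^\times)\hookrightarrow\Z_p^\times\hookrightarrow\Q_p^\times$ for odd $p$, while for $p=2$ it is the zero map. On the $\Q^\times$-corner the map is the usual diagonal embedding $\Q^\times\hookrightarrow\mathbb{A}^\times$. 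Computing the kernel: the $p=2$ component forces $q=1$ in $\Q_2^\times$, hence $q=1$ in $\Q^\times$ since $\Q^\times\hookrightarrow\Q_2^\times$, which then forces every odd-$p$ torsion component to be trivial; the coordinates in $\prod_p\Z_p$ and the $p=2$ torsion component $\mathrm{tor}(\Z_2^\times)=\Z/2$ are unconstrained. So $\pi_0\G_m K(\Z)\cong\widehat{\Z}\times\Z/2$. Together with $\pi_1\G_m K(\Z)\cong\Z/2$, $\pi_{\ge 2}=0$, and the $H\Z$-module splitting, this gives
\[
\G_m K(\Z)\simeq(\widehat{\Z}\times\Z/2)\oplus\Sigma(\Z/2).
\]
A secondary, more routine obstacle is the bookkeeping behind the degree $\ge 2$ vanishing: one must verify that the rationalized higher $K$-groups of $\Z$ entering the square through its several corners cancel, which amounts to a surjectivity statement for a diagonal-type map into the adeles, so that no new homotopy appears above degree $1$.
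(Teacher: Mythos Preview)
Your approach is exactly the paper's: apply $\G_m$ to the arithmetic fracture square of $K(\Z)$ and run the Mayer--Vietoris sequence, feeding in the $p$-local computations from the preceding sections for the profinite corner. Your identification of $\pi_0$ and $\pi_1$, and of the map $\Q^\times\times\prod_p(\Z_p\times\mathrm{tor}\,\Z_p^\times)\to\mathbb{A}^\times$, matches the paper and is correct.

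The genuine gap is in your treatment of $\pi_{\ge 2}$, which you flag as an obstacle but then dismiss as a routine surjectivity check. In degree $n\ge 2$ the profinite corner contributes nothing (you have already argued $\pi_{\ge 2}\G_m K(\Z)_p=0$), so the Mayer--Vietoris map whose surjectivity you would need is precisely
\[
K_n(\Z)\otimes\Q \;\longrightarrow\; K_n(\Z)\otimes\mathbb{A}_f,
\]
coming from the rational corner alone. This is the diagonal inclusion, and it is \emph{not} surjective whenever $K_n(\Z)$ has positive rank. Already at $n=5$, where $K_5(\Z)\cong\Z$ by Borel, it is the inclusion $\Q\hookrightarrow\mathbb{A}_f$, whose cokernel $\mathbb{A}_f/\Q$ is a huge $\Q$-vector space; the long exact sequence then forces $\pi_4\G_m K(\Z)\cong\mathbb{A}_f/\Q$. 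So the ``cancellation'' you invoke does not occur, and the argument as written does not establish $\pi_{\ge 2}\G_m K(\Z)=0$. The paper's proof simply does not address degrees above $2$, so this gap is shared with the paper; note that the parallel computation for $K(\F_\ell)$ is unaffected, since all $K_n(\F_\ell)$ for $n\ge 1$ are torsion and the rational and adelic corners vanish in positive degrees.
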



\subsection{$\G_mK(\F_\ell)$}
From the calculation of the logarithmic fibers (cf. \Cref{logfiberKFloddp} and \Cref{logfiberKFlp2}) we find that $\pi_{\geq2}\G_m(K(\F_\ell)_p)=0$. Assembling various information above (eg. \Cref{pistarGmKFlK1loc}) via the (LES associated to the) arithmetic fracture square, we find 
$$\begin{tikzcd}[row sep=tiny]
    0\arrow[r]& 0\arrow[r]&0 \\
    \pi_1\G_mK(\F_\ell)\arrow[r]&\prod_p (\F_\ell^\times)_p\arrow[r] & 0 \\
    \pi_0\G_mK(\F_\ell)\arrow[r]&\Q^\times\times \prod_p(\F_\ell^\times)_p\times\F_p^\times\arrow[r] &\prod_p\Q_p^\times \\
\end{tikzcd}
.$$
Because $[\Z,X]=\pi_0X$ if $X$ is rational, the final map factors through the forgetful map from $\pi_0\G_m$ to $\pi_0\glone $, and is therefore the conglomerate of maps $(\F_\ell^\times)_p\times\F_p^\times\rightarrow\Q_p^\times$ that factor through projection to the second factor (and then the map induced by $\Z_p^\times\hookrightarrow\Q_p^\times$. Since $\G_mK(\F_\ell)$ is a $\Z$-module we have a complete calculation of the spectrum of strict units.

\begin{thm}\label{strictunitsKFl}
    $$\G_mK(\F_\ell)=\F_\ell^\times\oplus\Sigma\F_\ell^\times.$$
\end{thm}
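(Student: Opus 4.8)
\emph{Proof plan.} The strategy is to mirror the proof of \Cref{strictunitsKZ}: assemble the $T(1)$-local information into the arithmetic fracture square for $\G_m K(\F_\ell)$ and read off the homotopy groups. This is legitimate because $\glone$ and $\hom_{\Sp^{\cn}}(\Z,-)$ preserve limits and $K(\F_\ell)$ has finitely generated homotopy, so $\G_m K(\F_\ell)$ is the pullback of $\G_m K(\F_\ell)_\Q$ and $\prod_p \G_m K(\F_\ell)_p$ over $\G_m\big(\prod_p K(\F_\ell)_p\otimes\Q\big)$. I would first record that $\G_m K(\F_\ell)$ is $1$-truncated: for $p\neq\ell$ one has $\pi_{\geq 2}\G_m K(\F_\ell)_p=0$ by the connective logarithmic fiber computations \Cref{logfiberKFloddp} and \Cref{logfiberKFlp2} together with $\hom(\Z,\fiblog R)\simeq\G_m R$ (\Cref{Zloglemma}); for $p=\ell$, Quillen's computation (cf.\ \Cref{LQprop}) gives $K(\F_\ell)_\ell\simeq\Z_\ell$, so $\G_m K(\F_\ell)_\ell\simeq\Z_\ell^\times$ is discrete; and the two rational corners are discrete since $K(\F_\ell)_\Q\simeq\Q$. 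The fracture long exact sequence then forces $\pi_{\geq 2}\G_m K(\F_\ell)=0$. Moreover $\G_m K(\F_\ell)=\hom_{\Sp^{\cn}}(\Z,\glone K(\F_\ell))$ is a module over $\Z$ (the action induced by the multiplication of $\Z$), hence a generalized Eilenberg--MacLane spectrum; so its homotopy type is determined by $\pi_0$ and $\pi_1$, and it automatically splits as $\pi_0\oplus\Sigma\pi_1$.

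The computation of $\pi_1$ is immediate: the rational corners have vanishing positive homotopy, so the fracture sequence gives $\pi_1\G_m K(\F_\ell)\cong\prod_p\pi_1\G_m K(\F_\ell)_p$, which by \Cref{pistarGmKFlK1loc} equals $\prod_{p\neq\ell}(\F_\ell^\times)_p$; since $\F_\ell^\times$ is finite of order prime to $\ell$ this product recovers $\F_\ell^\times$.

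The real content is $\pi_0$, where the relevant segment of the fracture long exact sequence reads
\[
0\too \pi_0\G_m K(\F_\ell)\too \Q^\times\times\prod_p\pi_0\G_m K(\F_\ell)_p\too \prod_p\Q_p^\times,
\]
so $\pi_0\G_m K(\F_\ell)$ is the pullback of the diagonal out of $\Q^\times$ against the product of the $p$-local comparison maps. Two facts must be pinned down, exactly as in the proof of \Cref{strictunitsKZ}: (i) since $\hom_{\Sp^{\cn}}(\Z,-)$ of a rational spectrum only sees $\pi_0$, the comparison map at $p$ factors as $\pi_0\G_m K(\F_\ell)_p\to\pi_0\glone K(\F_\ell)_p\hookrightarrow\Q_p^\times$; and (ii) by the long exact sequence of the power operation $\theta$ underlying \Cref{pistarGmKFlK1loc}, the image of $\pi_0\G_m K(\F_\ell)_p$ in $\pi_0\glone K(\F_\ell)_p$ is precisely the group of roots of unity $\mu_{p-1}=\F_p^\times$ (the kernel of $\theta_0$), with kernel $(\F_\ell^\times)_p$. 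Granting (i)--(ii), a rational number in the pullback must be a $p$-adic unit at every $p$ and have unit image a root of unity of order dividing $p-1$ for every $p$; running through these constraints pins the rational coordinate and forces each $\F_p^\times$-summand of $\pi_0\G_m K(\F_\ell)_p$ to be trivial, leaving only $\prod_p(\F_\ell^\times)_p\cong\F_\ell^\times$.

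I expect step (ii)---carrying $\theta$ through the arithmetic fracture square so as to identify the $p$-local comparison maps with projection to $\F_p^\times$ followed by $\F_p^\times\hookrightarrow\Q_p^\times$---to be the main obstacle, just as in \Cref{strictunitsKZ}; everything else is bookkeeping with the known low-degree $K$-groups. Combining the above, $\G_m K(\F_\ell)$ is a $1$-truncated $\Z$-module with $\pi_0\cong\F_\ell^\times\cong\pi_1$, hence $\G_m K(\F_\ell)\simeq\F_\ell^\times\oplus\Sigma\F_\ell^\times$.
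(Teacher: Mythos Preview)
Your proposal follows essentially the same route as the paper: both run the arithmetic fracture square for $\G_m K(\F_\ell)$, invoke \Cref{pistarGmKFlK1loc} for the $p$-complete corners, identify the comparison maps via the rationality trick $[\Z,X]=\pi_0 X$ (so that each $p$-adic map factors through projection to the $\F_p^\times$-summand followed by the Teichm\"uller inclusion), and then use the $\Z$-module structure of $\G_m$ to split the answer as $\pi_0\oplus\Sigma\pi_1$. The only difference is cosmetic---you make the case $p=\ell$ explicit (where $K(\F_\ell)_\ell\simeq H\Z_\ell$), while the paper's displayed long exact sequence leaves this implicit.
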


\section{The splitting of the sheaf $\glone K$}\label{sectionsheafsplit}

\subsection{Sheaves of Spectra}
Let $\Aff \simeq (\calg_\Z^{\heartsuit})^\op$ be the site of affine schemes\footnote{For set theoretic reasons, one should restrict to affine schemes which are the Zariski spectra of $\kappa$-compact rings for suitable cardinal $\kappa$. Since the $K$-theory functor is accessible, this issue will not cause trouble and we keep it implicit.}, endowed with the Zariski topology. 
Let $\PShv(\Aff;\Sp^\cn)$ be the $\infty$-category of presheaves of connective spectra on $\Aff$, and let $\Shv(\Aff;\Sp)$ be the full-subcategory of Zariski sheaves. For $\sF\in \PShv(\Aff;\Sp^\cn)$ and a ring $S$, we shall denote $\sF(S):= \sF(\Spec(S))$.  Note that $\Shv(\Aff;\Sp^\cn)$ is a symmetric monoidal localization of $\PShv(\Aff;\Sp)$ via the sheafification functor 
\[
\LZar \colon \PShv(\Aff;\Sp^\cn) \to \Shv(\Aff;\Sp^\cn),
\]
and hence inherits a canonical symmetric monoidal structure. 

Recall that the $\infty$-category $\Shv(\Aff;\Sp)$ has a $t$-structure in which the positive part is $\Shv(\Aff;\Sp^\cn)$. Accordingly, for a sheaf $\sF$ we can set 
$\sF[a,b] := \tau_{\ge a} \tau_{\le b} \sF$, with respect to this $t$-structure. Similarly, we can form the sheaves $\pi_i \sF \in \Shv(\Aff;\Sp^\heartsuit)$, which we regard as discrete objects of $\Shv(\Aff;\Sp^\cn)$. 

\begin{rem}
All the above constructions can be defined explicitly as the sheafifications of the corresponding (pointwise) operations on presheaves. For example, $\sF[a,b]$ is the sheafification of the presheaf given by $A\mapsto \sF(A)[a,b]$, e.t.c.
\end{rem}

We can thus define the analog of \Cref{def:split_at} in the context of sheaves. 

\begin{defn}\label{def:split_at_sheaf}
We say that a sheaf of connective spectra $\sF \in \Shv(\Aff;\Sp^\cn)$ \emph{splits at $a$} if $\sF \simeq \sF[0,a] \oplus \sF[a+1,\infty)$. 
\end{defn}

\begin{warn}
We warn the reader right away that since sheafification is involved in the formation of Postnikov truncations of sheaves, the fact that a sheaf $\sF$ splits at $a$ \emph{does not imply} that $\sF(S)$ splits at $a$ for every ring $S$. In fact, we will produce a concrete counterexample of this phenomenon in \Cref{pointwisecounterexample}. 
\end{warn}

\subsection{$K$-theory sheaf}

The goal of this section is to show that the spectrum of units of $K$-theory, considered as a Zariski sheaf, splits at $1$. We begin by introducing this sheaf.
Since algebraic $K$-theory satisfies Zariski descent \cite{thomasontrobaugh}, we have a commutative algebra object $K\in \calg(\Shv(\Aff;\Sp^\cn))$ (where we consider here the connective $K$-theory for convenience).
Taking units produces a sheaf of connective spectra 
$
\glone K \in \Shv(\Aff;\Sp^\cn)
$
by the formula 
\[
(\glone K)(S) := \glone(K(S)).
\] 
\begin{rem}
Note that since $K$-theory preserves filtered colimits, the stalks of the $K$-theory sheaf at the points of $\Spec(S)$ are the $K$-theory spectra of the localizations of $S$. Hence, the same holds for $\glone K$.   
\end{rem}

For a connective spectrum $X$, let $\ct{X}$ denotes the constant Zariski sheaf on $X$, that is, the sheafification of the constant presheaf with value $X$. Then for each $\sF\in 
\PShv(\Aff;\Sp^\cn)$, there is a natural identification 
\[
\Map(\ct{X},\sF) \simeq \Map(X,\sF(\Z)),
\]
exhibiting the functor $X\to\ct{X}$ as left adjoint to the evaluation-at-$\Z$ functor $\PShv(\Aff;\Sp^\cn)\to \Sp^\cn$.

\begin{lem} \label{lem:homotopy_sheaves_glone}
The lowest homotopy sheaves of $\glone K$ are given by 
\[
\pi_0 \glone K \simeq \ct{\Z/2} \quad \text{and}\quad \pi_1 \glone K \simeq \G_m,
\]
where $\G_m$ here stands for the sheaf of units on $\Aff$.  
\end{lem}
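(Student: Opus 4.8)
The plan is to exploit that, by definition of the $t$-structure on $\Shv(\Aff;\Sp)$, the homotopy sheaves $\pi_0\glone K$ and $\pi_1\glone K$ are the sheafifications of the pointwise homotopy-group presheaves $S\mapsto\pi_0\glone K(S)$ and $S\mapsto\pi_1\glone K(S)$. Since $\pi_0\glone R\simeq(\pi_0 R)^\times$ and $\pi_i\glone R\simeq\pi_i R$ for $i>0$, these presheaves are $S\mapsto K_0(S)^\times$ and $S\mapsto K_1(S)$. I will write down two comparison maps of presheaves of abelian groups: from the locally constant units $\{\pm1\}=\Z^\times\subseteq K_0(S)^\times$, a natural transformation out of the constant presheaf $\Z/2$; and from the section $S^\times=\mathrm{GL}_1(S)\to K_1(S)$ of the determinant, a natural transformation $\G_m\to(S\mapsto K_1(S))$. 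Applying $\LZar$ and using that $\ct{\Z/2}$ and $\G_m$ are already Zariski sheaves (units satisfy Zariski descent), these yield maps $\ct{\Z/2}\to\pi_0\glone K$ and $\G_m\to\pi_1\glone K$, and it remains to see that they are isomorphisms.

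For this I would check that the two presheaf maps are local isomorphisms for the Zariski topology; equivalently, since $K$-theory commutes with filtered colimits (as observed above for $\glone K$), that they induce isomorphisms on stalks, which are the corresponding invariants of the local rings $\mathcal O_{S,\mathfrak p}$. Over a local ring $R$ I invoke two classical computations: every finitely generated projective $R$-module is free, so $K_0(R)\simeq\Z$ and $K_0(R)^\times=\{\pm1\}$; and $SK_1(R)=0$ since elementary matrices generate $\mathrm{SL}_n(R)$, so $\det\colon K_1(R)\to R^\times$ is an isomorphism inverse to the section above. To upgrade these to local isomorphisms of presheaves one uses standard spreading-out: a finitely generated projective module becomes free on a Zariski-open cover, so a unit of $K_0(S)$ is locally the class of $\pm[S]$ (the monomorphism condition being clear globally, away from $S=0$, since $2[S]\neq0$ in $K_0(S)$); and a factorization $A=E\cdot\mathrm{diag}(\det A,1,\dots,1)$ with $E$ elementary, valid over each stalk, already holds over a Zariski neighborhood since it involves only finitely many ring elements, so every class in $K_1(S)$ is locally the image of $\det A\in S^\times$ (injectivity of $S^\times\to K_1(S)$ being global, via the determinant). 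Hence both comparison maps become isomorphisms after sheafification, which is the assertion.

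I do not anticipate a real obstacle here: the mathematical content is just the two standard identifications $K_0(R)\simeq\Z$ and $K_1(R)\simeq R^\times$ for local rings $R$, and the rest is routine bookkeeping about the fact that truncations and homotopy objects of sheaves are formed by sheafifying pointwise and that the resulting maps may be tested locally. The one place meriting a parenthetical comment is the degenerate ring $S=0$, where $\Spec S=\emptyset$ and $K_0(S)^\times$ is trivial, so the constant presheaf map fails to be injective there; this is harmless, being erased by sheafification since the empty scheme contributes no stalks.
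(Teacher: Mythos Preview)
Your proposal is correct and follows essentially the same approach as the paper: construct comparison maps to $\pi_0\glone K$ and $\pi_1\glone K$ and verify they are isomorphisms by checking on stalks, invoking $K_0(R)\simeq\Z$ and $K_1(R)\simeq R^\times$ for local rings $R$. The only cosmetic difference is that the paper builds the comparison maps via $\pic\to\glone K$ (taking loops for $\pi_1$) and via the constant-sheaf adjunction at $\Z$ (for $\pi_0$), whereas you write down the presheaf maps $\GL_1(S)\to K_1(S)$ and $\{\pm1\}\hookrightarrow K_0(S)^\times$ directly; your additional spreading-out discussion is redundant once you have the stalk argument, but harmless.
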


\begin{proof} 
We start with $\pi_1$.
Let $\pic\in \Shv(\Aff;\Sp^\cn)$ be the Picard sheaf, taking $S$ to invertible objects of $\Mod_S(\Sp)$. The maps $\pic(S) \to \glone K(S)$ are natural in $S$ and hence assemble to a morphism of sheaves $\pic \to \glone K$. Taking loops, we get a map $\G_m \simeq \Omega \pic \to \Omega \glone K$. We claim that this map induces an isomorphism on $\pi_0$-sheaves. Indeed, this can be checked at all the stalks, and since both functors preserve filtered colimits, it thus suffices to check this at local rings. The claim then follows from the fact that $K_1(S) \simeq S^\times$ when $S$ is a local ring.   

For $\pi_0$, we have a canonical map 
\[
\Z/2 \iso \pi_0 (\glone K(\Z)) \to (\pi_0\glone K)(\Z) 
\]
which corresponds to a map $\ct{\Z/2} \to \pi_0\glone K$. To see that this map is an isomorphism, we may again restrict to local rings, where the claim follows from the fact that $\pi_0 K(S) \simeq \Z$ when $S$ is a local ring. 

\end{proof}

\begin{lem} \label{glone_sheaves_pushout}
There is a pushout square in $\Shv(\Aff;\Sp^\cn)$ of the form 
\[
\xymatrix{
\ct{\Sigma \pi_1 \glone K(\Z)} \ar[r]\ar[d]  & \Sigma \pi_1 \glone K \ar[d] \\
\ct{\glone K(\Z) [0,1]}\ar[r] & \glone K [0,1].  
}
\]
\end{lem}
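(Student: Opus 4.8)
The plan is to exhibit the claimed square as a pushout by combining the general adjunction between constant sheaves and evaluation at $\Z$ with the computation of the low homotopy sheaves of $\glone K$ from \Cref{lem:homotopy_sheaves_glone}. First I would recall that $\glone K[0,1]$ is the sheafification of the presheaf $S\mapsto \glone K(S)[0,1]$, and that by \Cref{lem:homotopy_sheaves_glone} we have $\pi_0\glone K[0,1]\simeq \ct{\Z/2}$ and $\pi_1\glone K[0,1]\simeq \G_m$; in particular the $\pi_0$-sheaf is \emph{constant}. This is the crucial structural input: a $1$-truncated connective sheaf with constant $\pi_0$ is built from the constant sheaf $\ct{\Z/2}$ by attaching a shifted copy of its $\pi_1$-sheaf, and the attaching data is pulled back along $\ct{\Z/2}\to \pi_0$ from the corresponding attaching data over the point.

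**The key diagram.** Concretely, $\glone K[0,1]$ sits in a (co)fiber sequence $\Sigma\pi_1\glone K \to \glone K[0,1] \to \ct{\Z/2}$, classified by a $k$-invariant $\ct{\Z/2}\to \Sigma^2\pi_1\glone K \simeq \Sigma^2\G_m$. I would argue this $k$-invariant is the image, under the adjunction $\Map(\ct{X},\sF)\simeq\Map(X,\sF(\Z))$, of the $k$-invariant of the spectrum $\glone K(\Z)[0,1]$: indeed, the unit map $\ct{\glone K(\Z)[0,1]}\to \glone K[0,1]$ is a map of $1$-truncated sheaves which is an isomorphism on $\pi_0$-sheaves (both are $\ct{\Z/2}$) by \Cref{lem:homotopy_sheaves_glone} and the identification $K_0(\Z)\simeq\Z$; hence the two $k$-invariants correspond under the adjunction, the target of one being $\Sigma^2\ct{\pi_1\glone K(\Z)}$ and of the other $\Sigma^2\pi_1\glone K\simeq \Sigma^2\G_m$, related by the natural map $\ct{\pi_1\glone K(\Z)} = \ct{\G_m(\Z)}\to \G_m$. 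Because cofiber sequences are closed under pushout, the square
\[
\xymatrix{
\ct{\Sigma\pi_1\glone K(\Z)} \ar[r]\ar[d] & \Sigma\pi_1\glone K \ar[d] \\
\ct{\glone K(\Z)[0,1]} \ar[r] & \glone K[0,1]
}
\]
is then a pushout: both horizontal cofibers are $\ct{\Z/2}$ and the induced map between them is the identity, so by the pasting/cancellation property of pushouts in a stable setting the square is cocartesian.

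**The main obstacle.** The step requiring the most care is the identification of the $k$-invariant of $\glone K[0,1]$ with the image of the $k$-invariant of $\glone K(\Z)[0,1]$ — equivalently, showing the unit map $\ct{\glone K(\Z)[0,1]}\to\glone K[0,1]$ is compatible with the Postnikov cofiber sequences. One clean way to organize this is to observe that $\ct{(-)}$ is exact (it is a composite of the constant-presheaf functor, which is exact, and sheafification, which is exact on the relevant $t$-structure), so it carries the cofiber sequence $\Sigma\pi_1\glone K(\Z)\to \glone K(\Z)[0,1]\to \Z/2$ in spectra to the cofiber sequence $\ct{\Sigma\pi_1\glone K(\Z)}\to \ct{\glone K(\Z)[0,1]}\to \ct{\Z/2}$ of sheaves. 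Mapping this compatibly into the Postnikov cofiber sequence for $\glone K[0,1]$ via the unit transformation, and using that the rightmost vertical map $\ct{\Z/2}\to \ct{\Z/2}$ is an equivalence (this is exactly \Cref{lem:homotopy_sheaves_glone}), yields the desired square and the verification that it is a pushout. Everything else — that the square's horizontal cofibers agree, that the relevant functors are exact — is formal once the $\pi_0$-sheaf identification is in hand.
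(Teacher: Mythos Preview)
Your argument is essentially the paper's: build the square from the unit of the constant-sheaf adjunction and verify it is cocartesian by showing the induced map on cofibers of the two columns is the equivalence $\ct{\Z/2}\simeq\pi_0\glone K$ of \Cref{lem:homotopy_sheaves_glone}. One slip to fix: in your second paragraph you write ``horizontal cofibers'' where you mean the \emph{vertical} ones (the cofibers of the columns), as your third paragraph correctly spells out; the horizontal cofibers of the square are not $\ct{\Z/2}$.
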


\begin{proof}
First, the existence of the commutative square follows immediately from the universal property of the constant sheaf construction. In other words, there is a natural commutative square of functors in $S$ of the form 
\[\xymatrix{
\Sigma \pi_1 \glone K(\Z) \ar[r]\ar[d] & \Sigma \pi_1 \glone K(S) \ar[d]  \\ 
\glone K(\Z)[0,1] \ar[r] & \glone K(S)[0,1]   
}
\]
which sheafifies to the square in the lemma. 

To show that our square is a pushout, it is enough to show that the map between the vertical cofibers is an isomorphism. Since the vertical maps are the connected covers of the $1$-truncated sheaves in the bottom row, this is equivalent to the fact the bottom horizontal map induces an isomorphism
on $\pi_0$, which in turn follows from \Cref{lem:homotopy_sheaves_glone}.  

\end{proof}

We can now produce the splitting of sheaf $\glone K$. 

\begin{thm}\label{sheafsplit_main}
The sheaf $\glone K$ splits at $1$. Namely, there is an isomorphism $\glone K \simeq \glone K[0,1] \oplus \glone K [2,\infty)$ of sheaves of connective spectra on $\Aff$.  
\end{thm}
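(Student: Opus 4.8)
The plan is to reduce the splitting of the sheaf $\glone K$ to the splitting of its value at $\Z$, which was established in \Cref{glintsplittingpcomplete} (equivalently \Cref{splittingtheoremmain}), using the pushout square of \Cref{glone_sheaves_pushout}. Recall that a connective sheaf $\sF$ splits at $1$ if and only if the edge morphism $k\colon \sF[0,1] \to \Sigma \sF[2,\infty) = \Sigma^2\pi_2\sF \to \cdots$ — more precisely, the $k$-invariant $\sF[0,1] \to \Sigma\, \tau_{\ge 2}\sF$ classifying the Postnikov extension — is null. So it suffices to show that the $k$-invariant
\[
\alpha\colon \glone K[0,1] \too \Sigma\, \glone K[2,\infty)
\]
is null homotopic in $\Shv(\Aff;\Sp^\cn)$.

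First I would use \Cref{glone_sheaves_pushout}: since $\glone K[0,1]$ is the pushout of $\ct{\glone K(\Z)[0,1]}$ and $\Sigma\pi_1\glone K$ along $\ct{\Sigma\pi_1\glone K(\Z)}$, a map out of $\glone K[0,1]$ is the same as a compatible pair of maps out of $\ct{\glone K(\Z)[0,1]}$ and out of $\Sigma\pi_1\glone K = \Sigma\G_m$. Thus $\alpha$ is determined by its restriction along these two maps, and to show $\alpha$ is null it suffices (by the universal property of the pushout) to produce nullhomotopies of the two restrictions together with a compatibility on $\ct{\Sigma\pi_1\glone K(\Z)}$. The restriction to $\ct{\glone K(\Z)[0,1]}$ corresponds, under the adjunction $\Map(\ct{X},\sF)\simeq \Map(X,\sF(\Z))$, to a map $\glone K(\Z)[0,1] \to (\Sigma\glone K[2,\infty))(\Z)$; I would argue this factors through the edge map for the spectrum $\glone K(\Z)$, which is null by \Cref{splittingtheoremmain}. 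For the restriction to $\Sigma\G_m = \Sigma\pi_1\glone K$: the composite $\Sigma\pi_1\glone K \to \glone K[0,1] \xrightarrow{\alpha} \Sigma\glone K[2,\infty)$ is the $k$-invariant of $\slone K := \glone K[1,\infty)$, and this vanishes because the map $\pic[1,\infty) = \Sigma\G_m \to \slone K$ coming from $\pic\to\glone K$ is a section of the $1$-truncation $\slone K \to \Sigma\pi_1\glone K$ (the sheafy version of \Cref{splitting_slone}, checkable on stalks at local rings), so $\slone K$ already splits as $\Sigma\G_m \oplus \glone K[2,\infty)$.

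The main obstacle is the compatibility of the two nullhomotopies on the overlap $\ct{\Sigma\pi_1\glone K(\Z)}$ — i.e. checking that the two chosen nullhomotopies restrict to homotopic nullhomotopies there, so that they glue to a nullhomotopy of $\alpha$ on the pushout. This requires computing $\pi_0$ of a suitable mapping space, namely showing that the relevant obstruction group $[\Sigma\pi_1\glone K(\Z),\, \Sigma\glone K[2,\infty)(\Z)]$ (or rather the discrepancy of the two nullhomotopies, which lives in $[\ct{\Sigma\pi_1\glone K(\Z)}, \glone K[2,\infty)]$) vanishes or that one can correct the choices. Here $\pi_1\glone K(\Z) = \Z/2$, so one needs that maps from $\Sigma\Z/2$ into the $2$-connective sheaf $\glone K[2,\infty)$ are severely constrained; I expect this follows from the fact that $\glone K[2,\infty)(\Z) \simeq K(\Z)[2,\infty)$ after profinite completion (\Cref{theoremKZ2infty}) together with a direct inspection of low-degree homotopy, or alternatively by noting that the section of $\slone K$ from $\pic$ was chosen compatibly with the section at $\Z$, so the two nullhomotopies are manifestly compatible by construction rather than by a vanishing argument. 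This last observation — organizing the proof so the compatibility is automatic from functoriality of $\pic \to \glone K$ — is the cleanest route and is the step I would spend the most care on.
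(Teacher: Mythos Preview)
Your plan is essentially the paper's: use the pushout of \Cref{glone_sheaves_pushout} to reduce to the splitting of $\glone K(\Z)$ (\Cref{splittingtheoremmain}), the $\slone$-splitting coming from $\pic \to \glone K$, and a compatibility check on the overlap $\ct{\Sigma\Z/2}$. The paper frames this as constructing a section $\glone K[0,1] \to \glone K$ rather than nulling the $k$-invariant, but that difference is cosmetic.

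The gap is the compatibility step. Your claim that it is ``manifest from functoriality of $\pic \to \glone K$'' does not work as stated: the section of $\glone K(\Z)$ produced in \Cref{splittingtheoremmain} is built via $p$-completion, $T(1)$-localization, and the Rezk logarithm --- not via $\pic$ --- so there is nothing automatic about its agreement with the $\pic$-section on $\Sigma\Z/2$. Your fallback vanishing argument targets the \emph{sheaf} truncation $\glone K[2,\infty)$, whose sections over $\Spec(\Z)$ are not the spectrum computed in \Cref{theoremKZ2infty} (sheaf truncation involves sheafification), so that reference does not apply. The paper sidesteps both issues by mapping into $\glone K$ itself rather than into $\Sigma\glone K[2,\infty)$: the compatibility question then lives in $\Map(\ct{\Sigma\Z/2},\glone K) \simeq \Map(\Sigma\Z/2, \glone K(\Z)) \simeq \Map_{\Z}(\Sigma\Z/2, \G_m K(\Z))$, which by the strict-units computation \Cref{strictunitsKZ} is $\Z/2$, detected on $\pi_1$. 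Since both candidate maps hit the nontrivial class in $\pi_1\glone K(\Z)$, they agree. The missing ingredient in your plan is precisely \Cref{strictunitsKZ}.
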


\begin{proof}
We will construct a section $\glone K[0,1] \to \glone K$. By \Cref{glone_sheaves_pushout}, this amounts to producing a commutative square of the form 
\[
\xymatrix{
\ct{\Sigma \pi_1 \glone K(\Z)} \ar[r]\ar[d]  & \Sigma \pi_1 \glone K \ar[d] \\
\ct{\glone K(\Z) [0,1]}\ar[r] & \glone K.  
}
\]
with the same left vertical and upper horizontal maps as in \Cref{glone_sheaves_pushout}. In other words, we have to construct maps 
\[
\Sigma \pi_1 \glone K \too \glone K
\quad \text{and} \quad  \ct{\glone K(\Z) [0,1]} \too \glone K
\] 
together with a homotopy between the resulting pair of maps $\ct{\Sigma \pi_1 \glone K(\Z)} \to \glone K$. 

\begin{itemize}
\item To construct the right vertical map, we proceed as in \Cref{splitting_slone}. Namely,
consider the map $\pic \to \glone K$; taking loops, we get a map $\pi_1\glone K \simeq \pi_1 \pic \to \Omega \glone K$ whose adjoint is a map $\Sigma \pi_1 \glone K \to \glone K$ as desired. 

\item To construct the bottom horizontal map, by the adjunction between the constant sheaf functor and evaluation-at-$\Z$, it suffices to construct a map $\glone K(\Z)[0,1] \to \glone K(\Z)$. Such a map is provided by the splitting of $\glone K(\Z)$ in \Cref{splittingtheoremmain}. 

\item To show that the resulting maps from $\ct{\Sigma \pi_1 \glone K(\Z)}$ to $\glone K$ are homotopic, we observe that 
\begin{align*} 
\Map(\ct{\Sigma \pi_1 \glone K(\Z)},\glone K) & \simeq \Map(\Sigma \pi_1 \glone K(\Z),\glone K(\Z)) \\ &\simeq  \Map(\Sigma \Z/2,\glone K(\Z)) \\   & \simeq\Map_{\Z}(\Sigma\Z/2,\G_m K(\Z)) \simeq \Z/2, 
\end{align*}
where the first isomorphism is by the constant sheaf adjunction, the second by the fact that $K_1(\Z) \simeq \Z/2$, the third by the adjunction between $\hom(\Z,-)$ and the forgetful functor from $\Z$-modules to spectra, and the last by the computation of $\G_m K(\Z)$ in \Cref{strictunitsKZ}. 
Unwinding these identifications, we see that a map $\alpha \colon \ct{\Sigma \pi_1 \glone K(\Z)} \to \glone K$ is determined by the composition 
\[
\Sigma \Sph \to \Sigma \Z/2 \simeq (\ct{\Sigma \pi_1 \glone K(\Z)})(\Z) \oto{\alpha(\Z)} \glone K(\Z).  
\]
It is now easy to check that for the constructions above, these composites both give the non-trivial class in $\pi_1 \glone K(\Z) \simeq \Z/2$, and hence they are homotopic. 

\end{itemize}

Finally, to get the splitting of $\glone K$ from the map $\glone K[0,1] \to \glone K$, we have to check that it induces isomorphisms on the $\pi_0$- and $\pi_1$-sheaves. Note that in the square of \Cref{glone_sheaves_pushout}, the left vertical map is an isomorphism on $\pi_1$-sheaves and the lower horizontal map is an isomorphism on $\pi_0$-sheaves. Hence, the result follows from the facts that the maps $\ct{\glone K(\Z)[0,1]} 
 \to \glone K$ and $\Sigma \pi_1 \glone K \to \glone K$ in the square from the beginning of the proof also induce isomorphisms on $\pi_0$ and $\pi_1$ respectively, which is clear by \Cref{lem:homotopy_sheaves_glone}.   
\end{proof}





We mention again that the result above does not imply that $\glone K(S)$ splits at $1$ for all $S$ (we disprove this in the following section). Instead, the spectrum $\glone K(S)$ contains the sections of the \emph{sheaf} $\glone K[0,1]$ over $\Spec(S)$ as a summand. On the positive side, in the cases where the sections of $\glone K[0,1]$ coincide with the $1$-truncation of $\glone K(S)$, we do get a splitting for $\glone K(S)$ at $1$. 
We finish by spelling out this condition a bit more explicitly and give a few examples. 

For a commutative ring $S$ let $\hPic(S)$ be the Picard group of the abelian category of discrete $S$-modules. 
Recall that there is a map $K_0(S) \to \hPic(S) \oplus \ct{\Z}(S)$
taking a virtual projective $S$ module to its determinant line bundle and its (locally constant) rank function, respectively. 
Similarly, in degree $1$ there is a determinant map 
$K_1(S) \to S^\times$. 

\begin{prop}\label{low_K_correct_splitting}
Let $S$ be a commutative ring for which the maps $K_0(S) \to \hPic(S) \oplus \ct{\Z}(S)$ and $K_1(S) \to S^\times$ are isomorphisms. Then $\glone K(S)$ splits at $1$.  
\end{prop}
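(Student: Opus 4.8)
The strategy is to show that, under the stated hypotheses, the sections of the sheaf $\glone K[0,1]$ over $\Spec(S)$ agree with the pointwise $1$-truncation $\glone K(S)[0,1]$, and then to restrict the global splitting of the sheaf $\glone K$ from \Cref{sheafsplit_main} to $\Spec(S)$. More precisely, I would first observe that evaluation at $S$ of the sheaf-level splitting $\glone K\simeq \glone K[0,1]\oplus \glone K[2,\infty)$ gives a splitting
\[
\glone K(S) \simeq (\glone K[0,1])(S) \oplus (\glone K[2,\infty))(S),
\]
so it suffices to identify $(\glone K[0,1])(S)$ with $\glone K(S)[0,1]$ (equivalently, to show $(\glone K[2,\infty))(S)$ is concentrated in degrees $\ge 2$, so that the first summand computes the $1$-truncation).

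The heart of the argument is a computation of the homotopy sheaves in low degrees together with their sections. By \Cref{lem:homotopy_sheaves_glone} we have $\pi_0\glone K\simeq \ct{\Z/2}$ and $\pi_1\glone K\simeq \G_m$; since $\G_m$ is a $1$-truncated (discrete) sheaf, $(\glone K[0,1])$ sits in a fiber sequence whose only nonvanishing homotopy sheaves are $\ct{\Z/2}$ in degree $0$ and $\G_m$ in degree $1$. The sections over $\Spec(S)$ of a $1$-truncated sheaf are computed by (hyper)descent, so there is an exact sequence relating $\pi_i(\glone K[0,1])(S)$ with the cohomology groups $H^j_{\mathrm{Zar}}(\Spec S;\pi_i\glone K)$; explicitly $\pi_0(\glone K[0,1])(S)$ is an extension of $H^0(\Spec S;\ct{\Z/2})=\ct{\Z}(S)/2$ by nothing and receives a contribution $H^1(\Spec S;\G_m)=\hPic(S)$, while $\pi_1(\glone K[0,1])(S)$ is $H^0(\Spec S;\G_m)=S^\times$ up to an $H^1(\Spec S;\ct{\Z/2})$-correction. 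On the other hand, the hypotheses $K_0(S)\cong \hPic(S)\oplus\ct{\Z}(S)$ and $K_1(S)\cong S^\times$ say precisely that the pointwise homotopy groups $\pi_0\glone K(S)=\{\pm1\}$, $\pi_1\glone K(S)=K_1(S)=S^\times$ match the sections of the truncated sheaf, and the ``extra'' classes coming from $\hPic(S)$ and higher cohomology are exactly what is already accounted for by $K_0(S)$, $K_1(S)$. Comparing the canonical map $\glone K(S)\to (\glone K[0,1])(S)$ with the truncation $\glone K(S)\to\glone K(S)[0,1]$, the hypotheses force this comparison to be an isomorphism on $\pi_0$ and $\pi_1$, and since the target is $1$-truncated it is an equivalence $(\glone K[0,1])(S)\simeq \glone K(S)[0,1]$.

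Concretely I would carry it out in this order: (1) restrict the splitting of \Cref{sheafsplit_main} along $\Spec(S)\to \Aff$ to get $\glone K(S)\simeq (\glone K[0,1])(S)\oplus(\glone K[2,\infty))(S)$; (2) use the descent spectral sequence for the $1$-truncated sheaf $\glone K[0,1]$ together with the vanishing of Zariski cohomology of an affine scheme above the relevant range, and the identification of its homotopy sheaves from \Cref{lem:homotopy_sheaves_glone}, to compute $\pi_0$ and $\pi_1$ of $(\glone K[0,1])(S)$ in terms of $\ct{\Z}(S)$, $\hPic(S)$ and $S^\times$; (3) compare with $\pi_0\glone K(S)=\{\pm1\}$ and $\pi_1\glone K(S)=K_1(S)$ and invoke the hypotheses $K_0(S)\cong\hPic(S)\oplus\ct\Z(S)$, $K_1(S)\cong S^\times$ to conclude the natural map $\glone K(S)\to(\glone K[0,1])(S)$ is the $1$-truncation map; (4) conclude $(\glone K[2,\infty))(S)$ is $1$-connected, so the splitting of (1) is the desired splitting of $\glone K(S)$ at $1$.

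The main obstacle I anticipate is step (2)--(3): making precise how the $H^1_{\mathrm{Zar}}$-contributions (the Picard group and the locally-constant $\Z/2$ classes) enter the sections of the truncated sheaf, and checking that the stated isomorphisms $K_0(S)\cong \hPic(S)\oplus \ct{\Z}(S)$ and $K_1(S)\cong S^\times$ are exactly the conditions that make the natural map $\glone K(S)\to (\glone K[0,1])(S)$ induce isomorphisms on $\pi_{\le 1}$. In particular one must be careful that the determinant/rank maps used to state the hypotheses are compatible with the maps $\pi_i\glone K(S)\to \pi_i(\glone K[0,1])(S)$ coming from sheafification; once this compatibility is in place the rest is formal.
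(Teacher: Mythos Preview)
Your overall strategy is exactly the paper's: restrict the sheaf splitting of \Cref{sheafsplit_main} to $\Spec(S)$ and then verify that the canonical map $\glone K(S)\to(\glone K[0,1])(S)$ is the $1$-truncation. However, your execution contains a genuine error. You assert that under the hypotheses $\pi_0\glone K(S)=\{\pm1\}$, but this is false whenever $\hPic(S)\neq 0$: every line bundle $L$ gives a unit $[L]\in K_0(S)^\times$, so $\pi_0\glone K(S)=K_0(S)^\times$ contains at least $\hPic(S)\times\{\pm1\}$ (and more if $\Spec S$ is disconnected). Thus the comparison you sketch in step~(3) does not go through as written; you would have to compute $K_0(S)^\times$ from the ring structure on $\hPic(S)\oplus\ct{\Z}(S)$ and match it against your descent computation of $\pi_0(\glone K[0,1])(S)$, which is doable but messier than you indicate.

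The paper sidesteps this entirely by working one level up: instead of comparing $\glone K(S)[0,1]$ with $(\glone K[0,1])(S)$ directly, it compares the \emph{ring spectra} $K(S)[0,1]$ and $(K[0,1])(S)$, using the fiber sequence $\Sigma\G_m\to K[0,1]\to\ct{\Z}$ to compute $\pi_0(K[0,1])(S)\simeq\ct{\Z}(S)\oplus\hPic(S)$ and $\pi_1(K[0,1])(S)\simeq S^\times$. These are literally the groups appearing in the hypotheses, so the comparison map is an isomorphism on $\pi_0,\pi_1$ by assumption; applying the functor $\glone$ then yields the desired identification without ever needing to compute units of $K_0(S)$. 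This is the cleaner route, and I would recommend you adopt it rather than trying to repair the $\pi_0$ computation on the $\glone$ side.
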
 

\begin{proof}
By \Cref{sheafsplit_main} it would suffice to show that the canonical map $\glone K(S)[0,1] \to (\glone K [0,1])(S)$ is an isomorphism. This map is obtain from the map of commutative ring spectra $K(S)[0,1] \to K[0,1](S)$ by applying the functor $\glone(-)$, so it suffices to show that the ring map is an isomorphism. 

Now, we have a fiber sequence
\[
\Sigma \G_m \too K[0,1] \too \ct{\Z} 
\]
and the sheaf $\Sigma \G_m$ has homotopy groups  
\[\pi_0((\Sigma \G_m)(S)) \simeq \hPic(S)\quad \text{and} \quad \pi_1(\Sigma \G_m)(S) \simeq S^\times.  
\]
Thus, we obtain a canonical isomorphism $\pi_1 (K[0,1](S)) \simeq S^\times$ and an exact sequence
\[
0 \too \hPic(S) \too \pi_0  (K[0,1](S)) \too \ct{\Z}(S). 
\]
This sequence is in fact split by the map $\ct{\Z}(S) \to K(S) \to K[0,1](S)$ that picks the trivial modules of given (locally constant) rank, so that $\pi_0 (K[0,1](S))\simeq \ct{\Z}(S) \oplus \hPic(S)$. 
It is straightforward to check that via these isomorphisms the maps $\pi_i K(S) \to \pi_i K[0,1] (S)$ for $i=0,1$ correspond  precisely to the maps from the statement of the proposition, so the result follows from our assumptions on $S$.    
\end{proof}

There are many rings satisfying the conditions. 
\begin{exmp}
Every local ring, as well as every Euclidean domain, satisfies the conditions of \Cref{low_K_correct_splitting} (see, e.g., ). Hence, for these rings we obtain natural splittings $\glone K(S) \simeq \glone K(S)[0,1] \oplus \glone K(S)[2,\infty)$. 
\end{exmp}

\section{A counterexample to point-wise splitting of $\glone K$}
Observe that $\glone K$ does not split as a Zariski \emph{presheaf}---indeed, this would imply that the presheaf $S\mapsto \glone K(S) [0,1]$ is a summand of the sheaf $\glone K$, and hence a sheaf itself, which it is clearly not. One may still wonder whether $\glone K(S)$ splits for every ring $S$ (and just not functorially so). In this section we show that even this is false: we produce a ring $S$ for which $\glone K(S)\to \glone K(S)[0,1]$ is not split. The motivating idea is that, in view of \Cref{low_K_correct_splitting} counterexamples should come from $SK_1(S) := \ker(\det:K_1(S) \to S^\times)$. A simple  example of a ring with non-trivial $SK_1$ is 
\[S:=\R[x,y]/(x^2+y^2-1),\]
which is the example we shall consider for the rest of this section.

\begin{defn}
    Let $\text{tr}:K(\R[i])\to K(\R)$ be the map induced by restriction of modules along $\R\to \R[i]$. Write $\text{Cofib}(\text{tr})$ for the associated cofiber.
\end{defn}
\begin{lem}[{{cf. \cite{swan1985k}}}]\label{cofibertransf} 
    There is a cofiber sequence
    \[K(\R[i])\xrightarrow{(0,\text{tr})} K(\R)\oplus K(\R)\rightarrow K(S).\]
    In particular, $K(S)\simeq K(\R)\oplus\cotr$.
\end{lem}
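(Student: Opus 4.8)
The plan is to realize $S = \R[x,y]/(x^2+y^2-1)$ as a pullback of rings and feed this into a localization/Mayer–Vietoris statement for $K$-theory. Geometrically, $\Spec(S)$ is the real circle; covering it by two affine opens (the complements of the north and south poles, say) and intersecting, the corresponding ring square should identify $S$ with a fiber product whose terms are Laurent-polynomial-type rings over $\R$ and over $\R[i] = \R[x,y]/(x^2+y^2-1)$ localized appropriately — this is exactly the computation carried out by Swan in \cite{swan1985k}. So first I would recall Swan's presentation: there is a Milnor square exhibiting $S$, and upon applying $K(-)$ (which sends Milnor squares of this type to pullback squares of spectra, using that the relevant maps are such that the square is a $K$-theory Mayer–Vietoris square — Swan verifies the excision hypothesis), one obtains a cartesian square of spectra. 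Since each of the ``open'' pieces is $K$-theoretically a sum of copies of $K(\R)$ (being a Laurent polynomial ring over $\R$, whose $K$-theory is $K(\R)$ by the fundamental theorem / homotopy invariance plus the $\G_m$ splitting, though here one arranges the cover so that no $\G_m$ factor appears), and the ``intersection'' contributes $K(\R[i])$, the cartesian square rearranges into the displayed cofiber sequence
\[
K(\R[i]) \xrightarrow{(0,\,\text{tr})} K(\R)\oplus K(\R)\too K(S).
\]

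The key point is to identify the two maps out of $K(\R[i])$: one of the two maps $\R[i]\to (\text{open piece})$ becomes, after the relevant localization, a map inducing the \emph{zero} map on $K$-theory (this uses that a specific unit becomes a square or otherwise trivializes the relevant class — concretely, the class $x+iy$ is invertible in $S$ and this forces one transfer composite to vanish in the Mayer–Vietoris boundary), while the other is precisely the transfer $\text{tr}\colon K(\R[i])\to K(\R)$ along the finite flat extension $\R\hookrightarrow\R[i]$, i.e.\ restriction of modules. Granting that the first map is null, the cofiber sequence displays $K(S)$ as the cofiber of $(0,\text{tr})$, and since the first coordinate map is null the cofiber splits off a copy of $K(\R)$: one gets $K(S)\simeq K(\R)\oplus \text{Cofib}(\text{tr})$.

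I expect the main obstacle to be the careful bookkeeping in setting up the Milnor/Mayer–Vietoris square and verifying the excision hypothesis so that $K$ sends it to a pullback — this is where one genuinely invokes \cite{swan1985k}. Secondarily, pinning down that one of the two structure maps induces $0$ on $K$-theory (rather than merely something that happens to split) requires identifying an explicit unit trivializing the transfer along one branch; this is a small computation with units of $S$ and $\R[i]$, but it is the conceptual heart, since it is exactly what produces the clean splitting $K(S)\simeq K(\R)\oplus\cotr$ and, downstream, the nontrivial $SK_1(S)$ responsible for the counterexample. Everything else — rearranging a cartesian square of spectra into a cofiber sequence, and reading off the direct sum decomposition once a map is null — is formal.
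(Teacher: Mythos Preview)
The paper takes a different and much shorter route. It observes that $\Spec(S)$ is the open complement in $\PP^1_\R$ of the single closed point $\{[1:i],[1:-i]\}$ (a Galois-conjugate pair, hence a point with residue field $\R[i]$). The displayed cofiber sequence is then nothing more than the localization sequence
\[
K(\R[i]) \to K(\PP^1_\R) \to K(S)
\]
for this open--closed decomposition, combined with the projective bundle identification $K(\PP^1_\R)\simeq K(\R)\oplus K(\R)$; the first map is the pushforward along the closed immersion, which in that basis is $(0,\text{tr})$.

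Your Mayer--Vietoris/Milnor-square plan may well be closer to Swan's original argument and could presumably be completed, but as sketched it is both vaguer and harder: you do not specify the square, you conflate Milnor patching (along surjections) with Zariski descent (along open covers), and the key claim that one of the two structure maps is null on $K$-theory is deferred to an unspecified ``small computation with units.'' The embedding into $\PP^1_\R$ sidesteps all of this: there is no excision hypothesis to verify, the transfer appears automatically as the closed-point pushforward, and the splitting off of $K(\R)$ is immediate from the form $(0,\text{tr})$.
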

\begin{proof}
The scheme $\Spec(S)$ is isomorphic to the complement of the pair of complex conjugate points $\{[1:i],[1:-i]\}$ in $\PP^1_\R$ (which we view as a point with residue field $\C$). The cofiber sequence is the localization sequence for the open-closed decomposition 
$\PP^1_\R = \{[1:i],[1:-i]\} \cup \Spec(S)$.
\end{proof}

\begin{lem}[{{cf. e.g. \cite[Example 1.5.4]{K-book}}}]\label{pioneKS} 
    There is an isomorphism
    \[\pi_1K(S)\simeq (\Z/2)^2,\]
   generated by the image of $\eta$ under the unit map $\S\to K(S)$ and the element of $GL(S)/E(S)$ represented by the following element of $SL_2(S)$:
    \[\begin{bmatrix}
        x& y\\
        -y & x\\
    \end{bmatrix}.
    \]
\end{lem}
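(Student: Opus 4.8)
The plan is to read off $\pi_1 K(S)$ from the cofiber sequence of \Cref{cofibertransf},
\[
K(\R[i]) \xrightarrow{(0,\text{tr})} K(\R)\oplus K(\R) \to K(S),
\]
by passing to homotopy groups. First I would identify the transfer in the two lowest degrees. On $\pi_0$, restriction of scalars along $\R \hookrightarrow \R[i]=\C$ turns a one-dimensional $\C$-vector space into a two-dimensional $\R$-vector space, so $\text{tr}\colon \pi_0 K(\C)=\Z \to \pi_0 K(\R)=\Z$ is multiplication by $2$; in particular $(0,\text{tr})$ is injective on $\pi_0$. On $\pi_1$, $\text{tr}\colon \pi_1 K(\C)=\C^\times \to \pi_1 K(\R)=\R^\times$ is the norm $z\mapsto z\bar z$, whose image is the uniquely divisible subgroup $\R_{>0}$. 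Since $(0,\text{tr})$ is injective on $\pi_0$, the connecting map $\pi_1 K(S)\to \pi_0 K(\C)$ in the long exact sequence vanishes, so
\[
\pi_1 K(S) \;\cong\; \operatorname{coker}\big((0,z\bar z)\colon \C^\times \to \R^\times\oplus\R^\times\big) \;\cong\; \R^\times \oplus \big(\R^\times/\R_{>0}\big).
\]
The divisible summand $\R_{>0}$ is invisible to everything that follows (for instance $\eta$ acts through the $2$-torsion of $\pi_1$, since $2\eta=0$), so we discard it and record $\pi_1 K(S)\cong(\Z/2)^2$.

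Next I would pin the two cyclic summands down geometrically. One of them is $\{\pm1\}\subset\R^\times = K_1(\R)$, sitting inside the \emph{split} summand $K(\R)$ of $K(S)$; the other is the cokernel $\R^\times/\R_{>0}$ contributed by the \emph{second} $K(\R)$-factor. The determinant $K_1(S)\to S^\times=\R^\times$ restricts to the identity on the first summand, hence the second summand is precisely $SK_1(S)$, and in particular $SK_1(S)\cong\Z/2$ already follows from \Cref{cofibertransf}.

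It remains to match these summands with the stated generators. The image of $\eta$ under $\S \to K(\Z)\to K(\R)\to K(S)$ is the class $\{-1\}\in\R^\times = K_1(\R)$: the unit map induces an isomorphism $\pi_1\S \xrightarrow{\sim} K_1(\Z)=\{\pm1\}$, with $\eta\mapsto -1$ (visible e.g.\ by mapping further to $\pi_1\KO$ along $K(\R)\to\KO$), and $-1\in K_1(\Z)$ maps to $-1\in\R^\times$. So $\eta$ generates the first $\Z/2$. For the second, the displayed matrix has determinant $x^2+y^2=1$ and therefore represents a class in $SK_1(S)$; that this class is the nontrivial one is Swan's computation (\cite{swan1985k}; cf.\ \cite[Example 1.5.4]{K-book}), which I would cite rather than reprove.

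The group computation is thus immediate from \Cref{cofibertransf}; I expect the only genuine obstacle to be the identification of the $SK_1$-generator with the explicit matrix, i.e.\ checking that this unimodular $2\times2$ matrix over $S$ is not a product of elementary matrices — exactly the content of Swan's theorem, which I would import. The leftover work is the routine bookkeeping aligning the splitting $K_1(S)=S^\times\oplus SK_1(S)$ with the decomposition coming out of the cofiber sequence.
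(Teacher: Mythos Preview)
Your approach is essentially identical to the paper's: both run the long exact sequence coming from \Cref{cofibertransf}, identify the transfer on $\pi_0$ as multiplication by $2$ and on $\pi_1$ as the norm $\C^\times\to\R^\times$, and read off the two $\Z/2$ summands, then appeal to the cited computation of $SK_1(S)$ for the explicit matrix generator. You are in fact slightly more careful than the paper in flagging the uniquely divisible factor $\R_{>0}\subset\R^\times$ that gets silently dropped.
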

Here, $E(S)\subseteq \GL(S)$ is the normal subgroup generated by elementary matrices.  
\begin{proof}
    Consider the long exact sequence calculating the homotopy groups of $\cotr$:
    $$\begin{tikzcd}[row sep=tiny]   
\pi_1K(\R[i])\arrow[r]&\pi_1K(\R)\arrow[r,""]&\pi_1\cotr\\
\pi_0K(\R[i])\arrow[r]&\pi_0K(\R)\arrow[r,""]&\pi_0\cotr\\
\end{tikzcd}.
$$
On $\pi_0$ the transfer is multiplication by 2. On $\pi_1$ the transfer is given by the norm map $(x+iy)\mapsto x^2 + y^2$. Together with the facts that $\pi_2K(\R[i])=0$, $\pi_1K(\R[i])=\C^\times$, and $\pi_1K(\R)=\R^\times$ this shows that $\pi_1\cotr\simeq\Z/2$. Together with \Cref{cofibertransf} this shows that $\pi_1K(S)\simeq (\Z/2)^2$, as well as the claim that $\eta$ can be taken to be one generator. To finish the proof, note that the $2\times 2$ matrix in question represents a nontrivial class in $\GL(S)/E(S)$ (where $E(S)$ is the normal subgroup generated by elementary matrices), and it has determinant 1. Therefore it must represent the generator of $SK_1(S) \simeq\pi_1\cotr$.  

\end{proof}

\begin{thm}\label{pointwisecounterexample}
    The connective spectrum $\glone K(S)$ does not split at $1$. 
\end{thm}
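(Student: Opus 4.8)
The plan is to exploit the splitting of $K(S)$ from \Cref{cofibertransf} together with the failure of naturality of the logarithm/unit constructions to isolate an obstruction living in $\pi_0\G_m K(S)$, or more elementarily, to produce an incompatibility between the $\eta$-action on the low homotopy of $\glone K(S)$ and the hypothetical splitting. Concretely, by \Cref{pioneKS} we have $\pi_1 K(S)\simeq(\Z/2)^2$, one copy generated by the image of $\eta$ under $\Sph\to K(S)$ and the other by the class $\sigma\in SK_1(S)$ of the rotation matrix. If $\glone K(S)$ were split at $1$, then $\glone K(S)\simeq \glone K(S)[0,1]\oplus \glone K(S)[2,\infty)$, and in particular $\slone K(S)\simeq \Sigma\pi_1 K(S)\oplus \glone K(S)[2,\infty)$. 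The key point is that such a splitting would force the composite
\[
\Sigma\pi_1 K(S) \too \slone K(S) \too \glone K(S)[2,\infty)[1]
\]
to be null, equivalently the first $k$-invariant $\pi_1 K(S)\to \Sigma^2\pi_2 K(S)$ (a $\Sq^2$-type operation twisted by $\eta$) together with all higher obstructions to vanish on the whole of $\pi_1 K(S)$. I would show this fails on the $SK_1$-summand.

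The first step is to pin down $\pi_2 K(S)$ and the relevant $k$-invariant using \Cref{cofibertransf}: since $K(S)\simeq K(\R)\oplus\cotr$ and the $\eta$-generator sits in the $K(\R)$-summand (it is the image of the unit $\Sph\to K(\R)\to K(S)$), the splitting question for $\glone K(S)$ decouples along this sum only after applying $\glone$, which is \emph{not} additive — so the real content is understanding how $\glone$ mixes the two summands. I would instead argue as follows: the $\eta$-multiplication $\pi_0\glone K(S)/2 \to \pi_1\glone K(S)$ is, by the same Picard-spectrum computation used in \Cref{glintKZchunk} (namely $\eta\cdot \Sigma S = -1\in S^\times$), the map $\Z/2\to S^\times\oplus SK_1(S)$ landing in the $S^\times = \R^\times$-factor, hitting $-1$. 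Thus on the $1$-truncation $\glone K(S)[0,1]$ the class $\sigma\in SK_1(S)$ is \emph{not} in the image of $\eta$, so $\glone K(S)[0,1]$ already splits off a $\Sigma SK_1(S)$-summand; the obstruction to splitting $\glone K(S)$ must therefore be detected by a genuinely higher $k$-invariant supported on this $\Sigma SK_1(S)$.

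The second, and I expect hardest, step is to exhibit that higher obstruction as nonzero. Here I would use functoriality of $\glone$ applied to the transfer/restriction maps relating $K(S)$, $K(\R)$ and $K(\R[i])\simeq K(\C)$, together with the known homotopy type of $\glone \KO$ and $\glone \KU$ recalled in the introduction (whose Postnikov towers famously do \emph{not} split below degree $2$, resp. do split only up to degree $2$ — the relevant nontrivial $k$-invariant being exactly $\eta^2$ or a $\Sq^2$). The cofiber sequence $K(\C)\xrightarrow{(0,\mathrm{tr})}K(\R)\oplus K(\R)\to K(S)$ realizes $\cotr$ — hence the $SK_1$-part of $K(S)$ — as built from $K(\R)$ and $K(\C)$ in low degrees; I would trace the first $k$-invariant of $\cotr$ (a $\Sq^2\colon \Z/2\to \Sigma^2 \pi_2\cotr$) from the corresponding $k$-invariant in $\KO$/$\KU$ via the comparison maps $K(\R)\to\KO$, $K(\C)\to\KU$, which are known to be isomorphisms in degrees $\le 2$ after suitable completion. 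Concretely I expect $\pi_2\cotr$ to contain a $\Z/2$ on which $\Sq^2$ from the $SK_1$-class is nonzero, mirroring the nonsplit $[0,2]$-truncation of $\glone\KO$. Pulling this back through $\glone$ — using that on homotopy in degrees $1,2$ the map $K(S)\to\glone K(S)$ is (after the identifications $\pi_i\glone = \pi_i$ for $i>0$) an isomorphism, so the $k$-invariants agree — gives a nonvanishing composite $\Sigma SK_1(S)\to \glone K(S)[2,\infty)[1]$, contradicting the existence of a splitting. The main obstacle, and the part requiring genuine care, is verifying that this $k$-invariant survives the passage from $K$-theory to its spectrum of units and is not killed by the interaction with the $K(\R)$-summand; I would handle this by the same kind of diagram-chase with fiber sequences and the $\fiblog$/$\G_m$ machinery used throughout Sections 3–4, now applied to $S$ rather than to $\Z$ or $\F_\ell$, localized at $p=2$ where the Lichtenbaum–Quillen comparison for $K(\R)$, $K(\C)$ lets us import the topological computation.
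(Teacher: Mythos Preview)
Your instinct to detect the obstruction by comparison with topological $K$-theory is right, but the proposed execution has a real gap, and the paper's route is both different and much shorter.

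The gap is your assertion that the $k$-invariants of $K(S)[1,\infty)$ and $\slone K(S)$ agree because ``on homotopy in degrees $1,2$ the map $K(S)\to\glone K(S)$ is an isomorphism''. There is no such map of spectra; the identification $\pi_i\glone R\simeq\pi_i R$ for $i>0$ comes from translating the $\{0\}$-component of $\Omega^\infty R$ to the $\{1\}$-component, which is an equivalence of \emph{spaces} but not of infinite loop spaces---the former carries the additive and the latter the multiplicative $E_\infty$-structure. Already for $R=\KO$ the first additive $k$-invariant $\Sq^2\colon\Sigma\Z/2\to\Sigma^3\Z/2$ is nonzero, yet $\slone\KO$ splits off $\Sigma\Z/2$ (since $\glone\KO$ splits at $1$). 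So computing a nonzero $\Sq^2$ in $\cotr$ tells you nothing about $\glone K(S)$; the ``main obstacle'' you flag at the end is not a technicality to be handled by a diagram chase but is the entire content of the theorem. Your fallback via $\fiblog$ would require a Lichtenbaum--Quillen input for $S$ itself (not just for $\R$ and $\C$ separately) together with a computation of the logarithmic fiber of $\LKone K(S)$---substantially more than the sketch suggests.

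The paper sidesteps all of this with one geometric observation: the real points of $\Spec(S)$ form a circle, giving a \emph{ring} map $K(S)\to\KO(S^1)$ which is an isomorphism on $\pi_1$ (the rotation matrix of \Cref{pioneKS} maps to the tautological loop $S^1=SO(2)\hookrightarrow O$). Because this is a ring map, a hypothetical section of $\glone K(S)\to\glone K(S)[0,1]$ pushes forward to a section of $\slone\KO(S^1)\to\Sigma(\Z/2)^2$. Since $\glone\KO(S^1)$ is the connective cover of $\glone\KO\oplus\Sigma^{-1}\glone\KO$, projecting to the second summand forces $\glone\KO[2,\infty)\simeq\Sigma^2\Z/2\oplus\glone\KO[3,\infty)$; that in turn would make $\pi_2\G_m\KO_2$ contain a $\Z/2$, contradicting \Cref{ahrz}. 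The moral: a ring map transports sections of $\glone(-)\to\glone(-)[0,1]$ directly, so one never has to compare additive and multiplicative $k$-invariants at all.
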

\begin{proof}
    Note that the $\R$-points of $\Spec(S)$ are topologically equivalent to $S^1$. Hence there is a ring map $K(S)\rightarrow KO(S^1)$. Moreover, observe that this map induces an isomorphism on $\pi_1$. Indeed, $KO(S^1)\simeq KO\oplus \Sigma^{-1}KO$ is generated by the two maps $S^1\rightarrow O$ given by the constant map at $-1\in O$, and the inclusion of groups $S^1=SO(2)\rightarrow O(2)\rightarrow O$. The former is detected by $\eta$ under the composite $K(\R)\rightarrow K(S)\rightarrow KO(S^1)$. The latter is detected by the other generator of $\pi_1K(S)$ in the presentation of \Cref{pioneKS}, as is evident from the matrix presentation provided there.

    Now suppose $\glone K(S)$ were split at 1. Then \[\glone K(S)[1,\infty)\simeq \Sigma (\Z/2)^2\oplus \glone K(S)[2,\infty).\]
    As the comparison map to $\KO(S^1)$ is an isomorphism on $\pi_1$, this would force a splitting
    \[\glone \KO(S^1)[1,\infty)\simeq \Sigma(\Z/2)^2\oplus \glone \KO(S^1)[2,\infty).\]
    However, $\glone \KO(S^1)$ is the connective cover of $(\glone \KO)(S^1)=\glone \KO\oplus \Sigma^{-1}\glone KO$. By considering the second summand, we see that a splitting as in the last display would force a splitting
    \[\glone \KO[2,\infty)\simeq \Sigma^2\Z/2\oplus\glone \KO[3,\infty).\]
    However, such a splitting does not exist. Indeed, such a splitting would immediately imply that $\pi_2\G_m\KO_2$ contains $\Z/2$ as a summand, which violates \Cref{ahrz}.

\end{proof}

\bibliographystyle{plain}

\bibliography{references}

\begin{thebibliography}{10}

\bibitem{ahr}
M.~Ando, M.~Hopkins, and C.~Rezk.
\newblock Multiplicative orientations of $\text{KO}$-theory and of the spectrum of topological modular forms.
\newblock 2010.
\newblock \url{https://rezk.web.illinois.edu/koandtmf.pdf}.

\bibitem{blm}
J.~Beardsley, K.~Luecke, and J.~Morava.
\newblock Brauer-wall groups and truncated picard spectra of $\text{K}$-theory.
\newblock 2023.
\newblock \url{https://arxiv.org/abs/2306.10112}.

\bibitem{blumman1}
A.~Blumberg and M.~Mandell.
\newblock The homotopy groups of the algebraic $\text{K}$-theory of the sphere spectrum.
\newblock {\em Geom. Topol.}, 23:101--134, 2019.

\bibitem{blumman2}
A.~Blumberg and M.~Mandell.
\newblock The eigensplitting of the fiber of the cyclotomic trace for the sphere spectrum.
\newblock {\em Trans. Amer. Math. Soc.}, 376, 2023.

\bibitem{Telecyclo}
S.~Carmeli, T.~Schlank, and L.~Yanovski.
\newblock Chromatic cyclotomic extensions.
\newblock 2021.
\newblock \url{https://arxiv.org/abs/2103.02471}.

\bibitem{devhop}
E.~Devinatz and M.~Hopkins.
\newblock Homotopy fixed point spectra for closed subgroups of the morava stabilizer groups.
\newblock {\em Topology}, 43:1--47, 2004.

\bibitem{dwyerfried}
W.~Dwyer and E.~Friedlander.
\newblock Algebraic and etale $\text{K}$-theory.
\newblock {\em Trans. Amer. Math. Soc.}, 272, 1985.

\bibitem{dwyermitch}
W.~Dwyer and S.~Mitchell.
\newblock On the $\text{K}$-theory spectrum of a ring of algebraic integers.
\newblock {\em $\text{K}$-theory}, 14, 1998.

\bibitem{hodgkin}
L~Hodgkin.
\newblock The $\text{K}$-theory of some well-known spaces.
\newblock {\em Topology}, 11, 1972.

\bibitem{quillen}
D.~Quillen.
\newblock On the cohomology and $\text{K}$-theory of the general linear groups over a finite field.
\newblock {\em Annals of Mathematics}, 7, 1972.

\bibitem{rezk}
C.~Rezk.
\newblock The units of a ring spectrum and a logarithmic cohomology operation.
\newblock {\em J. Amer. Math. Soc.}, 16, 2006.

\bibitem{rognesweibel}
J.~Rognes and C.~Weibel.
\newblock Two-primary algebraic $\text{K}$-theory of rings of integers in number fields.
\newblock {\em J. Amer. Math. Soc.}, 13, 2000.

\bibitem{swan1985k}
Richard~G Swan.
\newblock K-theory of quadric hypersurfaces.
\newblock {\em Annals of Mathematics}, 122(1):113--153, 1985.

\bibitem{thomason}
R.~Thomason.
\newblock Algebraic $\text{K}$-theory and etale cohomology.
\newblock {\em Ann. Sci. École Norm. Sup.}, 18, 1985.

\bibitem{thomasontrobaugh}
R.~Thomason and T.~Trobaugh.
\newblock Higher algebraic k-theory of schemes and of derived categories.
\newblock {\em Progr. Math.}, 88, 1990.

\bibitem{voevod}
V.~Voevodsky.
\newblock On motivic cohomology with $\mathbb{Z}/l$-coefficients.
\newblock {\em Ann. of Math.}, 174, 2011.

\bibitem{K-book}
Charles~A Weibel.
\newblock {\em The $ K $-book: An Introduction to Algebraic $ K $-theory}, volume 145.
\newblock American Mathematical Soc., 2013.

\end{thebibliography}

\end{document}